\newtheorem{thm}{Theorem}[section]
\newtheorem*{thm*}{Theorem}
\newtheorem{lem}[thm]{Lemma}
\newtheorem{cor}[thm]{Corollary}
\newtheorem{prop}[thm]{Proposition}
\theoremstyle{definition} % definition style
\newtheorem{defn}[thm]{Definition}
\newtheorem{ex}[thm]{Example}
\theoremstyle{remark} % remark style
\newtheorem{rem}[thm]{Remark}
\numberwithin{equation}{section}
\newcommand{\secref}[1]{Section~\textup{\ref{#1}}}
\newcommand{\thmref}[1]{Theorem~\textup{\ref{#1}}}
\newcommand{\lemref}[1]{Lemma~\textup{\ref{#1}}}
\newcommand{\propref}[1]{Proposition~\textup{\ref{#1}}}
\newcommand{\defnref}[1]{Definition~\textup{\ref{#1}}}
\newcommand{\exref}[1]{Example~\textup{\ref{#1}}}
\newcommand{\remref}[1]{Remark~\textup{\ref{#1}}}
\newcommand{\KK}{\mathcal K}
\newcommand{\FF}{\mathcal F}
\newcommand{\LL}{\mathcal L}
\newcommand{\RR}{\mathcal R}
\newcommand{\TT}{\mathcal T}
\renewcommand{\SS}{\mathcal S}
\newcommand{\C}{\mathbb C}
\newcommand{\variso}{\overset{\simeq}{\longrightarrow}}
\renewcommand{\bar}{\overline}
\newcommand{\what}{\widehat}
\newcommand{\wilde}{\widetilde}
\newcommand{\inv}{^{-1}}
\newcommand{\<}{\langle}
\renewcommand{\>}{\rangle}
\newcommand{\smtx}[1]{\left(\begin{smallmatrix} #1
\end{smallmatrix}\right)}
\newcommand{\ann}{^\perp}
\newcommand{\pann}{{}\ann}
\renewcommand{\:}{\colon}
\newcommand{\cotimes}{\mathrel{\sharp}}
\renewcommand{\)}{\textup)}
\newcommand{\rt}{\textup{rt}}
\newcommand{\id}{\text{\textup{id}}}
\DeclareMathOperator{\ad}{Ad}
\DeclareMathOperator*{\spn}{span}
\DeclareMathOperator*{\clspn}{\overline{\spn}}
\DeclareMathOperator{\ind}{Ind}
\DeclareMathOperator{\dashind}{\!-Ind}
\DeclareMathOperator{\glb}{glb}
\newcommand{\midtext}[1]{\quad\text{#1}\quad}
\newcommand{\righttext}[1]{\quad\text{#1 }}
\DeclareMathOperator{\cp}{CP}
\begin{document}
\title{Coaction functors, II}
\author[Kaliszewski]{S. Kaliszewski}
\address{School of Mathematical and Statistical Sciences
\\Arizona State University
\\Tempe, Arizona 85287}
\email{kaliszewski@asu.edu}
\author[Landstad]{Magnus~B. Landstad}
\address{Department of Mathematical Sciences\\
Norwegian University of Science and Technology\\
NO-7491 Trondheim, Norway}
\email{magnusla@math.ntnu.no}
\author[Quigg]{John Quigg}
\address{School of Mathematical and Statistical Sciences
\\Arizona State University
\\Tempe, Arizona 85287}
\email{quigg@asu.edu}

\date{\today}
%\date{Revised draft, \today}

\subjclass[2000]{Primary  46L55; Secondary 46M15}
\keywords{
Crossed product,
action,
coaction, 
Fourier-Stieltjes algebra,
exact sequence,
Morita compatible}

\begin{abstract}
In further study of the application of crossed-product functors to the Baum-Connes Conjecture,
Buss, Echterhoff, and Willett
introduced various other properties that crossed-product functors may have.
Here we introduce and study analogues of these properties for coaction functors, making sure that the properties are preserved when the coaction functors are composed with the full crossed product to make a crossed-product functor.
The new properties for coaction functors studied here are
functoriality for generalized homomorphisms
and the correspondence property.
We particularly study the connections with the ideal property.
The study of functoriality for generalized homomorphisms
requires a detailed development of the Fischer construction of maximalization of coactions with regard to possibly degenerate homomorphisms into multiplier algebras.
We verify that all ``KLQ'' functors arising from large ideals of the Fourier-Stieltjes algebra $B(G)$ have all the properties we study,
and at the opposite extreme we give an example of a coaction functor having none of the properties.
\end{abstract}
\maketitle

\section{Introduction}\label{intro}

As part of their study of the Baum-Connes Conjecture, \cite{bgwexact} considered \emph{exotic crossed products} between the full and reduced crossed products of a $C^*$-dynamical system,
and a crucial feature was that the construction be \emph{functorial}
for equivariant homomorphisms.
In \cite{klqfunctor} we introduced a two-step construction of crossed-product functors:
first form the full crossed product, then apply a \emph{coaction functor}.
Although this recipe does not give all crossed-product functors, there is some evidence that it might produce the functors that are most important for the program of \cite{bgwexact}.

In \cite{bgwexact}, the applications to the Baum-Connes Conjecture lead to the desire that the crossed-product functors be \emph{exact} and \emph{Morita compatible}, and it was proved that there is a smallest (for a suitable partial ordering) crossed product with these properties.
The idea is that every family of crossed-product functors has a greatest lower bound, and that exactness and Morita compatibility are preserved by greatest lower bounds.
In \cite{klqfunctor} we proved analogues of these facts for coaction functors.

In further study of the application of crossed-product functors to the Baum-Connes Conjecture,
\cite{bew} studied various other properties that crossed-product functors may have.
This motivated us to investigate in the current paper the analogous properties of coaction functors.

There is a subtlety regarding the appropriate choices of categories.
To study short exact sequences, the morphisms should be homomorphisms between the $C^*$-algebras themselves, and we call the resulting categories \emph{classical}.
On the other hand, some of the properties considered in \cite{bew} require homomorphisms into multiplier algebras.
Most of the literature on noncommutative $C^*$-crossed-product duality
uses \emph{nondegenerate categories},
where the morphisms are
nondegenerate homomorphisms into multiplier algebras;
the nondegeneracy guarantees that the maps can be composed.
On the other hand,
for some of the properties studied in \cite{bew} it is actually important to allow \emph{possibly degenerate} homomorphisms into multiplier algebras.
Of course this is problematic in terms of composing morphisms,
but nevertheless \cite{bew} introduced a reasonable notation of
\emph{functoriality for generalized homomorphisms},
involving such possibly degenerate homomorphisms.
In this paper we chose to develop the theory along three parallel tracks:
first we prove what we can in the context of generalized homomorphisms,
then we specialize to the classical and the nondegenerate categories.
However, our main interest is in the classical categories,
and for much of this paper the classical case will be our default, with occasional mention of nondegenerate categories.

Nondegenerate equivariant categories have been fairly well-studied,
but (perhaps unexpectedly) the classical counterparts have not,
especially in noncommutative crossed-product duality.
In \cite{klqfunctor} we began to fill in some of these gaps in the theory of classical categories,
and here we will continue this,
to prepare the way for our study of analogues for coaction functors of some of the properties introduced in \cite{bew}.
In \cite{klqfunctor} we gave a brief indication of how maximalization of coactions is a functor on the classical category of coactions, which we make more precise in \secref{max gen sec}.

We begin in \secref{prelim} by recording a few of our conventions for coactions and actions.
We also discuss the distinction between \emph{nondegenerate} and \emph{classical} categories of $C^*$-algebras with extra structure. For the study of exactness of coaction functors, the classical categories are appropriate, so we focus upon them in this paper.
Coaction functors involve maximalization of coactions,
and we outline Fischer's construction of maximalization as a composition of three simpler functors.
We finish \secref{prelim} with a short discussion of coaction functors, taken from \cite{klqfunctor} and \cite{klqexact}.
In particular, we recall a few properties that coaction functors may have:
\emph{exactness},
\emph{Morita compatibility},
and
the \emph{ideal property}.
The first of these occupies a central position in the application of coaction functors to the crossed-product functors of \cite{bgwexact},
while the second and third are analogues of properties of action-crossed-product functors discussed in \cite{bew}.
In \propref{glb exact or morita} we record a more precise statement of a result in \cite{klqfunctor} regarding greatest lower bounds of exact or Morita compatible coaction functors.
The whole point of coaction functors is that they give a large
(albeit not exhaustive)
source of crossed-product functors in the sense of \cite{bgwexact}.
There are numerous open problems regarding the relationship between these two types of functors,
and in \secref{prelim} we mention one of these, involving greatest lower bounds.
We also recall another type of coaction functor:
\emph{decreasing},
which include those coaction functors arising from \emph{large ideals} of the Fourier-Stieltjes algebra $B(G)$;
the associated crossed-product functors for actions have been referred to as
``KLQ functors'' \cite{bew, bew2} or ``KLQ crossed products'' \cite{bgwexact}.

In \secref{max gen sec} we discuss how to maximalize possibly degenerate equivariant homomorphisms into multiplier algebras, with an eye toward developing an analogue for coaction functors of the \emph{functoriality for generalized homomorphisms} discussed in \cite{bew}.
This requires consideration of generalized homomorphisms for each of the three steps in the Fischer construction.
As a side benefit, we close \secref{max gen sec} by remarking how \thmref{max gen} gives a more precise justification
than that one in \cite[Section~3]{klqfunctor}
that maximalization is a functor on the classical category of coactions.

In \secref{gen hom} we introduce an analogue for coaction functors of the property
called \emph{functoriality for generalized homomorphisms}
in \cite{bew}.
Here the term ``generalized homomorphism'' refers to a possibly degenerate homomorphism $\phi\:A\to M(B)$; these are somewhat delicate, and some care must be exercised in dealing with them.
We prove some analogues for coaction functors of results of \cite{bew};
for example, coaction functors that are functorial for generalized homomorphisms
in the sense of \defnref{gen def}
satisfy a limited version of the usual composability aspect of actual functors,
and
every functor arising from a large ideal of $B(G)$ has this generalized functoriality property.
We also give a further discussion of the ideal property,
in particular proving that it is implied by functoriality for generalized homomorphisms.
This is weaker than the corresponding result of \cite{bew},
namely that for crossed-product functors these two properties are equivalent.
We also prove that both the ideal property and functoriality for generalized homomorphisms
are inherited by greatest lower bounds.

In \secref{cor ppy sec} we introduce the \emph{correspondence property} for coaction functors,
which is
an analogue of the \emph{correspondence crossed-product functors} of \cite{bew}.
This is much stronger than Morita compatibility,
and we need to do a bit of work to develop it.
As a side benefit of this work, we prove that if a coaction functor is Morita compatible then the associated crossed-product functor for actions is strongly Morita compatible in the sense of \cite{bew},
and we also prove a technical lemma showing that, in the presence of the ideal property, the test for Morita compatibility can be relaxed somewhat.
We prove that a coaction functor has the correspondence property if and only if it is both Morita compatible and functorial for generalized homomorphisms,
which is an analogue of a similar equivalence for crossed-product functors in \cite{bew}.
It follows that if a coaction functor has the correspondence property then the associated crossed-product functor for actions is a correspondence crossed-product functor in the sense of \cite{bew}.
Among the consequences, we deduce that
every coaction functor arising from a large ideal of $B(G)$ has the correspondence property,
and that the correspondence property is inherited by greatest lower bounds,
so that in particular there is a smallest coaction functor with the correspondence property.
Also, a result of \cite{bew} showing that the output of a correspondence crossed-product functor carries a quotient of the dual coaction on the full crossed product strengthens our belief that the most important crossed-product functors are those arising from coaction functors.

\section{Preliminaries}\label{prelim}

Throughout, $G$ will be a locally compact group,
$A,B,C,D$ will be $C^*$-algebras,
actions of $G$ are denoted by letters such as $\alpha,\beta,\gamma$,
and coactions of $G$ by letters such as $\delta,\epsilon,\zeta$.
Throughout, we assume that $G$ is second countable,
so that the Hilbert space $L^2(G)$ will be separable;
second countability of $G$
is needed for the use of Fischer's result, and in
that proof separability of $L^2(G)$ is essential.
We refer to \cite[Appendix~A]{enchilada} and \cite{maximal} for conventions regarding actions and coactions,
and to \cite[Chapters~1--2]{enchilada} for $C^*$-correspondences\footnote{called \emph{right-Hilbert bimodules} in \cite{enchilada}} and imprimitivity bimodules.

We write $A\rtimes_\alpha G$ for the crossed product of an action $(A,\alpha)$,
and $(i_A,i_G)$ for the universal covariant homomorphism from $(A,G)$ to the multiplier algebra $M(A\rtimes_\alpha G)$,
occasionally writing $i_G^\alpha$ to avoid ambiguity.
We write $\what\alpha$ for the dual coaction.

We write $A\rtimes_\delta G$ for the crossed product of a coaction $(A,\delta)$,
and $(j_A,j_G)$ for the universal covariant homomorphism from $(A,C_0(G))$ to $M(A\rtimes_\delta G)$,
occasionally writing $j_G^\delta$ to avoid ambiguity.
We write $\what\delta$ for the dual action.

Given a coaction $(A,\delta)$, we find it convenient to use the associated $B(G)$-module structure
given by
\[
f\cdot a=(\id\otimes f)\circ\delta(a)\righttext{for}f\in B(G),a\in A,
\]
and in \cite[Appendix~A]{klqfunctor} we recorded a few properties.
We will need the following mild strengthening of \cite[Proposition~A.1]{klqfunctor}:

\begin{prop}\label{equivariant}
Let $(A,\delta)$ and $(B,\epsilon)$ be coactions of $G$, and let $\phi:A\to M(B)$ be a homomorphism. Then $\phi$ is $\delta-\epsilon$ equivariant if and only if
it is a module map, i.e., 
\[
\phi(f\cdot a)=f\cdot \phi(a)\righttext{for all}f\in B(G),a\in A.
\]
\end{prop}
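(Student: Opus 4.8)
The plan is to prove both implications by applying the slice maps $\id\otimes f$, $f\in B(G)$, to the equivariance identity $\epsilon\circ\phi=(\phi\otimes\id)\circ\delta$, keeping the computations inside the $C^*$-algebras $A\otimes C^*(G)$ and $B\otimes C^*(G)$ as much as possible so as to handle the possible degeneracy of $\phi$; the nondegenerate case is \cite[Proposition~A.1]{klqfunctor}, so the only new point is to avoid assuming $\phi$ nondegenerate. I will use three routine facts. First, on $A\otimes C^*(G)$ one has the identity $(\id_B\otimes f)\circ(\phi\otimes\id)=\phi\circ(\id_A\otimes f)$, since both sides send $a\otimes c\mapsto f(c)\phi(a)$ and therefore agree on the $C^*$-algebra by linearity and norm-continuity. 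Second, for $z\in C^*(G)$ and $y\in M(B\otimes C^*(G))$ one has $(\id\otimes f)\bigl(y(1\otimes z)\bigr)=(\id\otimes f_z)(y)$, where $f_z\in B(G)$ is defined by $f_z(c)=f(cz)$; this is checked on elementary tensors and extended using the strict continuity of $\id\otimes f$ (valid because $f$ is a linear combination of states). Third, the slice maps $\{\id\otimes f:f\in B(G)\}$ separate the points of $M(B\otimes C^*(G))$: if $(\id\otimes f)(m)=0$ for all $f$, then for $b\in B$ and $c\in C^*(G)$ one computes $(\id\otimes f)\bigl((b\otimes c)m\bigr)=b\,(\id\otimes g)(m)=0$, where $g(c')=f(cc')$, while $(b\otimes c)m\in B\otimes C^*(G)$; hence $(b\otimes c)m=0$ for all $b,c$, so $m=0$.

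For the forward implication, from $\epsilon(\phi(a))=(\phi\otimes\id)(\delta(a))$ I would multiply on the right by $1\otimes z$: because $\delta(a)(1\otimes z)$ lies in $A\otimes C^*(G)$ by the coaction nondegeneracy condition, the right-hand side is $(\phi\otimes\id)$ of an honest element of $A\otimes C^*(G)$. Slicing by $\id\otimes f$ and invoking the first two facts (and the $B(G)$-module structure on $M(B)$ extending that on $B$, recorded in \cite[Appendix~A]{klqfunctor}) turns the identity into $f_z\cdot\phi(a)=\phi(f_z\cdot a)$ for all $f\in B(G)$, $z\in C^*(G)$, $a\in A$. Now $\{f_z : f\in B(G),\, z\in C^*(G)\}$ is norm-dense in $B(G)$, since $\|f_{e_i}-f\|\to0$ for an approximate unit $(e_i)$ of $C^*(G)$ — reduce to $f$ a state and apply the Cauchy--Schwarz inequality to $|f(c(e_i-1))|$, using $(e_i-1)^2\le 1-e_i$ and $f(e_i)\to1$ — and $g\mapsto g\cdot\phi(a)-\phi(g\cdot a)$ is norm-continuous, so the identity extends to every $g\in B(G)$. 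For the converse, assume $\phi(f\cdot a)=f\cdot\phi(a)$ for all $f,a$ and fix $a$; since $1\otimes C^*(G)$ acts nondegenerately on $B\otimes C^*(G)$ from the right, it suffices to show $\epsilon(\phi(a))(1\otimes z)=(\phi\otimes\id)\bigl(\delta(a)(1\otimes z)\bigr)$ for every $z\in C^*(G)$, and applying $\id\otimes f$ to both sides and running the forward computation in reverse gives $f_z\cdot\phi(a)$ on the left and $\phi(f_z\cdot a)$ on the right, which agree by hypothesis; by the third fact this forces the two sides to be equal.

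I expect the main obstacle to be precisely the degeneracy of $\phi$: such a $\phi$ is not strictly continuous and does not extend to $M(A)$, so one cannot slice the equivariance identity at the multiplier level and simply push it through $\phi$. Multiplying by $1\otimes z$ first confines the delicate step to $A\otimes C^*(G)$, where $\phi\otimes\id$ is an honest $*$-homomorphism; the cost is that the forward direction then only yields the module-map identity on a dense subset of $B(G)$, and one must recover all of $B(G)$, which is exactly what the norm density of $\{f_z\}$ (ultimately the norm convergence $\|f_{e_i}-f\|\to0$) provides. The remaining steps are routine bookkeeping with slice maps, and the standing second-countability hypothesis plays no role here.
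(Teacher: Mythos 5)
Your proof is correct, and it reaches the same characterization by the same core mechanism as the paper --- slicing the equivariance identity by elements of $B(G)$ and using that these slice maps separate points of $M(B\otimes C^*(G))$ --- but it resolves the one genuinely new difficulty (the possible degeneracy of $\phi$) by a different device. The paper's proof is a one-line patch of \cite[Proposition~A.1]{klqfunctor}: it replaces $\phi\otimes\id$ by the canonical extension $\bar{\phi\otimes\id}\:\wilde M(A\otimes C^*(G))\to M(B\otimes C^*(G))$ supplied by \cite[Proposition~A.6]{enchilada}, which can be applied directly to $\delta(a)$, so the original multiline computation goes through verbatim. You instead avoid extending $\phi\otimes\id$ beyond $A\otimes C^*(G)$ altogether: you cut down by $1\otimes z$ first, obtain the module identity only for the translated functionals $f_z$, and then recover all of $B(G)$ from the norm convergence $\|f_{e_i}-f\|\to 0$ (your Cauchy--Schwarz argument for this is sound, and the continuity of $g\mapsto g\cdot\phi(a)-\phi(g\cdot a)$ is clear). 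Both routes are valid; the paper's buys brevity by leaning on the $\wilde M$ machinery already in its toolkit, while yours is more self-contained and makes explicit exactly where the equivariance identity is being tested (against $1\otimes C^*(G)$), at the cost of the extra density lemma and the $f_z$ bookkeeping. The only point worth flagging is that your two directions implicitly take ``equivariance'' to mean the family of identities $\bar\epsilon(\phi(a))(1\otimes z)=(\phi\otimes\id)\bigl(\delta(a)(1\otimes z)\bigr)$ for all $z\in C^*(G)$; this is equivalent to the paper's formulation $\bar\epsilon\circ\phi=\bar{\phi\otimes\id}\circ\delta$ because the canonical extension satisfies $\bar{\phi\otimes\id}(m)(1\otimes z)=(\phi\otimes\id)\bigl(m(1\otimes z)\bigr)$ for $m\in\wilde M(A\otimes C^*(G))$ and because right multiplication by $1\otimes C^*(G)$ determines elements of $M(B\otimes C^*(G))$, but that half-sentence of reconciliation should be said.
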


\begin{proof}
As we mentioned in \cite[proof of Lemma~3.17]{klqexact}, the argument of \cite[Proposition~A.1]{klqfunctor} carries over, with the minor adjustment that in the second line of the multiline displayed computation 
the map $\phi\otimes\id$ must be replaced by the canonical extension
\[
\bar{\phi\otimes\id}\:\wilde M(A\otimes C^*(G))\to M(B\otimes C^*(G)),
\]
which exists by \cite[Proposition~A.6]{enchilada},
and where we recall the notation
\begin{align*}
&\wilde M(A\otimes C^*(G))
=\{m\in M(A\otimes C^*(G)):
\\&\hspace{.5in} m(1\otimes C^*(G))\cup (1\otimes C^*(G))m\subset A\otimes C^*(G)\}.
\qedhere
\end{align*}
\end{proof}

\subsection*{Classical and nondegenerate categories}

In all of our categories, the objects will be $C^*$-algebras, usually equipped with some extra structure, and the morphisms will be homomorphisms that preserve this extra structure in some sense.
We consider two main types of homomorphisms:
\emph{nondegenerate} homomorphisms $\phi\:A\to M(B)$,
and what we call
\emph{classical} homomorphisms $\phi\:A\to B$,
and these give rise to what we call \emph{nondegenerate} and \emph{classical} categories, respectively.
We are concerned mainly with the classical case,
but occasionally we will refer to the nondegenerate case,
and sometimes we will develop the two in parallel.
We also need to consider what Buss, Echterhoff, and Willett call \emph{generalized homomorphisms} $\phi\:A\to M(B)$, which are allowed to be degenerate.
Perhaps surprisingly, in the noncommutative crossed-product duality literature,
the nondegenerate categories are used almost exclusively;
here we will devote more attention to developing the tools we need for the classical categories.

Warning: in this paper we will slightly modify some of the notation from \cite{klqfunctor}:
given a coaction $(A,\delta)$,
recall from \cite{maximal} that $\delta$ is called \emph{maximal} if the canonical map $\Phi\:A\rtimes_\delta G\rtimes_{\what\delta} G\to A\otimes \KK(L^2(G))$ is an isomorphism,
and that an arbitrary $(A,\delta)$
has a \emph{maximalization},
which is a maximal coaction $(A^m,\delta^m)$ and a $\delta^m-\delta$ equivariant surjection,
which 
we will write
as
$\psi_A\:A^m\to A$, rather than $q^m_A$,
having the property that
$\psi_A\rtimes G\:A^m\rtimes_{\delta^m} G\to A\rtimes_\delta G$
is an isomorphism.
On the nondegenerate category of coactions,
Fischer proves that $\psi$ gives a natural transformation from maximalization to the identity functor;
in \cite{klqfunctor} we stated this for the classical category,
and we will make this more precise in \thmref{max gen}.

On the other hand, we will use the same notation as in \cite{klqfunctor} for
the surjections
$\Lambda_A\:A\to A^n$ giving a natural transformation
from the identity functor to
the normalization functor
$(A,\delta)\mapsto (A^n,\delta^n)$
(for both the classical and the nondegenerate categories).

Given a coaction $(A,\delta)$,
we call 
a $C^*$-subalgebra $B$ of $M(A)$ \emph{strongly $\delta$-invariant} if
\[
\clspn\{\delta(B)(1\otimes C^*(G))\}=B\otimes C^*(G),
\]
in which case
by \cite[Lemma~1.6]{fullred}
$\delta$ restricts to a coaction $\delta_B$ on $B$.
If $I$ is a strongly $\delta$-invariant ideal of $A$, then
by \cite[Propositions~2.1 and 2.2, Theorem~2.3]{nil:full}
(see also \cite[Proposition~4.8]{lprs}),
$I\rtimes_{\delta_I} G$ can be naturally identified with an ideal of $A\rtimes_\delta G$,
and $\delta$ descends to a coaction $\delta^I$ on $A/I$
in such a manner that
\[
0
\to I\rtimes_{\delta_I} G
\to A\rtimes_\delta G
\to (A/I)\rtimes_{\delta^I} G
\to 0
\]
is a short exact sequence in the classical category of coactions.

\begin{rem}
Given a coaction $(A,\delta)$ and an ideal $I$ of $A$,
the existence of a coaction $\delta^I$ on the quotient $A/I$
such that the quotient map $A\to A/I$ is $\delta-\delta^I$ equivariant
is a weaker condition than the above strong invariance,
and when it is satisfied we say that $\delta$ descends to a coaction on $A/I$.
\end{rem}

\subsection*{The Fischer construction}
For convenient reference we record the following rough outline of
Fischer's construction
of the maximalization of a coaction $(A,\delta)$
\cite[Section~6]{fischer}
(see also \cite{koqstable} and \cite{koqmaximal}).
First of all, letting $\KK$ denote the algebra of compact operators on a separable infinite-dimensional Hilbert space, a \emph{$\KK$-algebra}
is a pair $(A,\iota)$,
where $A$ is a $C^*$-algebra
and $\iota\:\KK\to M(A)$ is a nondegenerate homomorphism.
Given a $\KK$-algebra $(A,\iota)$,
the \emph{$A$-relative commutant of $\KK$} is
\[
C(A,\iota):=\{m\in M(A):m\iota(k)=\iota(k)m\in A\text{ for all }k\in\KK\}.
\]
The \emph{canonical isomorphism}
$\theta_A\:C(A,\iota)\otimes\KK\variso A$
is determined by
$\theta_A(a\otimes k)=a\iota(k)$ for $a\in A,k\in\KK$
(see \cite[Remark~3.1]{fischer} and \cite[Proposition~3.4]{koqstable}).
If $(B,\jmath)$ is another $\KK$-algebra and $\phi\:A\to M(B)$ is a nondegenerate homomorphism such that $\phi\circ\iota=\jmath$, then there is a unique nondegenerate homomorphism
$C(\phi)\:C(A,\iota)\to M(C(B,\jmath))$ making the diagram
\[
\xymatrix@C+30pt{
A \ar[r]^-\phi
&M(B)
\\
C(A,\iota)\otimes\KK \ar[u]^{\theta_A} \ar[r]_-{C(\phi)\otimes\id}
&M(C(B,\jmath)\otimes\KK) \ar[u]_{\theta_B}
}
\]
commute.

A \emph{$\KK$-coaction} is a triple $(A,\delta,\iota)$,
where $(A,\delta)$ is a coaction and $(A,\iota)$ is a $\KK$-algebra
such that $\delta\circ\iota=\iota\otimes 1$.
If $(A,\delta,\iota)$ is a $\KK$-coaction, then
the relative commutant $C(A,\iota)$ is strongly $\delta$-invariant,
and the restricted coaction $C(\delta)=\delta|_{C(A,\iota)}$
is maximal if $\delta$ is,
and $\theta_A$ is
$(C(\delta)\otimes_*\id)-\delta$ equivariant \cite[Lemma~3.2]{koqmaximal}.

An \emph{equivariant action} is a triple $(A,\alpha,\mu)$,
where $(A,\alpha)$ is an action of $G$ and $\mu\:C_0(G)\to M(A)$ is a nondegenerate $\rt-\alpha$ equivariant homomorphism,
and
where
in turn
$\rt$ is the action of $G$ on $C_0(G)$ given by $\rt_s(f)(t)=f(ts)$.

A \emph{cocycle} for a coaction $(A,\delta)$ is a unitary element $U\in M(A\otimes C^*(G))$
such that
$(\id\otimes \delta_G)(U)=(U\otimes 1)(\delta\otimes\id)(U)$
and
$\ad U\circ\delta(A)(1\otimes C^*(G))\subset A\otimes C^*(G)$.
Then $\ad U\circ\delta$ is a coaction on $A$,
and is Morita equivalent to $\delta$,
and hence is maximal if and only if $\delta$ is.
If $U$ is a $\delta$-cocycle,
$(B,\epsilon)$ is another coaction,
and $\phi\:A\to M(B)$ is a nondegenerate
$\delta-\epsilon$ equivariant homomorphism,
then
$(\phi\otimes\id)(U)$ is an $\epsilon$-cocycle and
$\phi$ is $\ad U\circ\delta-\ad(\phi\otimes\id)(U)\circ\epsilon$ equivariant.

Given an equivariant action $(A,\alpha,\mu)$,
the unitary element
\[
V_A:=((i_A\circ\mu)\otimes\id)(w_G)
\]
is an $\what\alpha$-cocycle,
and we write $\wilde\alpha=\ad V_A\circ\what\alpha$.
Then
$(A\rtimes_\alpha G,\wilde\alpha,\mu\rtimes G)$ is a maximal $\KK$-coaction
\cite[Lemma~3.1]{koqmaximal}.

Now, if $(A,\delta)$ is a coaction,
then $(A\rtimes_\delta G,\what\delta,j_G)$ is an equivariant action,
so
$(A\rtimes_\delta G\rtimes_{\what\delta} G,\wilde{\what\delta},j_G\rtimes G)$
is a $\KK$-coaction,
and hence
\[
(A^m,\delta^m):=
\bigl(C(A\rtimes_\delta G\rtimes_{\what\delta} G,j_G\rtimes G),C(\wilde{\what\delta})\bigr)
\]
is a maximal coaction.
Letting
\[
\Phi_A\:A\rtimes_\delta G\rtimes_{\what\delta} G\to A\otimes\KK
\]
be the \emph{canonical surjection}, which is
$\wilde{\what\delta}-(\delta\otimes_*\id)$
equivariant,
Fischer proves that there is a unique $\delta^m-\delta$ equivariant surjective homomorphism $\psi_A\:A^m\to A$ such that the diagram
\[
\xymatrix{
&A\rtimes_\delta G\times_{\what\delta} G \ar[dr]^{\Phi_A}
\\
A^m\otimes\KK \ar[ur]^{\theta_{A\rtimes_\delta G\rtimes_{\what\delta} G}}
\ar[rr]_{\psi_A\otimes\id}
&&A\otimes\KK
}
\]
commutes,
and moreover $\psi_A\:(A^m,\delta^m)\to (A,\delta)$ is a maximalization of $(A,\delta)$.
Fischer goes on to prove that maximalization is a functor on the nondegenerate category of coactions, by showing that if $\phi\:A\to M(B)$ is a nondegenerate $\delta-\epsilon$ equivariant homomorphism
then there is a unique homomorphism
$\phi^m\:A^m\to M(B^m)$
making the diagram
\begin{equation*}
\xymatrix{
&A\rtimes_\delta G\rtimes_{\what\delta} G \ar[dr]^{\Phi_A} \ar'[d][dd]^{\phi\rtimes G\rtimes G}
\\
A^m\otimes\KK \ar[ur]^(.4){\theta_{A\rtimes_\delta G\rtimes_{\what\delta} G}}_\simeq
\ar[rr]_(.3){\psi_A\otimes\id} \ar[dd]_{\phi^m\otimes\id}
&&A\otimes\KK \ar[dd]^{\phi\otimes\id}
\\
&M(B\rtimes_\epsilon G\rtimes_{\what\epsilon} G) \ar[dr]^{\Phi_B}
\\
M(B^m\otimes\KK) \ar[ur]^\simeq_(.6){\theta_{B\rtimes_\epsilon G\rtimes_{\what\epsilon} G}}
\ar[rr]_-{\psi_B\otimes\id}
&&M(B\otimes\KK)
}
\end{equation*}
commute.
Consequently,
the diagram
\begin{equation*}
\xymatrix@C+20pt{
A^m \ar[r]^-{\phi^m} \ar[d]_{\psi_A}
&M(B^m) \ar[d]^{\psi_B}
\\
A \ar[r]_-\phi
&M(B)
}
\end{equation*}
also commutes, and
$\phi^m$
is 
nondegenerate and
$\delta^m-\epsilon^m$ equivariant.

\subsection*{Coaction functors}
A functor $\tau\:(A,\delta)\mapsto (A^\tau,\delta^\tau)$,
$\phi\mapsto \phi^\tau$
on the classical category of coactions is a \emph{coaction functor} if
it fits into a commutative diagram
\begin{equation}\label{co fn diag}
\xymatrix{
&(A^m,\delta^m) \ar[dl]_{\psi_A} \ar[dr]^{q^\tau_A}
\\
(A,\delta) \ar[dr]_{\Lambda_A}
&&
(A^\tau,\delta^\tau) \ar[dl]^{\Lambda^\tau_A}
\\
&(A^n,\delta^n)
}
\end{equation}
of surjective natural transformations.
In \cite[Lemma~4.3]{klqfunctor} we proved that the existence of the natural transformation $\Lambda^\tau$ is automatic, provided we insist that $\ker q^\tau_A\subset \ker \Lambda_A\circ\psi_A$.

We observed in \cite[Example~4.2]{klqfunctor} that
maximalization, normalization, and the identity functor are all coaction functors.

Given two coaction functors $\tau$ and $\sigma$,
we say $\sigma$ is \emph{smaller} than $\tau$, written $\sigma\le \tau$,
if there is a natural transformation $\Gamma^{\tau,\sigma}$ fitting into commutative diagrams
\[
\xymatrix{
&(A^m,\delta^m) \ar[dl]_{q_A^\tau} \ar[dr]^{q^\sigma_A}
\\
(A^\tau,\delta^\tau) \ar[dr]_{\Lambda^\tau_A} \ar[rr]^{\Gamma^{\tau,\sigma}_A}
&&
(A^\sigma,\delta^\sigma) \ar[dl]^{\Lambda^\sigma_A}
\\
&(A^n,\delta^n),
}
\]
in other words, $\ker q^\tau_A\subset \ker q^\sigma_A$.
In \cite[Theorem~4.9]{klqfunctor} we proved that every nonempty family $\TT$ of coaction functors has a greatest lower bound $\glb \TT$, characterized by
\[
\ker q^{\glb\TT}=\clspn_{\tau\in \TT}\ker q^\tau.
\]

A coaction functor $\tau$ is \emph{exact} \cite[Definition~4.10]{klqfunctor}
if for every short exact sequence
\[
\xymatrix{
0 \ar[r]
&(I,\gamma) \ar[r]^-\phi
&(A,\delta) \ar[r]^-\psi
&(B,\epsilon) \ar[r]
&0
}
\]
in the classical category of coactions the image
\[
\xymatrix{
0 \ar[r]
&(I^\tau,\gamma^\tau) \ar[r]^-{\phi^\tau}
&(A^\tau,\delta^\tau) \ar[r]^-{\psi^\tau}
&(B^\tau,\epsilon^\tau) \ar[r]
&0
}
\]
under $\tau$ is also exact.
Maximalization is exact \cite[Theorem~4.11]{klqfunctor}.

A coaction functor $\tau$ is \emph{Morita compatible} \cite[Definition~4.16]{klqfunctor}
if for every $(A,\delta)-(B,\epsilon)$ imprimitivity-bimodule coaction $(X,\zeta)$,
with associated $(A^m,\delta^m)-(B^m,\epsilon^m)$ imprimitivity-bimodule coaction $(X^m,\zeta^m)$,
the Rieffel correspondence of ideals satisfies
\[
\ker q^\tau_A=X^m\dashind \ker q^\tau_B,
\]
equivalently there are an $A^\tau-B^\tau$ imprimitivity bimodule $X^\tau$ and
a surjective $q^\tau_A-q^\tau_B$ compatible imprimitivity-bimodule homomorphism
$q^\tau_X\:X^m\to X^\tau$ \cite[Lemma~4.19]{klqfunctor}.
Trivially, maximalization is Morita compatible,
and routine linking-algebra techniques show that the identity functor is Morita compatible \cite[Lemma~4.21]{klqfunctor}.
In \cite[Theorem~4.22]{klqfunctor} we proved that the greatest lower bound of the family of all exact and Morita compatible coaction functors is itself exact and Morita compatible.
It is easy to check that the arguments can be used to prove the following more precise statement:

\begin{prop}\label{glb exact or morita}
Let $\TT$ be a nonempty family of coaction functors.
If every functor in $\TT$ is exact, then so is $\glb\TT$,
and if every functor in $\TT$ is Morita compatible then so is $\glb\TT$.
\end{prop}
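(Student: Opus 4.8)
The plan is to revisit the proof of \cite[Theorem~4.22]{klqfunctor} and to observe two things: it treats exactness and Morita compatibility by entirely separate arguments, and neither argument uses any property of the family beyond its being nonempty. Recall first from \cite[Theorem~4.9]{klqfunctor} that $\glb\TT$ is again a coaction functor, with
\[
\ker q^{\glb\TT}_A=\clspn_{\tau\in\TT}\ker q^\tau_A
\]
for every coaction $(A,\delta)$, where the right-hand side is just the closed sum of the closed ideals $\ker q^\tau_A$ of $A^m$. Thus for each assertion the task is to understand how this ideal behaves, and the key general fact is that, under the correspondence between closed ideals of a $C^*$-algebra and open subsets of its primitive ideal space, the operations we shall meet --- intersection with a fixed closed ideal, image under a fixed surjection, and Rieffel induction along a fixed imprimitivity bimodule --- all commute with arbitrary closed sums.

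For exactness, fix a short exact sequence $0\to(I,\gamma)\to(A,\delta)\to(B,\epsilon)\to 0$ in the classical category of coactions, with maps $\phi\:I\to A$ and $\psi\:A\to B$. Since maximalization is exact \cite[Theorem~4.11]{klqfunctor}, the sequence $0\to I^m\to A^m\to B^m\to 0$ is exact; identify $I^m$ with the corresponding ideal of $A^m$ and $B^m$ with $A^m/I^m$. As in the proof of \cite[Theorem~4.22]{klqfunctor}, for a coaction functor $\tau$ the image of this sequence under $\tau$ is exact if and only if
\[
\ker q^\tau_I=I^m\cap\ker q^\tau_A
\midtext{and}
\psi^m(\ker q^\tau_A)=\ker q^\tau_B
\]
(the inclusions $\ker q^\tau_I\subseteq I^m\cap\ker q^\tau_A$ and $\psi^m(\ker q^\tau_A)\subseteq\ker q^\tau_B$ always hold by naturality of $q^\tau$, while right-exactness is automatic since $q^\tau$ and $\psi^m$ are surjective; the first displayed relation is injectivity on the left and the second is exactness in the middle). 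By hypothesis these hold for every $\tau\in\TT$; taking closed sums over $\tau\in\TT$ and using that intersection with $I^m$ and the image under $\psi^m$ commute with closed sums, we obtain
\[
\ker q^{\glb\TT}_I=I^m\cap\ker q^{\glb\TT}_A
\midtext{and}
\psi^m\bigl(\ker q^{\glb\TT}_A\bigr)=\ker q^{\glb\TT}_B,
\]
so the image of the sequence under $\glb\TT$ is exact. Hence $\glb\TT$ is exact.

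For Morita compatibility, let $(X,\zeta)$ be an $(A,\delta)-(B,\epsilon)$ imprimitivity-bimodule coaction with associated $(A^m,\delta^m)-(B^m,\epsilon^m)$ imprimitivity-bimodule coaction $(X^m,\zeta^m)$. Rieffel induction along $X^m$ is an order isomorphism from the complete lattice of closed ideals of $B^m$ onto that of $A^m$, so it preserves arbitrary joins, i.e.\ commutes with closed sums. Since each $\tau\in\TT$ is Morita compatible, $\ker q^\tau_A=X^m\dashind\ker q^\tau_B$ for all $\tau\in\TT$, and therefore
\[
\ker q^{\glb\TT}_A=\clspn_{\tau\in\TT}\ker q^\tau_A=\clspn_{\tau\in\TT}\bigl(X^m\dashind\ker q^\tau_B\bigr)=X^m\dashind\Bigl(\clspn_{\tau\in\TT}\ker q^\tau_B\Bigr)=X^m\dashind\ker q^{\glb\TT}_B,
\]
which says exactly that $\glb\TT$ is Morita compatible. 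The only ingredients that are not pure bookkeeping are the three commutation-with-closed-sums facts; these are standard (the first two via the primitive-ideal-space picture, the third because the Rieffel correspondence is induced by a homeomorphism of primitive ideal spaces and hence is an order isomorphism of ideal lattices), so I expect no real obstacle --- the substance of the proposition is simply the observation that the argument of \cite[Theorem~4.22]{klqfunctor} never uses that $\TT$ is the family of \emph{all} exact and Morita compatible functors, and handles the two properties independently.
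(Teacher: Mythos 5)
Your proposal is correct and follows essentially the same route as the paper, which simply asserts that the arguments of \cite[Theorem~4.22]{klqfunctor} carry over verbatim to an arbitrary nonempty family: the exactness half reduces to the facts that $\clspn_{\tau\in\TT}\ker q^\tau$ commutes with intersection against the ideal $I^m$ and with the image under the surjection $\psi^m$, and the Morita half to the fact that Rieffel induction along $X^m$ preserves closed spans of ideals. Your reconstruction of those two separate arguments, and your observation that neither uses anything about $\TT$ beyond nonemptiness, is exactly the content the paper leaves to the reader.
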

In particular, there are both a smallest exact coaction functor
and a smallest Morita compatible coaction functor.

Every coaction functor $\tau$ determines a crossed-product functor 
$\cp^\tau$
on actions
by composing with the full-crossed-product functor
$(A,\alpha)\mapsto (A\rtimes_\alpha G,\what\alpha)$.
If $\tau$ is exact or Morita compatible then so is 
$\cp^\tau$,
and if $\tau\le \sigma$ then 
$\cp^\tau\le \cp^\sigma$.
However, if $\TT$ is a nonempty family of coaction functors,
and $\SS=\{\cp^\tau:\tau\in\TT\}$ is the associated family of crossed-product functors,
with respective greatest lower bounds $\glb\SS$ and $\glb\TT$,
then
\[
\cp^{\glb\TT}\le \glb\SS,
\]
but we do not know whether this is always an equality.
In particular (see \cite[Question~4.25]{klqfunctor},
we do not know whether the smallest exact and Morita compatible crossed-product functor is naturally isomorphic to the composition with full-crossed-product of the smallest exact and Morita compatible coaction functor.

A coaction functor $\tau$ is \emph{decreasing} if
there is a natural transformation $Q^\tau$
fitting into the embellishment
\[
\xymatrix{
&(A^m,\delta^m) \ar[dl]_{\psi_A} \ar[dr]^{q^\tau_A}
\\
(A,\delta) \ar[dr]_{\Lambda_A} \ar[rr]^{Q^\tau_A}
&&
(A^\tau,\delta^\tau) \ar[dl]^{\Lambda^\tau_A}
\\
&(A^n,\delta^n)
}
\]
of the diagram~\ref{co fn diag},
equivalently $\tau\le \id$ (the identity functor).
This property tends to simplify considerations of various properties of coaction functors,
mainly by replacing $q^\tau$ by $Q^\tau$.
For example, a decreasing coaction functor $\tau$ is Morita compatible if and only if whenever $(X,\zeta)$ is an $(A,\delta)-(B,\epsilon)$ imprimitivity-bimodule coaction,
there are an $A^\tau-B^\tau$ imprimitivity bimodule $X^\tau$
and a $Q^\tau_A-Q^\tau_B$ compatible imprimitivity-bimodule homomorphism
$Q^\tau_X\:X\to X^\tau$ \cite[Proposition~5.5]{klqfunctor}.

The most well-studied decreasing coaction functors are
determined by \emph{large ideals} of the Fourier-Stieltjes algebra $B(G)$,
i.e., nonzero $G$-invariant weak* closed ideals $E$ of $B(G)$.
The preannihilator $\pann E$ is an ideal of $C^*(G)$,
and, denoting the quotient map by
\[
q_E\:C^*(G)\to C^*_E(G):=C^*(G)/\pann E,
\]
for any coaction $(A,\delta)$
we let
\begin{align*}
A^E&=A/\ker \bigl((\id\otimes q_E)\circ\delta\bigr).
\end{align*}
Then $\delta$ descends to a coaction $\delta^E$ on the quotient $A^E$,
and the assignments $(A,\delta)\mapsto (A^E,\delta^E)$ determine a decreasing coaction functor $\tau_E$.
We write
\[
Q^E=Q^{\tau_E}\:A\to A^E.
\]

The maximalization functor is not decreasing, so is not of the form $\tau_E$ for any large ideal $E$.
Moreover, \cite[Example~3.16]{klqexact} gives an example of a decreasing coaction functor $\tau$ such that for every large ideal $E$ the restrictions of $\tau$ and $\tau_E$ to the subcategory of maximal coactions are not naturally isomorphic; in particular, $\tau$ is not itself of the form $\tau_E$.

We call the large ideal $E$ \emph{exact} if the coaction functor $\tau_E$ is exact.
It is quite frustrating that so far
we have 
few exact large ideals;
for arbitrary $G$
we only know of one exact large ideal, namely $B(G)$, and $\tau_{B(G)}$ is the identity functor.
If 
the group $G$ is exact,
then
it seems plausible
--- although we have not checked this --- that
$B_r(G)$ is also an exact large ideal,
and
would
obviously 
be
the smallest one.
The frustrating thing is that for arbitrary $G$ we do not know whether there is a smallest exact large ideal $E$.
On the other hand, for every large ideal $E$ the coaction functor $\tau_E$ is Morita compatible \cite[Proposition~6.10]{klqfunctor}.
We do not know whether the intersection of all exact large ideals is exact;
the best we can say for now is that the set of all exact large ideals is closed under finite intersections \cite[Theorem~3.2]{klqexact}.
In a similar vein, if $\FF$ is a collection of large ideals,
with intersection $F$,
we do not know whether $\tau_F$ is the greatest lower bound of $\{\tau_E:E\in \FF\}$.

A coaction functor $\tau$ has the \emph{ideal property} \cite[Definition~3.10]{klqexact}
if for every coaction $(A,\delta)$ and every strongly $\delta$-invariant ideal $I$ of $A$, letting $\iota\:I\hookrightarrow A$ denote the inclusion map, the induced map $\iota^\tau\:I^\tau\to A^\tau$ is injective.
For every large ideal $E$, the coaction $\tau_E$ has the ideal property \cite[Lemma~3.11]{klqexact}.
We do not know
an example of a decreasing coaction functor that is Morita compatible and does not have the ideal property (see \cite[Remark~3.12]{klqexact}).

\section{Maximalization of degenerate homomorphisms}\label{max gen sec}

Our main objects of study are coaction functors, which involve maximalization of coactions.
We will need to maximalize possibly degenerate homomorphisms.
Maximalization can be characterized by a universal property
(see \cite[Lemma~6.2]{fischer} for nondegenerate morphisms, and \cite{klqfunctor} for the classical case),
but this does not seem well-suited to handling possibly degenerate homomorphisms.
Instead, we rely upon the Fischer construction, which involves three steps:
first form the crossed product by the coaction,
then the crossed product by the dual action,
and finally \emph{destabilize}, which roughly means extract $A$ from $A\otimes\KK$.

Our strategy for maximalizing possibly degenerate homomorphisms
is to do it for each of the three steps in the Fischer construction,
then combine.
The steps are Lemmas~\ref{coact gen}, \ref{act gen}, and \ref{com gen},
which will be combined in \thmref{max gen}.

\begin{lem}\label{coact gen}
Let $(A,\delta)$ and $(B,\epsilon)$ be coactions,
and let $\phi\:A\to M(B)$ be a 
possibly degenerate
$\delta-\epsilon$ equivariant homomorphism.
Then there is a 
unique
homomorphism
\[
\phi\rtimes G\:A\rtimes_\delta G\to M(B\rtimes_\epsilon G)
\]
such that
\begin{equation}\label{phi rtimes delta}
\begin{split}
(\phi\rtimes G)\bigl(j_A(a)j_G^\delta(g)\bigr)&=j_B\circ\phi(a)j_G^\epsilon(g)
\\&\qquad\righttext{for all}a\in A,g\in C_c(G)\subset C^*(G).
\end{split}
\end{equation}
Moreover, $\phi\rtimes G$
is nondegenerate if $\phi$ is, and
is $\what\delta-\what\epsilon$ equivariant, and
if $\phi(A)\subset B$ then $(\phi\rtimes G)(A\rtimes_\delta G)\subset B\rtimes_\epsilon G$.
Finally,
given a third action $(C,\gamma)$
and a possibly degenerate $\epsilon-\gamma$ equivariant homomorphism
$\psi\:B\to M(C)$,
if 
either 
$\phi(A)\subset B$
or $\psi$ is nondegenerate
then
\[
(\psi\rtimes G)\circ(\phi\rtimes G)=(\psi\circ\phi)\rtimes G.
\]
\end{lem}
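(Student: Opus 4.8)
The plan is to realize $\phi\rtimes G$ as the integrated form of a covariant homomorphism into $M(B\rtimes_\epsilon G)$, and then to derive every remaining assertion — the $\what\delta-\what\epsilon$ equivariance, the behavior under nondegeneracy and under $\phi(A)\subset B$, and the composition identity — by evaluating on the dense $*$-subalgebra of $A\rtimes_\delta G$ spanned by the elements $j_A(a)j_G^\delta(g)$ occurring in \eqref{phi rtimes delta}. For the construction I would start from the canonical extension $\bar{j_B}\:M(B)\to M(B\rtimes_\epsilon G)$ of the nondegenerate homomorphism $j_B$, and check that $(\bar{j_B}\circ\phi,\ j_G^\epsilon)$ is a possibly degenerate covariant homomorphism of $(A,\delta)$ into $M(B\rtimes_\epsilon G)$: the covariance identity reduces to comparing $((\bar{j_B}\circ\phi)\otimes\id)(\delta(a))$ with the appropriate $w_G$-conjugate of $\bar{j_B}(\phi(a))\otimes 1$, and the essential input there is the form of equivariance used in the proof of \propref{equivariant}, namely $\bar{\phi\otimes\id}\circ\delta=\epsilon\circ\phi$ with the extension $\bar{\phi\otimes\id}\:\wilde M(A\otimes C^*(G))\to M(B\otimes C^*(G))$, combined with covariance of $(j_B,j_G^\epsilon)$ extended to $M(B)$. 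The integrated form $(\bar{j_B}\circ\phi)\times j_G^\epsilon$, which is still defined even when $\bar{j_B}\circ\phi$ is degenerate, then gives a homomorphism $A\rtimes_\delta G\to M(B\rtimes_\epsilon G)$; that its range lands inside $M(B\rtimes_\epsilon G)$ and not merely in some larger multiplier algebra is seen by multiplying $\bar{j_B}(\phi(a))j_G^\epsilon(g)$ on either side by a spanning element $j_B(b)j_G^\epsilon(f)$ of $B\rtimes_\epsilon G$ and using $b\phi(a),\phi(a)^*b\in B$. This map satisfies \eqref{phi rtimes delta}, and uniqueness is immediate from density.

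The \emph{moreover} clauses then fall out of the same generator-level bookkeeping. If $\phi$ is nondegenerate then $\bar{j_B}\circ\phi$ is nondegenerate, hence so is its integrated form. The $\what\delta-\what\epsilon$ equivariance holds because $\what\delta_s$ and $\what\epsilon_s$ fix the $j_A$- and $j_B$-parts and send $j_G^\delta(g)$ and $j_G^\epsilon(g)$ to $j_G^\delta(\rt_s g)$ and $j_G^\epsilon(\rt_s g)$ respectively, with $\rt_s g\in C_c(G)$. And if $\phi(A)\subset B$ then each $j_B(\phi(a))j_G^\epsilon(g)$ is itself a spanning element of $B\rtimes_\epsilon G$, so $(\phi\rtimes G)(A\rtimes_\delta G)\subset B\rtimes_\epsilon G$.

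For the composition identity the first point is that the hypothesis ``$\phi(A)\subset B$ or $\psi$ nondegenerate'' is exactly what makes both sides meaningful: in the first case $\phi\rtimes G$ already maps into $B\rtimes_\epsilon G$, on which $\psi\rtimes G$ is defined, and $\psi\circ\phi\:A\to M(C)$ is the restriction of $\psi$ along $\phi(A)\subset B$; in the second case $\psi\rtimes G$ is nondegenerate, hence extends to $\overline{\psi\rtimes G}$ on multipliers, and $\psi\circ\phi:=\bar\psi\circ\phi$ via the canonical extension $\bar\psi\:M(B)\to M(C)$. Either way $\psi\circ\phi$ is again $\delta-\gamma$ equivariant by \propref{equivariant} (composition of module maps), so $(\psi\circ\phi)\rtimes G$ is defined, and I would evaluate both sides on a generator $j_A(a)j_G^\delta(g)$: it goes under $\phi\rtimes G$ to $j_B(\phi(a))j_G^\epsilon(g)$ and then under $\psi\rtimes G$ (or its extension) to $j_C(\psi(\phi(a)))j_G^\gamma(g)=j_C((\psi\circ\phi)(a))j_G^\gamma(g)=((\psi\circ\phi)\rtimes G)(j_A(a)j_G^\delta(g))$; in the nondegenerate case this uses the identities $\overline{\psi\rtimes G}\circ\bar{j_B}=\bar{j_C}\circ\bar\psi$ and $\overline{\psi\rtimes G}\circ j_G^\epsilon=j_G^\gamma$, each obtained by pushing an approximate identity of $B$ through \eqref{phi rtimes delta}. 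Density then finishes the argument.

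The step I expect to be the main obstacle is not the composition identity, which is essentially bookkeeping once the ingredients are in place, but the construction of $\phi\rtimes G$ itself in the degenerate setting: verifying that $(\bar{j_B}\circ\phi,\ j_G^\epsilon)$ really is covariant when $\phi$ is only equivariant in the extended $\wilde M(A\otimes C^*(G))$ sense, and that a covariant homomorphism with a possibly degenerate first coordinate still integrates to a genuine homomorphism whose image sits inside $M(B\rtimes_\epsilon G)$. That is where the machinery behind \propref{equivariant} must be deployed with care.
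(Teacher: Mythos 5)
Your proposal is correct and follows essentially the same route as the paper: the paper disposes of the existence and uniqueness of $\phi\rtimes G$ by citing \cite[Lemma~A.46]{enchilada}, whose proof is precisely the integrated-form construction you reconstruct, and it handles all the remaining clauses by the same generator-level direct calculations you carry out. The one step you flag as delicate --- integrating a covariant pair whose first coordinate may be degenerate --- is exactly the content of the cited lemma, so nothing further is needed.
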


\begin{proof}
The first part is 
\cite[Lemma~A.46]{enchilada}, 
and the 
other statements
follow from direct calculation.
\end{proof}

For the next step, we need some ancillary lemmas.
Lemmas~\ref{idealizer}--\ref{abs} are completely routine --- we record them for convenient reference.
Lemmas~\ref{subalg coc}--\ref{perturb hom} are included to prepare for \lemref{act gen}.

\begin{lem}\label{idealizer}
Let $B$ be a $C^*$-algebra, and let $D$ and $E$ be $C^*$-subalgebras of $M(B)$.
Suppose that
\[
\clspn\{ED\}=D,
\]
so that also $\clspn\{DE\}=D$.
Then there is a unique homomorphism $\rho\:E\to M(D)$ such that
\[
\rho(m)d=md\righttext{for all}m\in E,d\in D,
\]
and moreover $\rho$ is nondegenerate.
\end{lem}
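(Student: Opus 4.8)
The plan is to build $\rho(m)$ by hand as a double centralizer of $D$, exploiting that $E$ and $D$ already sit inside the single algebra $M(B)$, where all the relevant products exist and are associative. First I would observe that the hypothesis is automatically two-sided: taking adjoints in $\clspn\{ED\}=D$ gives $\clspn\{DE\}=\clspn\{(ED)^*\}=(\clspn\{ED\})^*=D^*=D$, which is the ``so that'' clause. In particular, for $m\in E$ and $d\in D$ the products $md$ and $dm$, computed in $M(B)$, already lie in $D$ (each is a single element of $ED$, resp.\ $DE$). Hence left and right multiplication by a fixed $m\in E$ restrict to bounded linear maps $L_m,R_m\:D\to D$, with $\|L_m\|,\|R_m\|\le\|m\|$.

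Next I would check that the pair $(L_m,R_m)$ is a double centralizer of $D$: the identities $L_m(d_1d_2)=(L_md_1)d_2$, $R_m(d_1d_2)=d_1(R_md_2)$, and $d_1(L_md_2)=(R_md_1)d_2$ are all instances of associativity of multiplication in $M(B)$. This produces the desired $\rho(m)\in M(D)$ with $\rho(m)d=md$ and $d\rho(m)=dm$ for all $d\in D$. That $\rho\:E\to M(D)$ is then a $*$-homomorphism is routine: linearity is clear, $\rho(m_1m_2)=\rho(m_1)\rho(m_2)$ is once more associativity in $M(B)$, and $\rho(m^*)=\rho(m)^*$ follows by comparing actions on $D$, e.g.\ $\rho(m^*)d=m^*d=(d^*m)^*=(d^*\rho(m))^*=\rho(m)^*d$ and symmetrically on the right. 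Nondegeneracy is immediate from the hypothesis, since $\clspn\{\rho(E)D\}=\clspn\{ED\}=D$.

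For uniqueness, I would suppose $\rho'\:E\to M(D)$ is another $*$-homomorphism with $\rho'(m)d=md$ for all $m\in E$, $d\in D$; then $(\rho(m)-\rho'(m))d=0$ for every $d$, and applying this with $m$ replaced by $m^*$ and taking adjoints shows the right action of $\rho(m)-\rho'(m)$ on $D$ also vanishes, whence $\rho(m)=\rho'(m)$. I do not anticipate a genuine obstacle here — the authors themselves flag the lemma as completely routine — the only points needing a moment's care are invoking the density hypothesis $\clspn\{ED\}=D$ (not merely an inclusion $ED\subseteq M(B)$) in order to land the products back inside $D$, and using throughout that ``homomorphism'' means $*$-homomorphism, so that the adjoint manipulations above are legitimate.
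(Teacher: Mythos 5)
Your proof is correct, and it supplies exactly the routine double-centralizer argument that the paper omits (the authors give no proof, declaring Lemmas~\ref{idealizer}--\ref{abs} ``completely routine''). All the key points are handled properly: the adjoint trick giving $\clspn\{DE\}=D$, the observation that $ED\subseteq\clspn\{ED\}=D$ so that $md$ and $dm$ individually land in $D$, the verification of the double-centralizer identities via associativity in $M(B)$, and uniqueness from the fact that a multiplier of $D$ is determined by its left action on $D$.
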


\begin{lem}\label{mult subalg}
Let $D$, $B$, and $F$ be $C^*$-algebras, with $D\subset M(B)$, and let $\nu\:F\to M(B)$ be a nondegenerate homomorphism.
Suppose that $\clspn\{\nu(F)D\}=D$.
Let
$E=\nu(F)$.
Let $\rho\:E\to M(D)$ be the homomorphism from \lemref{idealizer}.
Then $\tau:=\rho\circ\nu\:F\to M(D)$ is the unique nondegenerate homomorphism satisfying
\begin{equation}\label{nu tau}
\nu(f)d=\tau(f)d\righttext{for all}f\in F,d\in D.
\end{equation}
\end{lem}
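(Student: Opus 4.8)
The plan is to invoke \lemref{idealizer} once and then chase definitions. First I would note that, since a $*$-homomorphism of $C^*$-algebras has closed range, $E=\nu(F)$ really is a $C^*$-subalgebra of $M(B)$, so that the standing hypothesis $\clspn\{\nu(F)D\}=D$ is precisely the hypothesis $\clspn\{ED\}=D$ required to apply \lemref{idealizer}. That lemma then supplies the nondegenerate homomorphism $\rho\:E\to M(D)$ with $\rho(m)d=md$ for all $m\in E$ and $d\in D$. Corestricting $\nu$ to $E$, the composite $\tau:=\rho\circ\nu\:F\to M(D)$ is a well-defined homomorphism, with nothing further to check for that.

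Next I would verify \eqref{nu tau}: for $f\in F$ and $d\in D$ we have $\nu(f)\in E$, so $\tau(f)d=\rho(\nu(f))d=\nu(f)d$ by the defining property of $\rho$. Nondegeneracy of $\tau$ is then immediate, since $\clspn\{\tau(F)D\}=\clspn\{\nu(F)D\}=D$ by that identity together with the hypothesis (alternatively, $\tau(F)=\rho(E)$ and $\rho$ is nondegenerate). For uniqueness, if $\sigma\:F\to M(D)$ is any homomorphism satisfying $\sigma(f)d=\nu(f)d$ for all $f\in F$, $d\in D$, then $\bigl(\sigma(f)-\tau(f)\bigr)d=0$ for every $d\in D$, and since an element of $M(D)$ that annihilates all of $D$ must vanish (for instance, represent $D$ faithfully and nondegenerately on a Hilbert space), $\sigma=\tau$. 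Note this actually pins $\tau$ down uniquely even before imposing nondegeneracy.

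I do not expect a genuine obstacle here: as the surrounding text signals, this is a bookkeeping lemma, and the only point that takes a moment's thought is recognizing that $\nu(F)$ is closed, so that \lemref{idealizer} applies on the nose. Each of the three verifications above should occupy at most a line.
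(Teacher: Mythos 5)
Your proof is correct and is exactly the routine argument the paper has in mind: the paper explicitly declares Lemmas~\ref{idealizer}--\ref{abs} ``completely routine'' and records them without proof, so there is nothing to diverge from. Your verifications (closedness of $\nu(F)$ so that \lemref{idealizer} applies, the computation $\tau(f)d=\rho(\nu(f))d=\nu(f)d$, nondegeneracy from $\clspn\{\nu(F)D\}=D$, and uniqueness because a multiplier of $D$ annihilating $D$ is zero) are all sound, and your observation that uniqueness already holds without assuming nondegeneracy is a correct, if minor, sharpening.
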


\begin{lem}\label{abs}
Keep the notation from \lemref{mult subalg}, and let $C$ be another $C^*$-algebra.
Let $w\in M(F\otimes C)$.
Define
\begin{align*}
U&=(\nu\otimes\id)(w)\in M(E\otimes C)\subset M(B\otimes C)
\\
W&=(\tau\otimes\id)(w)\in M(D\otimes C).
\end{align*}
Then
\[
W=(\rho\otimes\id)(U),
\]
and
\[
Wm=Um\righttext{for all}m\in \wilde M(D\otimes C).
\]
\end{lem}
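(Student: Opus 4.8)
The plan is to reduce both assertions to the defining relations of $\rho$ and $\tau$ from Lemmas~\ref{idealizer} and~\ref{mult subalg}, tensored with $\id_C$ and carried up to multiplier algebras via the standard functoriality of canonical extensions of nondegenerate homomorphisms \cite[Appendix~A]{enchilada}. Two elementary observations do the work. First, $\rho\otimes\id\:E\otimes C\to M(D\otimes C)$ satisfies $(\rho\otimes\id)(x)y=xy$ for all $x\in E\otimes C$ and $y\in D\otimes C$ --- check this on simple tensors using Lemma~\ref{idealizer} and extend by linearity and continuity --- so it is nondegenerate, with $\clspn\{(E\otimes C)(D\otimes C)\}=D\otimes C$ a consequence of $\clspn\{ED\}=D$. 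Second, $\tau\otimes\id=(\rho\otimes\id)\circ(\nu\otimes\id)$ on $F\otimes C$, since $\tau=\rho\circ\nu$.

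For the identity $W=(\rho\otimes\id)(U)$: since $\nu$ and $\tau$ are nondegenerate, $\nu\otimes\id$ and $\tau\otimes\id$ extend canonically to $M(F\otimes C)$, and $W=\overline{\tau\otimes\id}(w)$ is by definition the unique $T\in M(D\otimes C)$ with $T\,(\tau\otimes\id)(a)=(\tau\otimes\id)(wa)$ and $(\tau\otimes\id)(a)\,T=(\tau\otimes\id)(aw)$ for all $a\in F\otimes C$. Using $U=\overline{\nu\otimes\id}(w)$ and the second observation, one checks directly that $\overline{\rho\otimes\id}(U)$ satisfies these two relations: push the product inside $\nu\otimes\id$, apply $\overline{\rho\otimes\id}$, and invoke $\tau\otimes\id=(\rho\otimes\id)\circ(\nu\otimes\id)$ on $F\otimes C$. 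Hence $W=\overline{\rho\otimes\id}(U)$, which is what $(\rho\otimes\id)(U)$ denotes.

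For $Wm=Um$, first upgrade the first observation to multipliers: from $W=\overline{\rho\otimes\id}(U)$, the canonical-extension identity $\overline{\rho\otimes\id}(U)\,(\rho\otimes\id)(x)=(\rho\otimes\id)(Ux)$ for $x\in E\otimes C$, and $U\in M(E\otimes C)$, one obtains $Wd=Ud$ and $dW=dU$ for every $d\in D\otimes C$; here $U$ may be read as the honest multiplier of $B\otimes C$ coming from the nondegenerate homomorphism $\nu\otimes\id\:F\otimes C\to M(B\otimes C)$. Next, recall from \cite[Proposition~A.6]{enchilada} that the possibly degenerate inclusion $D\hookrightarrow M(B)$, tensored with $\id_C$, extends to a homomorphism $\wilde M(D\otimes C)\to M(B\otimes C)$; this is how an element $m\in\wilde M(D\otimes C)$ is to be regarded inside $M(B\otimes C)$, and it is why the hypothesis is stated for $\wilde M(D\otimes C)$ rather than all of $M(D\otimes C)$. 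With $m$ so regarded, for $b\in B\otimes C$ write $mb$ as a norm limit of finite sums $\sum d_i(b_i\otimes 1)$ with $d_i\in D\otimes C$ and $b_i\in B$, by pushing the second tensor factor of $b$ onto $m$ and using $m(1\otimes C)\subset D\otimes C$; then $(Wm)b=\sum(Wd_i)(b_i\otimes 1)=\sum(Ud_i)(b_i\otimes 1)=(Um)b$, by the agreement of $W$ and $U$ on $D\otimes C$. The symmetric computation on the right gives $Wm=Um$ in $M(B\otimes C)$.

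The step I expect to be the genuine nuisance --- more than any single estimate --- is the multiplier-algebra bookkeeping in this last part: keeping straight that $W$ lives a priori in $M(D\otimes C)$, that $U$ lives in $M(E\otimes C)\subset M(B\otimes C)$, and that $m$ lives in $\wilde M(D\otimes C)\subset M(B\otimes C)$, and verifying that all the products written really do coincide in $M(B\otimes C)$. Everything else is a direct, if fiddly, unwinding of definitions.
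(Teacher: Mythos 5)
The paper offers no proof of \lemref{abs} at all: it is one of the lemmas declared ``completely routine'' and recorded only for reference, so there is nothing to compare against line by line. Your argument is correct and supplies exactly the verification the authors are suppressing: the first identity is the compatibility of canonical extensions with the factorization $\tau=\rho\circ\nu$, and the second reduces to $Wd=Ud$ for $d\in D\otimes C$ plus the factorization $m(b'\otimes c)=\bigl(m(1\otimes c)\bigr)(b'\otimes 1)$ available for $m\in\wilde M(D\otimes C)$. The one point worth making explicit --- and you have correctly identified it as the nuisance --- is that $W\in M(D\otimes C)$ is not a priori a multiplier of $B\otimes C$, so the expression $(Wm)b=\lim\sum(Wd_i)(b_i\otimes 1)$ is really serving as the \emph{definition} of the left-hand side; its well-definedness follows from $Wd_i=Ud_i$ together with the fact that $U$ genuinely lies in $M(B\otimes C)$, and this reading is consistent with the only use of the lemma, in the proof of \lemref{subalg coc}, where the products in question land in $\wilde M(D\otimes C)$.
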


Let $D$, $B$, and $C$ be $C^*$-algebras, with $D\subset M(B)$.
Let $\sigma\:D\hookrightarrow M(B)$ be the inclusion map.
Then,
by \cite[Proposition~A.6]{enchilada},
$\sigma\otimes\id\:D\otimes C\hookrightarrow M(B\otimes C)$
extends canonically to an injective
homomorphism
\[
\bar{\sigma\otimes\id}\:\wilde M(D\otimes C)\to M(B\otimes C)
\]
that is continuous from the $C$-strict topology to the strict topology,
and we frequently identify $\wilde M(D\otimes C)$ with its image in $M(B\otimes C)$.

\begin{lem}\label{subalg coc}
Keep the notation from the Lemmas~\ref{idealizer}--\ref{abs},
and let $F=C_0(G)$, $C=C^*(G)$, and $w=w_G$.
Also let $\epsilon$ be a coaction of $G$ on $B$.
Suppose that $D$ is strongly $\epsilon$-invariant,
and let $\zeta=\epsilon|_D$.
Suppose that
$U:=(\nu\otimes\id)(w_G)$ is an $\epsilon$-cocycle, and
$W:=(\tau\otimes\id)(w_G)$ is a $\zeta$-cocycle.
Define
\begin{align*}
\wilde\epsilon&:=\ad U\circ\epsilon
\midtext{and}
\wilde\zeta:=\ad W\circ\zeta.
\end{align*}
Then
$D$ is also strongly $\wilde\epsilon$-invariant, and $\wilde\zeta=\wilde\epsilon|_D$.
\end{lem}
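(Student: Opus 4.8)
The plan is to prove the identity $\wilde\zeta=\wilde\epsilon|_D$ first, by a direct manipulation of the defining conjugation formulas using \lemref{abs}, and then to read off strong $\wilde\epsilon$-invariance of $D$ from the hypothesis that $W$ is a $\zeta$-cocycle.

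The key observation, beyond \lemref{abs}, is that for each $d\in D$ the element $\zeta(d)$ — which, under the identification of $\wilde M(D\otimes C^*(G))$ with its image in $M(B\otimes C^*(G))$, coincides with the canonical extension of $\epsilon$ applied to $d$ — lies in $\wilde M(D\otimes C^*(G))$: strong $\epsilon$-invariance of $D$ gives $\epsilon(d)(1\otimes C^*(G))\subset D\otimes C^*(G)$, adjoints give $(1\otimes C^*(G))\epsilon(d)\subset D\otimes C^*(G)$, and a routine density argument upgrades this to $\epsilon(d)(D\otimes C^*(G))\cup(D\otimes C^*(G))\epsilon(d)\subset D\otimes C^*(G)$. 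Now \lemref{abs}, applied with the present choices $F=C_0(G)$, $C=C^*(G)$, $w=w_G$, gives $Wm=Um$ for all $m\in\wilde M(D\otimes C^*(G))$. Taking $m=\zeta(d)=\epsilon(d)$ yields $W\zeta(d)=U\epsilon(d)$ for every $d\in D$; replacing $d$ by $d^*$ and taking adjoints (using that $\zeta$ and the extension of $\epsilon$ are $*$-homomorphisms) yields $\epsilon(d)W^*=\epsilon(d)U^*$ for every $d\in D$. Combining,
\[
\wilde\zeta(d)=W\zeta(d)W^*=U\epsilon(d)W^*=U\epsilon(d)U^*\qquad(d\in D),
\]
and the right-hand side is exactly the value at $d$ of the canonical extension to $M(B)$ of the coaction $\wilde\epsilon=\ad U\circ\epsilon$ — this extension exists precisely because $U$ is an $\epsilon$-cocycle, so that $\wilde\epsilon$ is a genuine coaction on $B$.

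This computation already yields the remaining assertion. Since $W$ is a $\zeta$-cocycle, $\wilde\zeta=\ad W\circ\zeta$ is a coaction on $D$, so $\clspn\{\wilde\zeta(D)(1\otimes C^*(G))\}=D\otimes C^*(G)$; rewriting the left side via the identity just obtained, $\clspn\{\wilde\epsilon(D)(1\otimes C^*(G))\}=D\otimes C^*(G)$, i.e.\ $D$ is strongly $\wilde\epsilon$-invariant, and the restricted coaction $\wilde\epsilon|_D$ is then by construction the map $d\mapsto\ad U(\epsilon(d))$, which is $\wilde\zeta$. The computation is short; the part needing care is the multiplier bookkeeping — tracking which relative multiplier algebra $\wilde M(-\otimes C^*(G))$ each of $U$, $W$, and $\epsilon(d)$ inhabits, and how these sit inside $M(B\otimes C^*(G))$ — so that \lemref{abs} may legitimately be invoked; no new idea is needed beyond \lemref{abs} and the standard fact that a cocycle perturbation of a coaction is again a coaction.
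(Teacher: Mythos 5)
Your proof is correct and follows essentially the same route as the paper's: the paper likewise computes, for $d\in D$, that $\wilde\epsilon(d)=\ad U\circ\zeta(d)=\ad W\circ\zeta(d)=\wilde\zeta(d)$ by invoking \lemref{abs} with $m=\zeta(d)\in\wilde M(D\otimes C^*(G))$, and then deduces strong $\wilde\epsilon$-invariance from the fact that $\wilde\zeta$ is a coaction on $D$. You have merely made explicit the multiplier bookkeeping and the adjoint step that the paper leaves implicit in the line ``$\ad U\circ\zeta(d)=\ad W\circ\zeta(d)$ (by \lemref{abs})''.
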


\begin{proof}
For $d\in D$, we have
\begin{align*}
\wilde\epsilon(d)
&=\ad U\circ\epsilon(d)
\\&=\ad U\circ\zeta(d)\righttext{(since $\zeta=\epsilon|_B$)}
\\&=\ad W\circ\zeta(d)\righttext{(by \lemref{abs})}
\\&=\wilde\zeta(d).
\end{align*}
Since $\wilde\zeta$ is a coaction of $G$ on $D$, we conclude that $D$ is strongly $\wilde\epsilon$-invariant.
\end{proof}

\begin{lem}\label{perturb hom}
Let $(A,\delta)$ and $(B,\epsilon)$ be coactions,
and let $\phi\:A\to M(B)$ be a possibly degenerate $\delta-\epsilon$ equivariant homomorphism.
Let $\mu\:C_0(G)\to M(A)$ and $\nu\:C_0(G)\to M(B)$ be nondegenerate homomorphisms,
and assume that
\[
\phi\bigl(a\mu(f)\bigr)=\phi(a)\nu(f)\righttext{for all}a\in A,f\in C_0(G).
\]
Define
\begin{align*}
V&=(\mu\otimes\id)(w_G)\in M(A\otimes C^*(G))
\\
U&=(\nu\otimes\id)(w_G)\in M(B\otimes C^*(G)).
\end{align*}
Suppose that $V$ is a $\delta$-cocycle and $U$ is an $\epsilon$-cocycle.
Define
\begin{align*}
\wilde\delta&=\ad V\circ\delta
\\
\wilde\epsilon&=\ad U\circ\epsilon.
\end{align*}
Then $\phi$ is also $\wilde\delta-\wilde\epsilon$ equivariant.
\end{lem}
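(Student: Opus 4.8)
The plan is to reduce the claim to an intertwining relation between the cocycles $V$ and $U$, and then to extract that relation from the module-type hypothesis on $\mu$ and $\nu$. Write $\bar{\phi\otimes\id}$ for the canonical extension $\wilde M(A\otimes C^*(G))\to M(B\otimes C^*(G))$ of $\phi\otimes\id$ from \cite[Proposition~A.6]{enchilada}, so that, exactly as in the proof of \propref{equivariant}, the hypothesis that $\phi$ is $\delta-\epsilon$ equivariant reads $\epsilon\circ\phi=\bar{\phi\otimes\id}\circ\delta$. Since $\wilde\delta=\ad V\circ\delta$, $\wilde\epsilon=\ad U\circ\epsilon$, and since $\wilde\delta=\ad V\circ\delta$ is again a coaction (because $V$ is a $\delta$-cocycle), we have $\wilde\delta(a)=V\delta(a)V^*\in\wilde M(A\otimes C^*(G))$, and it is enough to prove
\[
\bar{\phi\otimes\id}\bigl(V\delta(a)V^*\bigr)=U\,\bar{\phi\otimes\id}\bigl(\delta(a)\bigr)\,U^*\qquad(a\in A).
\]
The first step is to establish, for all $\xi\in A\otimes C^*(G)$, the relations
\begin{align*}
(\phi\otimes\id)(\xi V)&=(\phi\otimes\id)(\xi)\,U, & (\phi\otimes\id)(V\xi)&=U\,(\phi\otimes\id)(\xi),
\end{align*}
together with their adjoints (with $V,U$ replaced by $V^*,U^*$). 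On an elementary tensor $\xi=a\otimes c$, and with an element $g\in C_0(G)\otimes C^*(G)$ in place of $w_G$, the first relation is the hypothesis $\phi(a\mu(f))=\phi(a)\nu(f)$ tensored with $\id$, and the second is its left-handed companion $\phi(\mu(f)a)=\nu(f)\phi(a)$, which follows from the hypothesis because $\phi$ is a $*$-homomorphism and $C_0(G)$ is conjugation-closed. These extend to all $g\in C_0(G)\otimes C^*(G)$ and all $\xi\in A\otimes C^*(G)$ by linearity and norm continuity, and one then replaces $g$ by $w_G$ by writing $w_G$ as a strict limit of elements of $C_0(G)\otimes C^*(G)$: since $\mu\otimes\id$ and $\nu\otimes\id$, being nondegenerate, extend strictly continuously to $M(C_0(G)\otimes C^*(G))$, and since left or right multiplication of a strictly convergent net of multipliers by the fixed algebra element $\xi$ yields a norm convergent net, the identities survive in the limit.

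Next I would combine these relations with the facts that $\bar{\phi\otimes\id}$ is a homomorphism on $\wilde M(A\otimes C^*(G))$ extending $\phi\otimes\id$ and that $\bar{\phi\otimes\id}(\delta(a))=\epsilon(\phi(a))$. For $\xi\in A\otimes C^*(G)$, checking at each step that the product in question again lands in $A\otimes C^*(G)$, one computes
\begin{align*}
\bar{\phi\otimes\id}\bigl(V\delta(a)V^*\bigr)\,(\phi\otimes\id)(\xi)
&=(\phi\otimes\id)\bigl(V\delta(a)V^*\xi\bigr)\\
&=U\,(\phi\otimes\id)\bigl(\delta(a)V^*\xi\bigr)\\
&=U\,\epsilon(\phi(a))\,(\phi\otimes\id)\bigl(V^*\xi\bigr)\\
&=U\,\epsilon(\phi(a))\,U^*\,(\phi\otimes\id)(\xi)=\wilde\epsilon(\phi(a))\,(\phi\otimes\id)(\xi),
\end{align*}
and a symmetric computation on the other side gives $(\phi\otimes\id)(\xi)\,\bar{\phi\otimes\id}(V\delta(a)V^*)=(\phi\otimes\id)(\xi)\,\wilde\epsilon(\phi(a))$. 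Since the canonical extension of \cite[Proposition~A.6]{enchilada} is determined by how it interacts with $A\otimes C^*(G)$ on both sides, these two identities both identify $\bar{\phi\otimes\id}(V\delta(a)V^*)$ with $\wilde\epsilon(\phi(a))=U\,\bar{\phi\otimes\id}(\delta(a))\,U^*$ and, equivalently, already constitute the assertion that $\phi$ is $\wilde\delta-\wilde\epsilon$ equivariant.

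The conceptual content here is slight, and the only real obstacle is the bookkeeping forced by the degeneracy of $\phi$: because $\phi$ need not extend to $M(A)$, one cannot manipulate the multipliers $V$ and $\delta(a)$ freely, so at each step of the computations above one must verify that the relevant product lies in $A\otimes C^*(G)$ (so that the honest map $\phi\otimes\id$, not an extension of it, is being applied) before the relations of the first paragraph may be invoked; the strict-continuity passage from $C_0(G)\otimes C^*(G)$ to $w_G$, together with matching the outcome against the precise characterization of the canonical extension $\bar{\phi\otimes\id}$, is where this care is concentrated.
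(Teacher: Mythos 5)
Your reduction to the intertwining relations $(\phi\otimes\id)(\xi V)=(\phi\otimes\id)(\xi)U$ and $(\phi\otimes\id)(V\xi)=U(\phi\otimes\id)(\xi)$ for $\xi\in A\otimes C^*(G)$ is fine (including the adjoint trick giving the left-handed module relation and the strict-limit passage from $C_0(G)\otimes C^*(G)$ to $w_G$), and the displayed computation is correct step by step. The gap is in the very last inference. What you have proved is that the two multipliers $m_1=\bar{\phi\otimes\id}\bigl(V\delta(a)V^*\bigr)$ and $m_2=U\epsilon(\phi(a))U^*$ of $B\otimes C^*(G)$ satisfy $m_1\eta=m_2\eta$ and $\eta m_1=\eta m_2$ for all $\eta$ in $(\phi\otimes\id)(A\otimes C^*(G))$. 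When $\phi$ is degenerate this set need not act nondegenerately on $B\otimes C^*(G)$, so these identities do not determine a multiplier: $m_1-m_2$ could a priori be any multiplier annihilating the (possibly proper) ideal generated by $(\phi\otimes\id)(A\otimes C^*(G))$. Your justification --- that the canonical extension of \cite[Proposition~A.6]{enchilada} ``is determined by how it interacts with $A\otimes C^*(G)$ on both sides'' --- misstates that characterization: $\bar{\phi\otimes\id}(m)$ is the multiplier determined by $\bar{\phi\otimes\id}(m)(1\otimes c)=(\phi\otimes\id)\bigl(m(1\otimes c)\bigr)$ for $c\in C^*(G)$, i.e.\ by testing against $1\otimes C^*(G)$, not against the image of $\phi\otimes\id$. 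This is precisely the pitfall of degenerate homomorphisms that the lemma exists to navigate.

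The argument can be repaired along your lines, but not for free: one must compute $m_1(1\otimes c)=(\phi\otimes\id)\bigl(V\delta(a)V^*(1\otimes c)\bigr)$ directly, and since $V^*(1\otimes c)=(\mu\otimes\id)\bigl(w_G^*(1\otimes c)\bigr)$ lies in $M(A)\otimes C^*(G)$ rather than in $A\otimes C^*(G)$, your relations do not apply to it as written; one has to approximate $w_G^*(1\otimes c)$ in $C_0(G)\otimes C^*(G)$, rewrite $\delta(a)(\mu(f)\otimes d)=\delta(a)(1\otimes d)(\mu(f)\otimes 1)$, and use the additional intertwining identity $(\phi\otimes\id)\bigl(\xi(\mu(f)\otimes 1)\bigr)=(\phi\otimes\id)(\xi)(\nu(f)\otimes 1)$ before invoking equivariance of $\phi$ and reassembling $U^*(1\otimes c)$. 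The paper avoids all of this by factoring $\phi$ through its image $D=\phi(A)$: the corestriction $A\to D$ is surjective, so cocycle naturality applies to it directly, while Lemmas \ref{idealizer}--\ref{subalg coc} show that $D$ is strongly $\wilde\epsilon$-invariant with $\wilde\epsilon|_D=\ad W\circ(\epsilon|_D)$ for $W=(\phi\otimes\id)(V)$, and the inclusion $D\hookrightarrow M(B)$ then takes care of itself. Your route is genuinely different and can be made to work, but as written the key final step is unjustified.
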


\begin{proof}
Define
$D=\phi(A)$.
Then there is a unique coaction $\zeta$ of $G$ on $D$ such that the surjection $\phi\:A\to D$ is $\delta-\zeta$ equivariant.
It follows that $D$ is strongly $\epsilon$-invariant. Moreover, $\zeta=\epsilon|_D$,
since for all $d\in D$ we can choose $a\in A$ such that $d=\phi(a)$, and then
\begin{align*}
\zeta(d)
&=\zeta\circ\phi(d)
\\&=(\phi\otimes\id)\circ\delta(a)
\\&=\epsilon\circ\phi(a)
\righttext{(regarding $\wilde M(D\otimes C^*(G))\subset M(B\otimes C^*(G))$)}
\\&=\epsilon(d).
\end{align*}
The canonical extension $\bar\phi\:M(A)\to M(D)$ takes $\mu$ to a the unique nondegenerate homomorphism $\tau\:C_0(G)\to M(D)$ satisfying
\eqref{nu tau} with $F=C_0(G)$,
and the unitary
\[
W:=(\phi\otimes\id)(V)=(\tau\otimes\id)(w_G)
\]
is a $\zeta$-cocycle.
The hypotheses imply that 
$\nu(C_0(G))D=D$.
Thus we can apply \lemref{subalg coc}:
The right-front rectangle
(involving $D$ and $M(B)$)
of the diagram
\[
\xymatrix@C-20pt{
A \ar[rr]^-\phi \ar[dd]_{\wilde\delta} \ar[dr]_\phi
&&M(B) \ar[dd]^{\wilde\epsilon}
\\
&D \ar[dd]^(.2){\wilde\zeta} \ar@{^(->}[ur]
\\
\wilde M(A\otimes C^*(G)) \ar'[r]^-{\bar{\phi\otimes\id}}[rr] \ar[dr]_(.4){\phi\otimes\id}
&&M(B\otimes C^*(G))
\\
&\wilde M(D\otimes C^*(G)) \ar@{^(->}[ur]
}
\]
commutes,
and the left-front rectangle
(involving $A$ and $D$)
commutes by naturality of cocycles,
and therefore the rear rectangle
(involving $A$ and $M(B)$)
commutes, giving $\wilde\delta-\wilde\epsilon$ equivariance of $\phi$.
\end{proof}

We are now ready for the second step
of the Fischer construction
for possibly degenerate homomorphisms:

\begin{lem}\label{act gen}
Let $(A,\alpha,\mu)$ and $(B,\beta,\nu)$ be equivariant actions,
and let $\phi\:A\to M(B)$ be a possibly degenerate $\alpha-\beta$ equivariant homomorphism such that
\[
\phi\bigl(a\mu(f)\bigr)=\phi(a)\nu(f)\righttext{for all}a\in A,f\in C_0(G).
\]
Then there is a  unique \(possibly degenerate\) homomorphism
\[
\phi\rtimes G\:A\rtimes_\alpha G\to M(B\rtimes_\beta G)
\]
such that
\begin{equation}\label{phi rtimes alpha}
\begin{split}
(\phi\rtimes G)\bigl(i_A(a)i_G^\alpha(c)\bigr)&=i_B\circ\phi(a)i_G^\beta(c)
\\&\qquad\righttext{for all}a\in A,c\in C^*(G).
\end{split}
\end{equation}
Moreover, $\phi\rtimes G$
is nondegenerate if $\phi$ is, and
is $\wilde\alpha-\wilde\beta$ equivariant, and
\begin{equation}\label{phi rtimes G k}
\begin{split}
(\phi\rtimes G)\bigl(c(\mu\rtimes G)(k)\bigr)
&=(\phi\rtimes G)(c)(\nu\rtimes G)(k)
\\&\qquad\righttext{for all}c\in A\rtimes_\alpha G,k\in\KK.
\end{split}
\end{equation}
Also,
if $\phi(A)\subset B$ then $(\phi\rtimes G)(A\rtimes_\alpha G)\subset B\rtimes_\beta G$.
Finally,
given a third action $(C,\gamma)$
and a possibly degenerate $\beta-\gamma$ equivariant homomorphism
$\psi\:B\to M(C)$,
if 
either 
$\phi(A)\subset B$
or $\psi$ is nondegenerate
then
\[
(\psi\rtimes G)\circ(\phi\rtimes G)=(\psi\circ\phi)\rtimes G.
\]
\end{lem}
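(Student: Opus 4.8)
The plan is to realize $\phi\rtimes G$ as the integrated form of a (possibly degenerate) covariant homomorphism, and then to read off each asserted property by working on the dense span of the elements $i_A(a)i_G^\alpha(g)$, $a\in A$, $g\in C_c(G)$. First I would observe that $i_B$, being nondegenerate, extends canonically to $M(B)$, so $i_B\circ\phi\:A\to M(B\rtimes_\beta G)$ makes sense, and that $(i_B\circ\phi,\,i_G^\beta)$ is a (possibly degenerate) covariant homomorphism of $(A,G,\alpha)$ into $M(B\rtimes_\beta G)$: the covariance identity reduces to $i_G^\beta(s)\,(i_B\circ\phi)(a)\,i_G^\beta(s)^*=(i_B\circ\phi)\bigl(\alpha_s(a)\bigr)$, which follows from the covariance of $(i_B,i_G^\beta)$ (extended strictly to $M(B)$) together with the $\alpha-\beta$ equivariance of $\phi$. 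Its integrated form $\phi\rtimes G:=(i_B\circ\phi)\rtimes i_G^\beta$ then exists by the universal property of the full crossed product (cf.\ \cite[Appendix~A]{enchilada}), satisfies \eqref{phi rtimes alpha} by construction, and is the unique homomorphism doing so because the elements in \eqref{phi rtimes alpha} span a dense subspace. The remaining ``easy'' assertions come out immediately: if $\phi$ is nondegenerate then so is $i_B\circ\phi$ (since $\clspn\{(i_B\circ\phi)(A)\,i_B(B)\}=\clspn\{i_B(\phi(A)B)\}=i_B(B)$), hence so is $\phi\rtimes G$, an integrated form being nondegenerate exactly when its algebra part is; and if $\phi(A)\subset B$ then the elements $i_A(a)i_G^\alpha(g)$ are carried into $\clspn\{i_B(B)\,i_G^\beta(C_c(G))\}=B\rtimes_\beta G$, so $(\phi\rtimes G)(A\rtimes_\alpha G)\subset B\rtimes_\beta G$.

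The heart of the matter is the interaction of the possibly degenerate $\phi$ with the structure maps $\mu$ and $\nu$, which is exactly where the hypothesis $\phi(a\mu(f))=\phi(a)\nu(f)$ is used. Working in the convolution algebras $C_c(G,A)$ and $C_c(G,B)$ (equivalently, on the spanning elements), I would push $i_G^\alpha(g)$ past $(i_A\circ\mu)(f)$ using the $\rt-\alpha$ equivariance of $\mu$, so that $i_G^\alpha(s)\,(i_A\circ\mu)(f)=(i_A\circ\mu)(\rt_s f)\,i_G^\alpha(s)$; apply $\phi\rtimes G$ via \eqref{phi rtimes alpha}; invoke the hypothesis in the form $\phi\bigl(a\mu(\rt_s f)\bigr)=\phi(a)\nu(\rt_s f)$; and then run the analogous manipulation backwards in $C_c(G,B)$. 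This first yields
\[
(\phi\rtimes G)\bigl(c\,(i_A\circ\mu)(f)\bigr)=(\phi\rtimes G)(c)\,(i_B\circ\nu)(f)\qquad(c\in A\rtimes_\alpha G,\ f\in C_0(G)),
\]
and then, carrying an additional factor $i_G^\alpha(h)$ through the same computation, the identity \eqref{phi rtimes G k}, since $\mu\rtimes G=(i_A\circ\mu)\rtimes i_G^\alpha$ and $\nu\rtimes G=(i_B\circ\nu)\rtimes i_G^\beta$, so that the spanning elements of $(\mu\rtimes G)(\KK)$ have the form $(i_A\circ\mu)(f)\,i_G^\alpha(h)$.

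For $\wilde\alpha-\wilde\beta$ equivariance I would first record that $\phi\rtimes G$ is $\what\alpha-\what\beta$ equivariant --- a standard fact about dual coactions, which for a possibly degenerate $\phi\rtimes G$ one can verify via the module-map criterion of \propref{equivariant} (or directly on the generators $i_A(a)i_G^\alpha(g)$, on which $\what\alpha$ is explicit). Then I would apply \lemref{perturb hom} to the coactions $\what\alpha$ and $\what\beta$, the homomorphism $\phi\rtimes G$, and the nondegenerate maps $i_A\circ\mu\:C_0(G)\to M(A\rtimes_\alpha G)$ and $i_B\circ\nu\:C_0(G)\to M(B\rtimes_\beta G)$: its hypotheses are precisely the displayed $C_0(G)$-identity above together with the facts, recorded in the Fischer construction, that $V_A=((i_A\circ\mu)\otimes\id)(w_G)$ is an $\what\alpha$-cocycle and $V_B=((i_B\circ\nu)\otimes\id)(w_G)$ is an $\what\beta$-cocycle. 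Its conclusion is exactly that $\phi\rtimes G$ is $\ad V_A\circ\what\alpha-\ad V_B\circ\what\beta$ equivariant, i.e.\ $\wilde\alpha-\wilde\beta$ equivariant.

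Finally, for the composition statement I would evaluate both sides on a spanning element: $(\psi\rtimes G)\bigl((\phi\rtimes G)(i_A(a)i_G^\alpha(g))\bigr)=(\psi\rtimes G)\bigl((i_B\circ\phi)(a)\,i_G^\beta(g)\bigr)$, the goal being to identify this with $\bigl(i_C\circ(\psi\circ\phi)\bigr)(a)\,i_G^\gamma(g)=\bigl((\psi\circ\phi)\rtimes G\bigr)(i_A(a)i_G^\alpha(g))$. If $\phi(A)\subset B$ then $(i_B\circ\phi)(a)\,i_G^\beta(g)$ already lies in $B\rtimes_\beta G$, so \eqref{phi rtimes alpha} for $\psi$ applies verbatim; if instead $\psi$ is nondegenerate then $\psi\rtimes G$ is nondegenerate, hence extends to multipliers and carries $(i_B\circ\phi)(a)$ to $\bigl(i_C\circ(\psi\circ\phi)\bigr)(a)$ and $i_G^\beta(g)$ to $i_G^\gamma(g)$, which again gives the identity; density together with the uniqueness already established then finishes it. I expect the main obstacle to be the second paragraph: carefully threading the degenerate $\phi$ through $\mu$ and $\nu$ to establish \eqref{phi rtimes G k} and to land exactly in the hypotheses of \lemref{perturb hom}. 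A secondary point, visible in the last paragraph, is that it is precisely the failure of $\phi$ to extend to $M(B)$ when $\phi(A)\not\subset B$ that makes the two alternative hypotheses in the composition clause genuinely necessary.
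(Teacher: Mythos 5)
Your proposal is correct and follows essentially the same route as the paper: existence and uniqueness via the integrated form of the covariant pair $(i_B\circ\phi,i_G^\beta)$ (the paper simply cites \cite[Remark~A.8\,(4)]{enchilada}), the key module identity $(\phi\rtimes G)\bigl(c\,(i_A\circ\mu)(f)\bigr)=(\phi\rtimes G)(c)\,(i_B\circ\nu)(f)$, an appeal to \lemref{perturb hom} for the $\wilde\alpha-\wilde\beta$ equivariance, the generators $i_{C_0(G)}(f)i_G^{\rt}(d)$ of $\KK\simeq C_0(G)\rtimes_{\rt}G$ for \eqref{phi rtimes G k}, and a direct calculation on spanning elements for the composition clause. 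The only (immaterial) difference is in how the module identity is obtained: the paper Cohen-factors $c=c'\,i_A(b)$ and applies the hypothesis to $b\mu(f)$, whereas you commute $i_G^\alpha$ past $i_A\circ\mu$ via the covariance relation in the convolution algebra.
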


\begin{proof}
The first statement, up through \eqref{phi rtimes alpha}, is \cite[Remark~A.8 (4)]{enchilada},
the preservation of nondegeneracy is well-known,
and the 
last part, starting with ``Also'',
follows from direct calculation.
We must verify the $\wilde\alpha-\wilde\beta$ equivariance
and \eqref{phi rtimes G k}.
We first claim that for all
$c\in A\rtimes_\alpha G$,
$d\in C^*(G)$,
$a\in A$, and
$f\in C_0(G)$ we have
\begin{align}
(\phi\rtimes G)\bigl(c\,i_G^\alpha(d)\bigr)&=(\phi\rtimes G)(c)i_B^\beta(d)
\label{first}
\\
(\phi\rtimes G)\bigl(c\,i_A(a)\bigr)&=(\phi\rtimes G)(c)i_B\circ\phi(a)
\label{second}
\\
(\phi\rtimes G)\bigl(c\,i_A\circ\mu(f)\bigr)&=(\phi\rtimes G)(c)i_B\circ\nu(f).
\label{third}
\end{align}
\eqref{first}--\eqref{second}
follow by first replacing $c$ by appropriately chosen generators,
and to see 
\eqref{third}
we use nondegeneracy of $i_A$ and the Cohen factorization theorem to write
\[
c=c'\,i_A(b)\righttext{for}c'\in A\rtimes_\alpha G,b\in A,
\]
and then compute
\begin{align*}
(\phi\rtimes G)\bigl(c\,i_A\circ\mu(f)\bigr)
&=(\phi\rtimes G)\bigl(c'\,i_A(b)i_A\circ\mu(f)\bigr)
\\&=(\phi\rtimes G)\bigl(c'\,i_A(b\mu(f)\bigr)
\\&=(\phi\rtimes G)(c')i_B\circ\phi\bigl(b\mu(f)\bigr)
\\&=(\phi\rtimes G)(c')i_B\bigl(\phi(b)\nu(f)\bigr)
\\&=(\phi\rtimes G)(c')i_B\bigl(\phi(b)\bigr)i_B\bigl(\nu(f)\bigr)
\\&=(\phi\rtimes G)\bigl(c'i_A(b)\bigr)i_B\bigl(\nu(f)\bigr)
\\&=(\phi\rtimes G)(c)i_B\circ\nu(f).
\end{align*}
Combining \eqref{third} with the other hypotheses,
we can apply \lemref{perturb hom}
to conclude that $\phi\rtimes G$ is $\wilde\alpha-\wilde\beta$ equivariant.

For 
\eqref{phi rtimes G k},
it suffices to consider a generator
\[
k=i_{C_0(G)}(f)i_G^{\rt}(d)
\righttext{for}f\in C_0(G),d\in C^*(G),
\]
and then 
compute
\begin{align*}
(\phi\rtimes G)\bigl(c(\mu\rtimes G)(k)\bigr)
&=(\phi\rtimes G)\bigl(ci_A\circ\mu(f)i_G^\alpha(d)\bigr)
\\&=(\phi\rtimes G)\bigl(ci_A\circ\mu(f)\bigr)i_B^\beta(d)
\righttext{(by \eqref{first})}
\\&=(\phi\rtimes G)(c)i_B\circ\nu(f)i_B^\beta(d)
\righttext{(by \eqref{third})}
\\&=(\phi\rtimes G)(c)(\nu\rtimes G)(k).
\qedhere
\end{align*}
\end{proof}

Finally, we are ready for the third step
of the Fischer construction
for possibly degenerate homomorphisms:

\begin{lem}\label{com gen}
Let $(A,\delta,\iota)$ and $(B,\epsilon,\jmath)$ be $\KK$-coactions,
and let $\phi\:A\to M(B)$ be a possibly degenerate $\delta-\epsilon$ equivariant homomorphism such that
\[
\phi\bigl(a\iota(k)\bigr)=\phi(a)\jmath(k)\righttext{for all}a\in A,k\in\KK.
\]
Then there is a unique \(possibly degenerate\) homomorphism
\[
C(\phi)\:C(A,\iota)\to M(C(B,\jmath))
\]
making the diagram
\begin{equation}\label{C phi}
\xymatrix{
C(A,\iota)\otimes\KK \ar[r]^-{\theta_A}_-\simeq \ar[d]_{C(\phi)\otimes\id}
&A \ar[d]^\phi
\\
M(C(B,\jmath)\otimes\KK) \ar[r]_-{\theta_B}^-\simeq
&M(B)
}
\end{equation}
commute.
Moreover, $C(\phi)$
is nondegenerate if $\phi$ is, and
is $C(\delta)-C(\epsilon)$ equivariant.
Also, if $\phi(A)\subset B$ then $C(\phi)(C(A,\iota))\subset C(B,\jmath)$.
Finally,
given a third $\KK$-coaction
$(C,\zeta,\omega)$
and a possibly degenerate $\epsilon-\zeta$ equivariant homomorphism
$\psi\:B\to M(C)$ satisfying
$\psi(b\jmath(k))=\psi(b)\omega(k)$ for all $b\in B$ and $k\in\KK$,
if 
either 
$\phi(A)\subset B$
or $\psi$ is nondegenerate
then
\begin{equation}\label{C circ C}
C(\psi)\circ C(\phi)=C(\psi\circ\phi).
\end{equation}
\end{lem}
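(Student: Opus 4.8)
The plan is to imitate Fischer's construction of $C(\phi)$ for nondegenerate homomorphisms (the $\KK$-algebra diagram recalled above), replacing the now-meaningless relation $\phi\circ\iota=\jmath$ by the hypothesis $\phi(a\iota(k))=\phi(a)\jmath(k)$ and supplying a destabilization step valid for possibly degenerate maps; I write $\bar{\theta_A},\bar{\theta_B},\bar{\theta_C}$ for the canonical extensions of the isomorphisms $\theta_A,\theta_B,\theta_C$ to multiplier algebras. Consider the possibly degenerate homomorphism $\Phi:=\bar{\theta_B}^{-1}\circ\phi\circ\theta_A\:C(A,\iota)\otimes\KK\to M(C(B,\jmath)\otimes\KK)$. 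The hypothesis enters essentially once: for $a\in C(A,\iota)$ and $k,k'\in\KK$ one has $\theta_A(a\otimes kk')=\bigl(\theta_A(a\otimes k)\bigr)\iota(k')$ in $A$, so applying the hypothesis with $\theta_A(a\otimes k)\in A$ in place of $a$ and then $\bar{\theta_B}^{-1}$ — noting $\bar{\theta_B}^{-1}(\jmath(k'))=1\otimes k'$ — gives $\Phi(a\otimes kk')=\Phi(a\otimes k)(1\otimes k')$; by linearity, continuity and passing to adjoints this upgrades to $\Phi\bigl(x(1\otimes k')\bigr)=\Phi(x)(1\otimes k')$ and $\Phi\bigl((1\otimes k')x\bigr)=(1\otimes k')\Phi(x)$ for all $x\in C(A,\iota)\otimes\KK$, $k'\in\KK$. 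Fix a rank-one projection $p=e_{11}$ from a system of matrix units $\{e_{ij}\}$ spanning a dense subalgebra of $\KK$; the displayed relations give $\Phi(a\otimes p)=(1\otimes p)\Phi(a\otimes p)(1\otimes p)$, so identifying the full corner $(1\otimes p)M(C(B,\jmath)\otimes\KK)(1\otimes p)$ with $M(C(B,\jmath))$ (destabilization, as in \cite{koqstable}), the assignment $C(\phi)\:a\mapsto\Phi(a\otimes p)$ is a homomorphism $C(A,\iota)\to M(C(B,\jmath))$, and, using $e_{ij}=e_{i1}pe_{1j}$ together with the bimodule relations above, $\Phi(a\otimes e_{ij})=(C(\phi)\otimes\id)(a\otimes e_{ij})$; hence $\Phi=C(\phi)\otimes\id$, which is exactly the commutativity of \eqref{C phi}. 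Uniqueness of $C(\phi)$ follows since $\theta_A$ is onto and $\bar{\theta_B}$ injective — equivalently, $\rho\otimes\id$ recovers $\rho$ upon compression by $1\otimes p$.

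The remaining assertions before the composability statement are then short. If $\phi$ is nondegenerate then $\Phi$ is too (a composite of nondegenerate maps and isomorphisms), hence so is $C(\phi)$. If $\phi(A)\subset B$ then $\Phi$ carries $C(A,\iota)\otimes\KK=\theta_A^{-1}(A)$ into $\theta_B^{-1}(\phi(A))\subset C(B,\jmath)\otimes\KK$, which forces $C(\phi)(C(A,\iota))\subset C(B,\jmath)$. For $C(\delta)-C(\epsilon)$ equivariance I would use \propref{equivariant}: from $\delta\circ\iota=\iota\otimes1$ one checks that $\theta_A$, and likewise $\theta_B$, intertwines the $B(G)$-module structure on $C(A,\iota)\otimes\KK$ carried by the first leg with that on $A$; since $\phi$ is then a module map by \propref{equivariant}, the commuting square \eqref{C phi} yields $C(\phi)(f\cdot a)\,\jmath(k)=\bigl(f\cdot C(\phi)(a)\bigr)\jmath(k)$ for all $k\in\KK$, and cancelling $\jmath(k)$ using nondegeneracy of $\jmath$ shows $C(\phi)$ is a module map, hence $C(\delta)-C(\epsilon)$ equivariant by \propref{equivariant} again.

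Finally, for the composability \eqref{C circ C}: under either alternative ($\phi(A)\subset B$, or $\psi$ nondegenerate) the composite $\psi\circ\phi\:A\to M(C)$ is a legitimate homomorphism, and it satisfies $(\psi\circ\phi)(a\iota(k))=(\psi\circ\phi)(a)\omega(k)$ — directly when $\phi(a)\in B$, and when $\psi$ is nondegenerate after extending $\psi$ and observing $\bar\psi\circ\jmath=\omega$ (cancel $\psi(b)$ on the left in $\psi(b\jmath(k))=\psi(b)\omega(k)$) — so $C(\psi\circ\phi)$ is defined. By the uniqueness just proved it suffices to see that $C(\psi)\circ C(\phi)$ makes \eqref{C phi} for $\psi\circ\phi$ commute. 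The same two alternatives are exactly what is needed to form $(C(\psi)\otimes\id)\circ(C(\phi)\otimes\id)$: in the first case $C(\phi)(C(A,\iota))\subset C(B,\jmath)$, so $C(\phi)\otimes\id$ takes values in $C(B,\jmath)\otimes\KK$; in the second $C(\psi)$ is nondegenerate, so $C(\psi)\otimes\id$ extends to multiplier algebras. Either way $(C(\psi)\circ C(\phi))\otimes\id=(C(\psi)\otimes\id)\circ(C(\phi)\otimes\id)$, so that
\[
\theta_C\circ\bigl((C(\psi)\circ C(\phi))\otimes\id\bigr)
=\psi\circ\theta_B\circ(C(\phi)\otimes\id)
=\psi\circ\phi\circ\theta_A
\]
by the already-established instances of \eqref{C phi} for $C(\psi)$ and $C(\phi)$, and this is the defining property of $C(\psi\circ\phi)$. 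I expect the main obstacle to be bookkeeping rather than any single computation: keeping straight which canonical extensions to multiplier algebras are available for the possibly degenerate maps in play, and verifying that the two listed alternatives are precisely what legitimizes both $\psi\circ\phi$ and the composite $(C(\psi)\otimes\id)\circ(C(\phi)\otimes\id)$. The one genuinely new ingredient beyond the nondegenerate case is the destabilization step establishing $\Phi=C(\phi)\otimes\id$.
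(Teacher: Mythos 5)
Your proof is correct, but the core construction takes a genuinely different route from the paper's. The paper invokes \cite[Lemma~A.5]{dkq} to extend $\phi$ to a homomorphism $\bar\phi\:M_\KK(A)\to M(B)$ continuous from the $\KK$-strict to the strict topology, sets $C(\phi)=\bar\phi|_{C(A,\iota)}$, and gets commutativity of \eqref{C phi} from the one-line computation $\bar\phi(m)\jmath(k)=\phi(m\iota(k))$ (valid by $\KK$-strict continuity); you instead work entirely on the stabilized side, showing that $\Phi=\bar{\theta_B}\inv\circ\phi\circ\theta_A$ commutes with $1\otimes\KK$ and then destabilizing by compression to the corner $(1\otimes p)M(C(B,\jmath)\otimes\KK)(1\otimes p)\cong M(C(B,\jmath))$. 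Both are sound; the paper's version is shorter because the extension lemma does the analytic work up front, while yours is self-contained modulo the standard corner identification from \cite{koqstable}. The difference is most visible in the composability claim: the paper chases nets to verify $\bar\psi\circ\bar\phi=\bar{\psi\circ\phi}$ on $M_\KK(A)$ (in the $\phi(A)\subset B$ case) or appeals to the canonical extension of $\psi$ (in the nondegenerate case), whereas you check that $C(\psi)\circ C(\phi)$ satisfies the defining diagram for $\psi\circ\phi$ and invoke uniqueness --- which is cleaner, and correctly isolates the two alternatives as exactly what legitimizes forming $(C(\psi)\otimes\id)\circ(C(\phi)\otimes\id)$. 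Your treatments of equivariance (via \propref{equivariant} and cancelling $\jmath(k)$ by nondegeneracy of $\jmath$) and of the clause $\phi(A)\subset B$ match the paper's in substance.
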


\begin{proof}
By \cite[Lemma~A.5]{dkq} $\phi$ extends uniquely to a homomorphism
\[
\bar\phi\:M_\KK(A)\to M(B)
\]
that is continuous from the $\KK$-strict topology to the strict topology.
Since $C(A,\iota)\subset M_\KK(A)$, we can define
\[
C(\phi)=\bar\phi|_{C(A,\iota)}.
\]
We will show that the diagram~\eqref{C phi} commutes,
and then the uniqueness will be obvious.
For $m\in C(A,\iota)$ and $k\in\KK$ we have
\begin{align*}
\theta_B\circ(C(\phi)\otimes\id)(m\otimes k)
&=\theta_B\bigl(\bar\phi(m)\otimes k\bigr)
\\&=\bar\phi(m)\jmath(k)
\\&\overset{*}=\phi\bigl(m\iota(k)\bigr)
\\&=\phi\circ\theta_A\bigl(m\otimes k\bigr),
\end{align*}
where the equality at $*$ follows from $\KK$-strict to strict continuity.
The preservation of nondegeneracy is proven in \cite[Theorem~4.4]{koqstable},
and follows from a routine approximate-identity argument.

For the equivariance,
let $f\in B(G)$, $m\in C(A,\iota)$, and $k\in\KK$.
Since $C(A,\iota)$ is a $B(G)$-submodule of $M(A)$,
we can compute as follows:
\begin{align*}
C(\phi)(f\cdot m)\jmath(k)
&=\bar\phi(f\cdot m)\jmath(k)
\righttext{(since $C(\phi)=\bar\phi|_{C(A,\iota)}$)}
\\&=\phi\bigl((f\cdot m)\iota(k)\bigr)
\righttext{(by \cite[Lemma~A.5]{dkq})}
\\&=\phi\Bigl(f\cdot \bigl(m\iota(k)\bigr)\Bigr)
\righttext{(since $\delta\circ\iota=\iota\otimes 1$)}
\\&=f\cdot \phi\bigl(m\iota(k)\bigr)
\righttext{(by \propref{equivariant})}
\\&=f\cdot \bigl(\bar\phi(m)\jmath(k)\bigr)
\\&=f\cdot \bigl(\bar\phi(m)\bigr)\jmath(k)
\\&=f\cdot \bigl(C(\phi)(m)\bigr)\jmath(k).
\end{align*}
Thus $C(\phi)(f\cdot m)=f\cdot C(\phi)(m)$ since $\jmath\:\KK\to M(B)$ is nondegenerate, and hence $\phi$ is equivariant by \propref{equivariant}.

Now suppose that $\phi(A)\subset B$.
Then for all $m\in C(A,\iota)$ and $k\in \KK$ we have
\begin{align*}
C(\phi)(m)\jmath(k)
&=\bar\phi(m)\jmath(k)
\\&=\phi\bigl(m\iota(k)\bigr)
\\&=\phi\bigl(\iota(k)m\bigr)
\\&=\jmath(k)\bar\phi(m)
\\&=\jmath(k)C(\phi)(m),
\end{align*}
which is an element of $B$ since $m\iota(k)\in A$.

The final statement, regarding composition, seems to not be recorded in the literature, so we give the proof
here.
First 
suppose that $\phi(A)\subset B$.
Then by \cite[Lemma~A.5]{dkq} the extension $\bar\phi$ maps $M_\KK(A)$ into $M_\KK(B)$ and is continuous for the $\KK$-strict topologies.
Also, $\bar\psi\:M_\KK(B)\to M(C)$ is continuous from the $\KK$-strict topology to the strict topology.
Let $\{a_i\}$ be a net in $A$ converging $\KK$-strictly to $m\in M_\KK(A)$.
Then $\phi(a_i)\to \bar\phi(m)$ $\KK$-strictly in $M_\KK(B)$,
and so
\[
\psi(\phi(a_i))\to \bar\psi(\bar\phi(m))\righttext{strictly in $M(C)$.}
\]
On the other hand, the composition
\[
\bar\psi\circ\bar\phi\:M_\KK(A)\to M(C)
\]
is continuous from the $\KK$-strict topology to the strict topology,
so
\[
\bar{\psi\circ\phi}(a_i)
\to \bar{\psi\circ\phi}(m).
\]
Since $\psi(\phi(a_i))=(\psi\circ\phi)(a_i)$ for all $i$,
we conclude that
\[
\bar\psi\circ\bar\phi(m)=\bar{\psi\circ\phi}(m).
\]
Since $C(\phi)$ and $C(\psi)$ are the restrictions to the relative commutants $C(A,\iota)$ and $C(B,\jmath)$, respectively, we get $C(\psi\circ\phi)=C(\psi)\circ C(\phi)$.

For the other case, where $\psi$ is nondegenerate,
we use the canonical extension of $\psi$ to $M(B)$ to compose, getting
a $\delta-\zeta$ equivariant homomorphism $\psi\circ\phi\:A\to M(C)$ such that
\[
(\psi\circ\phi)\bigl(a\iota(k)\bigr)=(\psi\circ\phi)(a)\omega(k)
\righttext{for all}a\in A,k\in\KK,
\]
so that $C(\psi\circ\phi)$ makes sense.
Since $C(\phi)$ is computed by restricting the canonical extension $\bar\phi\:M_\KK(A)\to M(B)$, and similarly for $C(\psi\circ\phi)$, and since we can compute the extension of $\psi$ on all of $M(B)$,
Equation~\eqref{C circ C} follows.
\end{proof}

We are now ready to maximalize possibly degenerate homomorphisms:

\begin{thm}\label{max gen}
Let $(A,\delta)$ and $(B,\epsilon)$ be coactions,
and let $\phi\:A\to M(B)$ be a possibly degenerate $\delta-\epsilon$ equivariant homomorphism.
Then there is a
unique \(possibly degenerate\) homomorphism $\phi^m\:A^m\to M(B^m)$
making
the diagram
\begin{equation}\label{phi m id}
\xymatrix{
&A\rtimes_\delta G\rtimes_{\what\delta} G \ar[dr]^{\Phi_A} \ar'[d][dd]^{\phi\rtimes G\rtimes G}
\\
A^m\otimes\KK \ar[ur]^(.4){\theta_{A\rtimes_\delta G\rtimes_{\what\delta} G}}_\simeq
\ar[rr]_(.3){\psi_A\otimes\id} \ar@{-->}[dd]_{\phi^m\otimes\id}
&&A\otimes\KK \ar[dd]^{\phi\otimes\id}
\\
&M(B\rtimes_\epsilon G\rtimes_{\what\epsilon} G) \ar[dr]^{\Phi_B}
\\
M(B^m\otimes\KK) \ar[ur]^\simeq_(.6){\theta_{B\rtimes_\epsilon G\rtimes_{\what\epsilon} G}}
\ar[rr]_-{\psi_B\otimes\id}
&&M(B\otimes\KK)
}
\end{equation}
commute,
where $\psi_A\:(A^m,\delta^m)\to (A,\delta)$ is the maximalization \(and similarly for $\psi_B$\).
Moreover,
$\phi^m$ is nondegenerate if $\phi$ is,
the diagram
\begin{equation}\label{phi m}
\xymatrix@C+20pt{
A^m \ar[r]^-{\phi^m} \ar[d]_{\psi_A}
&M(B^m) \ar[d]^{\psi_B}
\\
A \ar[r]_-\phi
&M(B)
}
\end{equation}
also commutes, and
$\phi^m$
is $\delta^m-\epsilon^m$ equivariant.
Also, if $\phi(A)\subset B$ then $\phi^m(A^m)\subset B^m$.
Finally,
given a third coaction
$(C,\zeta)$ and a possibly degenerate $\epsilon-\zeta$ equivariant homomorphism
$\pi\:B\to M(C)$,
if
either 
$\phi(A)\subset B$
or $\pi$ is nondegenerate
then
\[
(\pi\circ\phi)^m=\pi^m\circ\phi^m.
\]
\end{thm}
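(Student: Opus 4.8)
The plan is to run $\phi$ through the three stages of the Fischer construction, applying \lemref{coact gen}, \lemref{act gen}, and \lemref{com gen} in turn, and then to assemble. First I would apply \lemref{coact gen} to get the $\what\delta-\what\epsilon$ equivariant homomorphism $\phi\rtimes G\:A\rtimes_\delta G\to M(B\rtimes_\epsilon G)$. To feed this into \lemref{act gen}, for the equivariant actions $(A\rtimes_\delta G,\what\delta,j_G^\delta)$ and $(B\rtimes_\epsilon G,\what\epsilon,j_G^\epsilon)$, I must check that $(\phi\rtimes G)\bigl(c\,j_G^\delta(f)\bigr)=(\phi\rtimes G)(c)\,j_G^\epsilon(f)$ for $f\in C_0(G)$, which is immediate from the generator formula~\eqref{phi rtimes delta}. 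Then \lemref{act gen} yields the $\wilde{\what\delta}-\wilde{\what\epsilon}$ equivariant homomorphism $(\phi\rtimes G)\rtimes G\:A\rtimes_\delta G\rtimes_{\what\delta}G\to M(B\rtimes_\epsilon G\rtimes_{\what\epsilon}G)$, which by~\eqref{phi rtimes G k} satisfies $\bigl((\phi\rtimes G)\rtimes G\bigr)\bigl(c\,(j_G^\delta\rtimes G)(k)\bigr)=\bigl((\phi\rtimes G)\rtimes G\bigr)(c)\,(j_G^\epsilon\rtimes G)(k)$ for $k\in\KK$. This is precisely the hypothesis of \lemref{com gen} for the $\KK$-coactions $(A\rtimes_\delta G\rtimes_{\what\delta}G,\wilde{\what\delta},j_G^\delta\rtimes G)$ and $(B\rtimes_\epsilon G\rtimes_{\what\epsilon}G,\wilde{\what\epsilon},j_G^\epsilon\rtimes G)$, so \lemref{com gen} produces
\[
\phi^m:=C\bigl((\phi\rtimes G)\rtimes G\bigr)\:A^m\to M(B^m).
\]
Nondegeneracy of $\phi^m$ when $\phi$ is nondegenerate, the equivariance of $\phi^m$ for $\delta^m$ and $\epsilon^m$ (using $\delta^m=C(\wilde{\what\delta})$), and the implication $\phi(A)\subset B\Rightarrow\phi^m(A^m)\subset B^m$ then all follow by chaining the corresponding clauses of the three lemmas.

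Next I would verify that~\eqref{phi m id} commutes. With $\phi^m$ defined as above, the rectangle of~\eqref{phi m id} built from the two maps $\theta$ is precisely the defining square~\eqref{C phi} of \lemref{com gen}, hence commutes; since the maps $\theta$ are isomorphisms, this rectangle alone determines $\phi^m$, which gives the uniqueness assertion. The top and bottom triangles are the defining relations $\psi_A\otimes\id=\Phi_A\circ\theta_{A\rtimes_\delta G\rtimes_{\what\delta}G}$ and $\psi_B\otimes\id=\Phi_B\circ\theta_{B\rtimes_\epsilon G\rtimes_{\what\epsilon}G}$ of the maximalizations (extended to multipliers), recalled in the Fischer construction above. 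The one remaining face is the rectangle built from $\Phi_A$ and $\Phi_B$, i.e.\ naturality of the canonical surjection:
\[
(\phi\otimes\id)\circ\Phi_A=\Phi_B\circ(\phi\rtimes G\rtimes G);
\]
for nondegenerate $\phi$ this is part of Fischer's construction, and for possibly degenerate $\phi$ I would verify it by the same computation on the canonical generators $i_{A\rtimes_\delta G}\bigl(j_A(a)j_G^\delta(f)\bigr)i_G^{\what\delta}(z)$ of $A\rtimes_\delta G\rtimes_{\what\delta}G$, using the generator formulas~\eqref{phi rtimes delta} and~\eqref{phi rtimes alpha} for $\phi\rtimes G\rtimes G$ together with the defining formula for $\Phi_A$. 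Once every face of~\eqref{phi m id} commutes, the commutativity of~\eqref{phi m} follows by the same diagram chase as in the nondegenerate case.

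For the composition statement I would follow the composite through the three stages, tracking the dichotomy ``$\phi(A)\subset B$ or $\pi$ nondegenerate''. If $\pi$ is nondegenerate, then $\pi\rtimes G$ and $(\pi\rtimes G)\rtimes G$ are nondegenerate by \lemref{coact gen} and \lemref{act gen}, so the composition clauses of the three lemmas apply in succession to give $(\pi\rtimes G)\circ(\phi\rtimes G)=(\pi\circ\phi)\rtimes G$, then $\bigl((\pi\rtimes G)\rtimes G\bigr)\circ\bigl((\phi\rtimes G)\rtimes G\bigr)=\bigl((\pi\circ\phi)\rtimes G\bigr)\rtimes G$, and finally, on applying $C(\cdot)$, the desired identity $\pi^m\circ\phi^m=(\pi\circ\phi)^m$. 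If instead $\phi(A)\subset B$, then $(\phi\rtimes G)(A\rtimes_\delta G)\subset B\rtimes_\epsilon G$ by \lemref{coact gen}, hence $\bigl((\phi\rtimes G)\rtimes G\bigr)(A\rtimes_\delta G\rtimes_{\what\delta}G)\subset B\rtimes_\epsilon G\rtimes_{\what\epsilon}G$ by \lemref{act gen}, so at each stage the ``image-contained'' alternative of the composition clause is in force and the same chain of identities goes through. A point to watch is that when $\pi$ is degenerate the composite $\pi\circ\phi$ is formed using $\phi(A)\subset B$, and when $\pi$ is nondegenerate it is formed via the canonical extension $\bar\pi\:M(B)\to M(C)$; in either case $\pi\circ\phi$ is a possibly degenerate $\delta-\zeta$ equivariant homomorphism, so $(\pi\circ\phi)^m$ is defined and the argument applies.

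The step I expect to require the most care is the rectangle built from $\Phi_A$ and $\Phi_B$ in~\eqref{phi m id}, namely naturality of the canonical surjection $\Phi$ for \emph{possibly degenerate} equivariant homomorphisms: this is the one ingredient not handed to us by \lemref{coact gen}, \lemref{act gen}, or \lemref{com gen}, so it must be checked directly on the generators of the double crossed product. I expect that check to be routine, so that the bulk of the remaining work is bookkeeping --- confirming the inter-stage compatibility hypotheses (which are exactly what~\eqref{phi rtimes delta} and~\eqref{phi rtimes G k} supply) and carrying the ``$\phi(A)\subset B$ or $\pi$ nondegenerate'' dichotomy cleanly through all three composition clauses.
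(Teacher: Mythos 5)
Your proposal is correct and follows essentially the same route as the paper: define $\phi^m=C\bigl((\phi\rtimes G)\rtimes G\bigr)$ by chaining Lemmas~\ref{coact gen}, \ref{act gen}, and \ref{com gen} (checking the inter-stage compatibility hypotheses exactly as you describe), verify the naturality rectangle for $\Phi$ by direct computation on generators, and track the dichotomy ``$\phi(A)\subset B$ or $\pi$ nondegenerate'' through the composition clauses of the three lemmas. The paper's proof is the same argument, with the $\Phi$-naturality square likewise dispatched as a direct computation.
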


\begin{proof}
The right-rear rectangle
in the diagram~\eqref{phi m id}
(involving $A\rtimes G\rtimes G$ and $A\otimes\KK$)
commutes by direct computation.

Now,
$(A\rtimes_\delta G,\what\delta,j_G^\delta)$
and
$(B\rtimes_\epsilon G,\what\epsilon,j_G^\epsilon)$
are equivariant actions.
By \lemref{coact gen} 
the homomorphism
\[
\phi\rtimes G\:A\rtimes_\delta G\to M(B\rtimes_\epsilon G)
\]
is $\what\delta-\what\epsilon$ equivariant
and satisfies
\[
(\phi\times G)\bigl(cj_G^\delta(f)\bigr)=(\phi\rtimes G)(c)j_G^\epsilon(f)
\righttext{for all}c\in A\rtimes_\delta G,f\in C_0(G).
\]
Thus, by 
\lemref{act gen}
the homomorphism
\[
\phi\rtimes G\rtimes G\:A\rtimes_\delta G\rtimes_{\what\delta} G
\to M(B\rtimes_\epsilon G\rtimes_{\what\epsilon} G)
\]
is $\wilde\delta-\wilde\epsilon$ equivariant
and satisfies
\[
(\phi\rtimes G\rtimes G)\bigl(c(j_G^\delta\rtimes G)(k)\bigr)
=(\phi\rtimes G\rtimes G)(c)(j_G^\epsilon\rtimes G)(k)
\]
for all $c\in A\rtimes_\delta G\rtimes_{\what\delta} G$ and $k\in\KK$.
Furthermore,
$(A\rtimes_\delta G\rtimes_{\what\delta} G,\wilde\delta,j_G^\delta\rtimes G)$
and
$(B\rtimes_\epsilon G\rtimes_{\what\epsilon} G,\wilde\epsilon,j_G^\epsilon\rtimes G)$
are $\KK$-coactions.
Thus, by \lemref{com gen}
the homomorphism
\[
C(\phi\rtimes G\rtimes G)\:C(A\rtimes_\delta G\rtimes_{\what\delta} G,j_G^\delta\rtimes G)
\to M\bigl(C(B\rtimes_\epsilon G\rtimes_{\what\epsilon} G,j_G^\epsilon\rtimes G)\bigr)
\]
makes the diagram
\[
\xymatrix@C+30pt{
C(A\rtimes_\delta G\rtimes_{\what\delta} G,j_G^\delta\rtimes G)\otimes\KK
\ar[r]^-{\theta_{A\rtimes_\delta G\rtimes_{\what\delta} G}}_-\simeq
\ar[d]_{C(\phi\rtimes G\rtimes G)\otimes\id}
&A\rtimes_\delta G\rtimes_{\what\delta} G
\ar[d]^{\phi\rtimes G\rtimes G}
\\
M\bigl(C(B\rtimes_\epsilon G\rtimes_{\what\epsilon} G,j_G^\epsilon\rtimes G)\otimes\KK\bigr)
\ar[r]_-{\theta_{B\rtimes_\epsilon G\rtimes_{\what\epsilon} G}}^-\simeq
&M(B\rtimes_\epsilon G\rtimes_{\what\epsilon} G)
}
\]
commute.
Since
\[
A^m=C(A\rtimes_\delta G\rtimes_{\what\delta} G,i_{A\rtimes_\delta G}\circ j_G^\delta),
\]
by \lemref{com gen} we can define
\[
\phi^m=C(\phi\rtimes G\rtimes G),
\]
which is then the unique homomorphism making the left-rear rectangle
in the diagram~\eqref{phi m id}
(involving $A^m\otimes\KK$ and $A\rtimes G\rtimes G$)
commute.
The preservation of nondegeneracy follows immediately from the corresponding properties of the functors whose composition is $\phi\mapsto \phi^m$.
Then the front rectangle
(involving $A^m\otimes\KK$ and $A\otimes\KK$)
commutes,
and hence so does the diagram \eqref{phi m}.
Moreover, since $\delta^m=C(\delta)$ and $\epsilon^m=C(\epsilon)$, by \lemref{com gen} again we see that $\phi^m$ is $\delta^m-\epsilon^m$ equivariant.

For the final statement, involving composition,
suppose that we have $C$, $\zeta$, and 
$\pi$.
We consider the two cases separately:
first of all,
assume that $\phi(A)\subset B$.
Then from
\lemref{coact gen}
we conclude that
that
the equivariant actions
\begin{align*}
&(A\rtimes_\delta G,\what\delta,j_G^\delta)
\\
&(B\rtimes_\epsilon G,\what\epsilon,j_G^\epsilon)
\\
&(C\rtimes_\zeta G,\what\zeta,j_G^\zeta)
\end{align*}
and the homomorphisms
\begin{align*}
\phi\rtimes G\:&A\rtimes_\delta G\to B\rtimes_\epsilon G
\\
\pi\rtimes G\:&B\rtimes_\epsilon G\to M(C\rtimes_\zeta G)
\end{align*}
satisfy the hypotheses of 
\lemref{act gen}.
Thus, 
\lemref{act gen}
now tells us that
the $\KK$-coactions
\begin{align*}
&(A\rtimes_\delta G\rtimes_{\what\delta} G,\wilde\delta,j_G^\delta\rtimes G)
\\
&(B\rtimes_\epsilon G\rtimes_{\what\epsilon} G,\wilde\epsilon,j_G^\epsilon\rtimes G)
\\
&(C\rtimes_\zeta G\rtimes_{\what\zeta} G,\wilde\zeta,j_G^\zeta\rtimes G)
\end{align*}
and the homomorphisms
\begin{align*}
\phi\rtimes G\rtimes G\:&A\rtimes_\delta G\rtimes_{\what\delta} G\to B\rtimes_\epsilon G\rtimes_{\what\epsilon} G
\\
\pi\rtimes G\rtimes G\:&B\rtimes_\epsilon G\rtimes_{\what\epsilon} G\to M(C\rtimes_\zeta G\rtimes_{\what\zeta} G)
\end{align*}
satisfy the hypotheses of \lemref{com gen},
and hence, by construction of the maximalizations
$\delta^m,\epsilon^m,\zeta^m$ of $\delta,\epsilon,\zeta$,
we get
\[
\pi^m\circ\phi^m=(\pi\circ\phi)^m.
\]
On the other hand, if we assume that $\pi$ is nondegenerate instead of $\phi(A)\subset B$,
the argument proceeds similarly, except we keep tacitly using the canonical extension to multiplier algebras of any homomorphism constructed from $\pi$.
\end{proof}

\begin{rem}\label{max classical}
\thmref{max gen} gives a precise justification that the assignments
\begin{align*}
(A,\delta)&\mapsto (A^m,\delta^m)
\\
\phi&\mapsto \phi^m
\end{align*}
define a functor on the classical category of coactions.
\end{rem}

\section{Generalized homomorphisms}\label{gen hom}

\begin{defn}\label{gen def}
We say that a coaction functor $\tau$ is \emph{functorial for generalized homomorphisms} if whenever $(A,\delta)$ and $(B,\epsilon)$ are coactions and $\phi\:A\to M(B)$ is a possibly degenerate $\delta-\epsilon$ equivariant homomorphism
there is a (necessarily unique) possibly degenerate homomorphism $\phi^\tau$ making the following diagram commute:
\begin{equation}\label{gen def diag}
\xymatrix@C+20pt{
A^m \ar[r]^-{\phi^m} \ar[d]_{q_A^\tau}
&M(B^m) \ar[d]^{q_B^\tau}
\\
A^\tau \ar@{-->}[r]_-{\phi^\tau}
&M(B^\tau).
}
\end{equation}
Note that the existence of the homomorphism $\phi^m$ is guaranteed by \thmref{max gen}.
If $\phi^\tau$ is only presumed to exist when $\phi$ is nondegenerate, then we say that $\tau$ is \emph{functorial for nondegenerate homomorphisms}.
Note that if $\tau$ is functorial for generalized homomorphisms, it automatically sends nondegenerate homomorphisms to nondegenerate homomorphisms. This follows immediately from the corresponding property for the maximalization functor $A\mapsto A^m$.
\end{defn}

\begin{rem}
Let $\tau$ be a coaction functor,
and let $\cp^\tau$ be 
the associated crossed-product functor for actions,
given by full crossed product followed by $\tau$. If $\tau$ is functorial for generalized homomorphisms, then 
$\cp^\tau$
is also functorial for generalized homomorphisms in the sense
of \cite[Definition~3.1]{bew},
--- see \cite[paragraph following Definition~3.1]{bew}.
\end{rem}

Thus, a coaction functor $\tau$ is functorial for generalized homomorphisms if and only if for every possibly degenerate $\delta-\epsilon$ equivariant homomorphism $\phi\:A\to M(B)$ we have
\[
\ker q^\tau_A\subset \ker q^\tau_B\circ\phi^m,
\]
and similarly for nondegenerate functoriality.

\begin{ex}\label{max is gen}
The maximalization functor is functorial for generalized homomorphisms, by \thmref{max gen}.
Thus the identity functor $\id$ is functorial for generalized homomorphisms,
since we can take $q^{\id}_A=\psi_A$ and $\phi^{\id}=\phi$.
\end{ex}

\begin{rem}
Suppose that $\tau$ is functorial for generalized homomorphisms, and that $\phi\:A\to B$ is $\delta-\epsilon$ equivariant. Then the map $\phi^\tau$ vouchsafed by \defnref{gen def} agrees with the one that we get by the assumption that $\tau$ is a coaction functor.
In particular, if $\iota\:A\hookrightarrow M(A)$ is the canonical embedding then $\iota^\tau$ coincides with the canonical embedding $A^\tau\hookrightarrow M(A^\tau)$.
\end{rem}

\begin{lem}\label{compose gen}
Let $\tau$ be a coaction functor that is functorial for generalized homomorphisms,
let $(A,\delta)$, $(B,\epsilon)$, and $(C,\zeta)$ be coactions,
and let $\phi\:A\to M(B)$ and $\psi\:B\to M(C)$ be possibly degenerate equivariant homomorphisms.
If either $\phi(A)\subset B$
or $\psi$ is nondegenerate, 
then $(\psi\circ\phi)^\tau=\psi^\tau\circ\phi^\tau$.
\end{lem}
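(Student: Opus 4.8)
The plan is to reduce the claim to the defining squares \eqref{gen def diag} for $\phi^\tau$, $\psi^\tau$, and $(\psi\circ\phi)^\tau$, together with the composition identity $(\psi\circ\phi)^m=\psi^m\circ\phi^m$ furnished by \thmref{max gen}, and then to chase a diagram on $A^m$, using that $q^\tau_A\:A^m\to A^\tau$ is surjective.

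First I would check that both sides of the asserted equation make sense. In the case $\phi(A)\subset B$ the composite $\psi\circ\phi\:A\to M(C)$ is an ordinary composition, and in the case that $\psi$ is nondegenerate it is formed using the canonical extension $\bar\psi\:M(B)\to M(C)$; either way it is a possibly degenerate $\delta-\zeta$ equivariant homomorphism (equivariance by \propref{equivariant}), so $(\psi\circ\phi)^\tau$ is provided by \defnref{gen def}. For the right-hand side, if $\phi(A)\subset B$ then $\phi^m(A^m)\subset B^m$ by \thmref{max gen}, and since $q^\tau_A$ is surjective and \eqref{gen def diag} commutes, $\phi^\tau(A^\tau)=q^\tau_B(\phi^m(A^m))\subset q^\tau_B(B^m)=B^\tau$, so $\psi^\tau\circ\phi^\tau$ is an honest composition; if instead $\psi$ is nondegenerate, then $\psi^\tau$ is nondegenerate (as observed after \defnref{gen def}), so $\psi^\tau\circ\phi^\tau$ makes sense via the canonical extension of $\psi^\tau$ to $M(B^\tau)$.

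The heart of the matter is then the computation
\[
(\psi\circ\phi)^\tau\circ q^\tau_A
=q^\tau_C\circ(\psi\circ\phi)^m
=q^\tau_C\circ\psi^m\circ\phi^m
=\psi^\tau\circ q^\tau_B\circ\phi^m
=\psi^\tau\circ\phi^\tau\circ q^\tau_A,
\]
where the outer equalities are the defining diagrams \eqref{gen def diag} for $(\psi\circ\phi)^\tau$ and for $\phi^\tau$, the middle-left equality is \thmref{max gen}, and the middle-right equality is the defining diagram for $\psi^\tau$. Since $q^\tau_A$ is surjective, this yields $(\psi\circ\phi)^\tau=\psi^\tau\circ\phi^\tau$.

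The step I expect to be the main obstacle is the bookkeeping of nondegeneracy and the attendant canonical extensions in the middle equalities when $\phi(A)\not\subset B$: there $\phi^m$ takes values in $M(B^m)$ rather than $B^m$, so the identity $q^\tau_C\circ\psi^m=\psi^\tau\circ q^\tau_B$, which \defnref{gen def} asserts on $B^m$, must be applied on $M(B^m)$. This is legitimate because in that case $\psi$, hence $\psi^m$ (by \thmref{max gen}) and $\psi^\tau$, are nondegenerate, so both sides are nondegenerate homomorphisms out of $B^m$ agreeing on $B^m$, and therefore have the same canonical extension to $M(B^m)$; the identity $(\psi\circ\phi)^m=\psi^m\circ\phi^m$ from \thmref{max gen} is precisely the statement that the canonical extension of $\psi^m$ postcomposes $\phi^m$ correctly. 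When $\phi(A)\subset B$ every map occurring in the computation restricts to a homomorphism between the algebras themselves, no extensions are needed, and the diagram chase is literal. Once this is sorted out the argument is purely formal.
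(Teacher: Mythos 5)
Your proof is correct and follows essentially the same route as the paper's: both use the defining squares for $\phi^\tau$, $\psi^\tau$, and $(\psi\circ\phi)^\tau$, the identity $(\psi\circ\phi)^m=\psi^m\circ\phi^m$ from \thmref{max gen}, and surjectivity of $q^\tau_A$ to conclude. Your explicit treatment of the canonical-extension bookkeeping in the case where $\psi$ is nondegenerate is a welcome elaboration of a point the paper handles only by saying ``the argument proceeds as in the first part.''
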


\begin{proof}
First assume that $\phi(A)\subset B$.
Then $\psi\circ\phi\:A\to M(C)$ is $\delta-\zeta$ equivariant.
Consider the diagram
\[
\xymatrix@C+30pt{
A^m \ar[rr]^-{\phi^m} \ar[dd]_{q^\tau_A} \ar[dr]_{(\psi\circ\phi)^m}
&&B^m \ar[dd]^{q^\tau_B} \ar[dl]^{\psi^m}
\\
&M(C^m) \ar[dd]^(.3){q^\tau_C}
\\
A^\tau \ar'[r]^-{\phi^\tau}[rr] \ar[dr]_{(\psi\circ\phi)^\tau}
&&B^\tau \ar[dl]^{\psi^\tau}
\\
&M(C^\tau).
}
\]
The top triangle commutes by \thmref{max gen}.
The rear, right-front, and left-front rectangles commute since $\tau$ is functorial for generalized homomorphisms.
Since the left vertical arrow $q^\tau_A$ is surjective,
it follows that the bottom triangle commutes, as desired.

On the other hand, assume that $\psi$ is nondegenerate.
Then again we have a $\delta-\zeta$ equivariant homomorphism
$\psi\circ\phi$ (extending $\psi$ canonically to $M(B)$),
the above diagram becomes
\[
\xymatrix@C+30pt{
A^m \ar[rr]^-{\phi^m} \ar[dd]_{q^\tau_A} \ar[dr]_{(\psi\circ\phi)^m}
&&M(B^m) \ar[dd]^{q^\tau_B} \ar[dl]^{\psi^m}
\\
&M(C^m) \ar[dd]^(.3){q^\tau_C}
\\
A^\tau \ar'[r]^-{\phi^\tau}[rr] \ar[dr]_{(\psi\circ\phi)^\tau}
&&M(B^\tau) \ar[dl]^{\psi^\tau}
\\
&M(C^\tau),
}
\]
and the argument proceeds as in the first part.
\end{proof}

Essentially the same techniques as in the above proof can be used to verify the following:

\begin{lem}\label{compose nd}
Let $\tau$ be a coaction functor that is functorial for nondegenerate homomorphisms,
let $(A,\delta)$, $(B,\epsilon)$, and $(C,\zeta)$ be coactions,
and let $\phi\:A\to M(B)$ and $\psi\:B\to M(C)$ be possibly degenerate equivariant homomorphisms.
If $\psi$ is nondegenerate,
and if
either $\phi(A)\subset B$
or $\phi$ is nondegenerate, 
then $(\psi\circ\phi)^\tau=\psi^\tau\circ\phi^\tau$.
In particular,
every coaction functor that is functorial for nondegenerate homomorphisms in the sense of \defnref{gen def} is also a functor on the nondegenerate category 
of coactions.
\end{lem}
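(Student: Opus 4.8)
The plan is to repeat the three-tiered diagram chase from the proof of \lemref{compose gen}, splitting along the two cases of the hypothesis and keeping careful track of which morphisms $\tau$ is actually defined on. In the first case both $\phi$ and $\psi$ are nondegenerate, so $\psi\circ\phi\:A\to M(C)$ is a nondegenerate $\delta-\zeta$ equivariant homomorphism and $(\psi\circ\phi)^\tau$ is supplied directly by \defnref{gen def}; then one copies the second diagram from the proof of \lemref{compose gen} (the one with $M(B^m)$ and $M(B^\tau)$ in the middle), whose top triangle commutes by the composition clause of \thmref{max gen}, whose three rectangles commute because $\tau$ is functorial for nondegenerate homomorphisms, and whose bottom triangle $(\psi\circ\phi)^\tau=\psi^\tau\circ\phi^\tau$ is then forced by surjectivity of $q^\tau_A$.

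The case $\phi(A)\subset B$ with $\psi$ nondegenerate is slightly more delicate, because then $\psi\circ\phi\:A\to M(C)$ can fail to be nondegenerate, so it is not automatic that $(\psi\circ\phi)^\tau$ even makes sense. Here I would first invoke the composition clause of \thmref{max gen} to get $(\psi\circ\phi)^m=\psi^m\circ\phi^m$ (its hypothesis holds since $\psi$ is nondegenerate, and also since $\phi(A)\subset B$, in which case $\phi^m(A^m)\subset B^m$), and then establish the containment $\ker q^\tau_A\subset\ker\bigl(q^\tau_C\circ(\psi\circ\phi)^m\bigr)$ \emph{indirectly} via the computation $q^\tau_C\circ(\psi\circ\phi)^m=q^\tau_C\circ\psi^m\circ\phi^m=\psi^\tau\circ q^\tau_B\circ\phi^m=\psi^\tau\circ\phi^\tau\circ q^\tau_A$, in which the middle equality uses nondegenerate functoriality of $\tau$ applied to $\psi$ and the last uses naturality of $q^\tau$ for the classical morphism $\phi$ (recall from \remref{max classical} that $\phi\mapsto\phi^m$ is precisely the functor underlying the coaction functor $\tau$). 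This exhibits $q^\tau_C\circ(\psi\circ\phi)^m$ as factoring through $q^\tau_A$, so $(\psi\circ\phi)^\tau$ exists, and cancelling the surjection $q^\tau_A$ yields $(\psi\circ\phi)^\tau=\psi^\tau\circ\phi^\tau$.

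For the ``in particular'' assertion I would verify the three requirements for $\tau$ to restrict to a functor on the nondegenerate category: that $\phi^\tau$ is nondegenerate whenever $\phi$ is (arguing as in the remark after \defnref{gen def}: $\phi^m$ is nondegenerate by \thmref{max gen}, hence so is $q^\tau_B\circ\phi^m=\phi^\tau\circ q^\tau_A$, and then cancel the surjection $q^\tau_A$); that identities go to identities ($\id_A$ is a classical morphism, so $\id_A^\tau=\id_{A^\tau}$ because $\tau$ is already a functor on the classical category); and that composites of nondegenerate morphisms are preserved, which is the first case above. I expect the only real obstacle to be the bookkeeping in the mixed case --- making sure $(\psi\circ\phi)^\tau$ is legitimately defined before asserting the identity, and being careful to invoke the \emph{classical} (not the nondegenerate) naturality of $q^\tau$ for $\phi$ and to confirm the hypotheses of the composition clause of \thmref{max gen}; everything else is a routine transcription of the \lemref{compose gen} argument.
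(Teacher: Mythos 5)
Your proof is correct and follows essentially the route the paper intends, namely the three-tiered diagram chase from the proof of \lemref{compose gen} (the paper itself only says ``essentially the same techniques'' apply). Your extra care in the mixed case --- observing that $(\psi\circ\phi)^\tau$ is not supplied by either the classical or the nondegenerate functoriality when $\psi\circ\phi$ is degenerate and non-classical, and deriving its existence from the factorization $q^\tau_C\circ(\psi\circ\phi)^m=\psi^\tau\circ q^\tau_B\circ\phi^m=\psi^\tau\circ\phi^\tau\circ q^\tau_A$ --- is precisely the point a naive transcription of that diagram chase would miss, so it is a worthwhile sharpening rather than a deviation.
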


As usual, things are simpler for decreasing coaction functors:

\begin{lem}\label{dec gen}
A decreasing coaction functor $\tau$ is functorial for generalized homomorphisms if and only if whenever $(A,\delta)$ and $(B,\epsilon)$ are coactions and $\phi\:A\to M(B)$ is a possibly degenerate $\delta-\epsilon$ equivariant homomorphism
there is a \(necessarily unique\) possibly degenerate homomorphism $\phi^\tau$ making the diagram
\begin{equation}\label{phi dec}
\xymatrix@C+20pt{
A \ar[r]^-\phi \ar[d]_{Q^\tau_A}
&M(B) \ar[d]^{Q^\tau_B}
\\
A^\tau \ar@{-->}[r]_-{\phi^\tau}
&M(B^\tau)
}
\end{equation}
commute.
If $\phi^\tau$ is only presumed to exist when $\phi$ is nondegenerate, then $\tau$ is functorial for nondegenerate homomorphisms.
\end{lem}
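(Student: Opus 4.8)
The plan is to deduce both implications from a single short diagram chase, using the commutative square \eqref{phi m} of \thmref{max gen} together with the factorizations supplied by the decreasing property of $\tau$. Since $\tau$ is decreasing, the natural transformation $Q^\tau$ satisfies
\[
q^\tau_A=Q^\tau_A\circ\psi_A
\midtext{and}
q^\tau_B=Q^\tau_B\circ\psi_B,
\]
and the maps $\psi_A$, $\psi_B$, $Q^\tau_A$, $Q^\tau_B$, $q^\tau_A$, $q^\tau_B$ are all surjective, hence nondegenerate; so each extends canonically to the relevant multiplier algebra, these canonical extensions respect composition, and the displayed identities persist on multiplier algebras. I would record at the outset that, throughout, arrows landing in a multiplier algebra are read as the appropriate canonical extensions, so that all the composites below are defined.

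First I would prove the forward implication. Assume $\tau$ is functorial for generalized homomorphisms and let $\phi\:A\to M(B)$ be a possibly degenerate $\delta-\epsilon$ equivariant homomorphism; then \defnref{gen def} supplies $\phi^\tau\:A^\tau\to M(B^\tau)$ with $q^\tau_B\circ\phi^m=\phi^\tau\circ q^\tau_A$. I would then compute
\begin{align*}
\phi^\tau\circ Q^\tau_A\circ\psi_A
&=\phi^\tau\circ q^\tau_A
=q^\tau_B\circ\phi^m \\
&=Q^\tau_B\circ\psi_B\circ\phi^m
=Q^\tau_B\circ\phi\circ\psi_A,
\end{align*}
where the last step is \eqref{phi m}. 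Since $\psi_A$ is surjective, this forces $\phi^\tau\circ Q^\tau_A=Q^\tau_B\circ\phi$, i.e.\ the diagram \eqref{phi dec} commutes; uniqueness of $\phi^\tau$ is automatic because $Q^\tau_A$ is surjective.

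For the reverse implication I would run the same computation in the other order: assuming that for every possibly degenerate equivariant $\phi\:A\to M(B)$ there is a $\phi^\tau$ with $Q^\tau_B\circ\phi=\phi^\tau\circ Q^\tau_A$, I would compute
\begin{align*}
\phi^\tau\circ q^\tau_A
&=\phi^\tau\circ Q^\tau_A\circ\psi_A
=Q^\tau_B\circ\phi\circ\psi_A \\
&=Q^\tau_B\circ\psi_B\circ\phi^m
=q^\tau_B\circ\phi^m,
\end{align*}
again using \eqref{phi m}, so \eqref{gen def diag} commutes and $\tau$ is functorial for generalized homomorphisms. The parenthetical statement about nondegenerate homomorphisms follows by running exactly these two arguments with attention restricted to nondegenerate $\phi$ throughout.

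I do not expect a serious obstacle here; the only delicate points are the bookkeeping of canonical extensions to multiplier algebras — so that, for instance, $q^\tau_B\circ\phi^m$ and $Q^\tau_B\circ\psi_B\circ\phi^m$ are literally the same homomorphism $A^m\to M(B^\tau)$ — and the cancellation of the surjection $\psi_A$ on the right, which is legitimate because it compares two honest homomorphisms $A\to M(B^\tau)$.
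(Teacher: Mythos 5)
Your proof is correct and is essentially the paper's argument: the paper assembles the same data (the factorizations $q^\tau=Q^\tau\circ\psi$ from the decreasing property, the naturality square $\psi_B\circ\phi^m=\phi\circ\psi_B{}^{-1}\!\ldots$ rather, $\psi_B\circ\phi^m=\phi\circ\psi_A$ from \thmref{max gen}, and surjectivity of $\psi_A$) into a single commutative prism and chases it, whereas you write out the identical chase as two composition identities. No gaps; the multiplier-algebra bookkeeping you flag is exactly the care the paper takes implicitly.
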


\begin{proof}
The above diagram fits into a bigger one:
\begin{equation}\label{big dec}
\xymatrix{
A^m \ar[rr]^-{\psi_A} \ar[dd]_{\phi^m} \ar[dr]_{q^\tau_A}
&&A \ar[dd]^\phi \ar[dl]^{Q^\tau_A}
\\
&A^\tau \ar@{-->}[dd]^(.3){\phi^\tau}
\\
M(B^m) \ar'[r]^-{\psi_B}[rr] \ar[dr]_{q^\tau_B}
&&M(B) \ar[dl]^{Q^\tau_B}
\\
&M(B^\tau).
}
\end{equation}
The top and bottom triangles commute since $\tau$ is a decreasing coaction functor.
The rear rectangle commutes
since the identity functor is functorial for generalized homomorphisms.
If 
there is a homomorphism $\phi^\tau$ making the left-front rectangle commute,
then the right-front rectangle also commutes since $\psi_A$ is surjective.
Conversely, if there is a homomorphism $\phi^\tau$ making the diagram~\eqref{phi dec} commute,
then the right-front rectangle in the diagram~\eqref{big dec} commutes,
and hence so does the left-front rectangle.
\end{proof}

Thus, a decreasing coaction functor $\tau$ is functorial for generalized homomorphisms if and only if for every possibly degenerate $\delta-\epsilon$ equivariant homomorphism $\phi\:A\to M(B)$ we have
\[
\ker Q^\tau_A\subset \ker Q^\tau_B\circ\phi.
\]

\begin{ex}\label{klq gen}
We apply \lemref{dec gen} to show that for every large ideal $E$ of $B(G)$, the coaction functor $\tau_E$ is functorial for generalized homomorphisms. Let $\phi\:A\to M(B)$ be a $\delta-\epsilon$ equivariant homomorphism,
and let
\[
a\in \ker Q^E_A
=\{b\in A:E\cdot a=\{0\}\}.
\]
Then for all $f\in E$ we have
\begin{align*}
f\cdot \phi(a)
&=\phi(f\cdot a)\righttext{(by equivariance)}
\\&=0,
\end{align*}
so $a\in \ker Q^E_B\circ\phi$.
In particular, 
the
identity functor and the normalization functor are functorial for generalized homomorphisms. For the identity functor this fact was already noted in \exref{max is gen}.
\end{ex}

\subsection*{The ideal property}

A coaction functor $\tau$ has the \emph{ideal property}
\cite[Definition~3.10]{klqexact}
if for every coaction $(A,\delta)$ and every strongly invariant ideal $I$ of $A$,
letting $\iota\:I\hookrightarrow A$ denote the inclusion map, the induced map
\[
\iota^\tau\:I^\tau\to A^\tau
\]
is injective.

\begin{ex}\label{id ideal}
The identity functor trivially has the ideal property.
\end{ex}

\begin{ex}
Every exact coaction functor has the ideal property,
and hence by \cite[Theorem~4.11]{klqfunctor} maximalization has the ideal property.
However, normalization has the ideal property, but is not exact unless $G$ is,
since by \cite[Proposition~4.24]{klqfunctor} the composition of an exact coaction functor with the full-cross-product functor is an exact crossed-product functor,
and the composition of normalization with the full-crossed-product functor is the reduced crossed product,
which is not an exact crossed-product functor unless $G$ is an exact group.
\end{ex}

\begin{rem}\label{gen iff id}
If a coaction functor $\tau$ has the ideal property,
then the associated crossed-product functor for actions 
has the ideal property in the sense of \cite[Definition~3.2]{bew},
since the full-crossed-product functor is exact \cite[Proposition~12]{gre:local}.
For crossed-product functors, 
\cite[Lemma~3.3]{bew} includes the fact that
functoriality for generalized homomorphisms and the ideal property are equivalent.
In the following proposition we show that part of this carries over to coaction functors.
However,
our naive attempts to adapt
the argument from \cite{bew} to show that
the ideal property implies functoriality for generalized homomorphisms
seem to
require that if $\phi\:A\to M(B)$ is a $\delta-\epsilon$ equivariant homomorphism then there is a 
strongly $\epsilon$-invariant
$C^*$-subalgebra $E$ of $M(B)$ 
containing both $B$ and 
$\phi(A)$,
which we have
unfortunately been unable to prove.
\end{rem}

\begin{prop}\label{gen id}
If a coaction functor $\tau$ is functorial for nondegenerate homomorphisms,
in particular if $\tau$ is functorial for generalized homomorphisms,
then $\tau$ has the ideal property.
\end{prop}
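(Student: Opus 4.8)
The plan is to exploit the fact that, although the inclusion $\iota\:I\hookrightarrow A$ of a strongly $\delta$-invariant ideal is degenerate as a map into $M(A)$, it has a nondegenerate companion map $A\to M(I)$ whose composite with $\iota$ is the canonical embedding $I\hookrightarrow M(I)$, and then to transport this relation through $\tau$ using the limited composability of \lemref{compose nd}.

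In detail: fix a coaction $(A,\delta)$ and a strongly $\delta$-invariant ideal $I$, with restricted coaction $\delta_I=\delta|_I$, and let $\iota\:I\hookrightarrow A$ denote the inclusion, which is $\delta_I-\delta$ equivariant and for which $\iota^\tau\:I^\tau\to A^\tau$ is defined since $\tau$ is a coaction functor. First I would introduce the homomorphism $\kappa\:A\to M(I)$ determined by $\kappa(a)b=ab$ for $a\in A$, $b\in I$. Since $I$ is an ideal with an approximate identity, $\clspn\{\kappa(A)I\}=\clspn\{AI\}=I$, so $\kappa$ is nondegenerate; and a routine computation with canonical extensions, using \propref{equivariant} and the fact that $\delta$ restricts to $\delta_I$, shows that $\kappa$ is $\delta-\delta_I$ equivariant. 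The key elementary observation is that $\kappa\circ\iota\:I\to M(I)$ is exactly the canonical embedding.

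Next I would apply functoriality. Since $\tau$ is functorial for nondegenerate homomorphisms and $\kappa$ is nondegenerate, $\kappa^\tau\:A^\tau\to M(I^\tau)$ is defined. Because $\iota(I)\subset A$ and $\kappa$ is nondegenerate, \lemref{compose nd} applies with $\phi=\iota$ and $\psi=\kappa$, giving $(\kappa\circ\iota)^\tau=\kappa^\tau\circ\iota^\tau$. On the other hand, $\kappa\circ\iota$ is the canonical embedding $I\hookrightarrow M(I)$, and its maximalization is the canonical embedding $I^m\hookrightarrow M(I^m)$ (by the uniqueness clause of \thmref{max gen}, since applying each of Lemmas~\ref{coact gen}, \ref{act gen} and \ref{com gen} to a canonical embedding again yields a canonical embedding); feeding this into the defining diagram of \defnref{gen def} and using that the surjection $q^\tau_I$ extends canonically to multipliers, one sees that $(\kappa\circ\iota)^\tau$ is the canonical embedding $I^\tau\hookrightarrow M(I^\tau)$, which in particular is injective. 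Hence $\kappa^\tau\circ\iota^\tau$ is injective, so $\iota^\tau$ is injective, which is the ideal property.

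I expect the work to be entirely bookkeeping rather than conceptual: the two places demanding a little care are the verification that $\kappa$ is $\delta-\delta_I$ equivariant (which uses strong invariance of $I$ to know that $\clspn\{\delta_I(b)(1\otimes c):b\in I,c\in C^*(G)\}=I\otimes C^*(G)$ and that $\delta(a)\delta(b)(1\otimes c)\in I\otimes C^*(G)$ for $b\in I$) and the claim that maximalization, and hence $\tau$, carries a canonical embedding to a canonical embedding. Neither presents a genuine obstruction, which is consistent with the remark preceding the proposition that it is the reverse implication --- the ideal property implying functoriality for generalized homomorphisms --- that resists a direct argument.
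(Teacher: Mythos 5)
Your argument is correct and is essentially the paper's own proof: the paper likewise factors the canonical embedding $I\hookrightarrow M(I)$ as the (degenerate) inclusion $I\hookrightarrow A$ followed by the nondegenerate map $A\to M(I)$, applies \lemref{compose nd}, and concludes injectivity of the induced map from injectivity of the canonical embedding $I^\tau\hookrightarrow M(I^\tau)$. The only difference is notational, plus your added detail verifying that maximalization and $\tau$ carry canonical embeddings to canonical embeddings, which the paper handles via an earlier remark.
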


\begin{proof}
We adapt the proof from \cite{bew}:
let $(A,\delta)$ be a coaction
and let $I$ be a strongly $\delta$-invariant ideal of $A$.
Let $\phi\:I\hookrightarrow A$ be the inclusion map,
let $\psi\:A\to M(I)$ be the canonical map,
and let $\iota\:I\hookrightarrow M(I)$ be the canonical embedding.
Note that $\iota$ and $\psi$ are nondegenerate equivariant homomorphisms,
and $\phi$ is a classical equivariant homomorphism.
We have $\psi\circ\phi=\iota$,
so by \lemref{compose nd} we also have $\psi^\tau\circ\phi^\tau=\iota^\tau$.
Since $\iota^\tau$ is the canonical embedding $I^\tau\hookrightarrow M(I^\tau)$,
we conclude that $\phi^\tau$ is injective.
\end{proof}

\begin{rem}\label{klq id}
Combining \exref{klq gen} with \propref{gen id}, we recover \cite[Lemma~3.11]{klqexact}: for every large ideal $E$ of $B(G)$ the coaction functor $\tau_E$ has the ideal property.
In particular, the identity functor and the normalization functor have the ideal property (and for the identity functor we already noted this in \exref{id ideal}).
\end{rem}

\begin{ex}\label{pathological}
We adapt the techniques of \cite[Example~3.16]{klqexact} (which was in turn adapted from the techniques of \cite[Section~2.5 and Example~3.5]{bew})
to show that if $G$ is nonamenable then there is a decreasing coaction functor for $G$ that does not have the ideal property,
and hence is not exact,
and also by \propref{gen id} is not functorial for nondegenerate homomorphisms,
and a fortiori is not functorial for generalized homomorphisms.
Let
\[
\RR=\bigl\{\bigl(C[0,1)\otimes C^*(G),\id\otimes\delta_G\bigr)\bigr\},
\]
and for every coaction $(A,\delta)$
let $\RR_{(A,\delta)}$
be the collection of all triples $(B,\epsilon,\phi)$,
where either
$(B,\epsilon)\in\RR$
and $\phi\:A\to B$ is a $\delta-\epsilon$ equivariant homomorphism
or
$(B,\epsilon)=(A^n,\delta^n)$
and $\phi\:A\to A^n$ is the normalization map.
Then let
\[
\left(\bigoplus_{(B,\epsilon,\phi)\in \RR_{A,\delta}}(B,\epsilon),
\bigoplus_{(B,\epsilon,\phi)\in \RR_{A,\delta}}\epsilon\right)
\]
be the direct-sum coaction.
Define a nondegenerate
$\delta-\bigoplus_{(B,\epsilon,\phi)\in \RR_{A,\delta}}\epsilon$
equivariant homomorphism
\[
Q_A^\RR=\bigoplus_{(B,\epsilon,\phi)\in \RR_{A,\delta}}\phi\:
A\to M\left(\bigoplus_{(B,\epsilon,\phi)\in \RR_{A,\delta}} B\right),
\]
and let
$A^\RR=Q_A^\RR(A)$.
Then there is a unique coaction $\delta^\RR$ of $G$ on $A^\RR$ such that $Q_A^\RR$ is $\delta-\delta^\RR$ equivariant.
Moreover,
for every morphism $\phi\:(A,\delta)\to (B,\epsilon)$ in the classical category of coactions there is a unique homomorphism $\phi^\RR$ making the diagram
\[
\xymatrix{
(A,\delta) \ar[r]^-\phi \ar[d]_{Q_A^\RR}
&(B,\epsilon) \ar[d]^{Q_B^\RR}
\\
(A^\RR,\delta^\RR) \ar@{-->}[r]_-{\phi^\RR}
&(B^\RR,\epsilon^\RR)
}
\]
commute, giving a decreasing coaction functor $\tau^\RR$ with
$(A^{\tau_\RR},\delta^{\tau_\RR})=(A^\RR,\delta^\RR)$
and
$\phi^{\tau_\RR}=\phi^\RR$.

We will show that (assuming that $G$ is nonamenable) the coaction functor $\tau_\RR$ does not have the ideal property.
Consider the coaction
\[
(A,\delta)=\bigl(C[0,1]\otimes C^*(G),\id\otimes\delta_G\bigr).
\]
Then
\[
I:=C[0,1)\otimes C^*(G)
\]
is a strongly invariant ideal of $A$,
because $\delta$ restricts on $I$ to the coaction
\[
\delta_I:=\id_{C[0,1)}\otimes \delta_G.
\]

To see that $Q_I^\RR$ is faithful,
note that $\RR_{(I,\delta_I)}$ contains the triple $(I,\delta_I,\id)$.
On the other hand, to see that $Q_A^\RR$ is not faithful
on $I$,
note that, since
$I$ has no nonzero projections,
there is no nonzero homomorphism
from $C[0,1]$ to $I$,
and hence no nonzero homomorphism
from $A=C[0,1]\otimes C^*(G)$ to $I$,
and so the only morphism in $\RR_{(A,\delta)}$ is the normalization map
\[
\id\otimes \lambda\:C[0,1]\otimes C^*(G)\to C[0,1]\otimes C^*_r(G),
\]
which is not faithful
on $I$
because $G$ is nonamenable.
\end{ex}

\begin{prop}\label{glb gen}
Let $\TT$ be a nonempty family of coaction functors.
If every functor in $\TT$ is functorial for generalized homomorphisms, then so is $\glb\TT$.
\end{prop}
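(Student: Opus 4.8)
The plan is to reduce everything to the two kernel characterizations already in hand. Write $\sigma=\glb\TT$; this is a coaction functor by \cite[Theorem~4.9]{klqfunctor}, so it makes sense to ask whether it is functorial for generalized homomorphisms, and by the discussion following \defnref{gen def} this holds precisely when, for every possibly degenerate $\delta-\epsilon$ equivariant homomorphism $\phi\:A\to M(B)$, one has
\[
\ker q^\sigma_A\subset\ker\bigl(q^\sigma_B\circ\phi^m\bigr),
\]
where $\phi^m\:A^m\to M(B^m)$ is the maximalization produced by \thmref{max gen}, and $q^\sigma_B$ is extended canonically to $M(B^m)$ (it is nondegenerate, being surjective). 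So I would fix such a $\phi$ and verify this one inclusion.

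First I would record that the right-hand side is a closed two-sided ideal of $A^m$, since it is the kernel of the homomorphism $q^\sigma_B\circ\phi^m\:A^m\to M(B^\sigma)$. Next, for each $\tau\in\TT$, the characterization $\ker q^{\glb\TT}=\clspn_{\rho\in\TT}\ker q^\rho$ of \cite[Theorem~4.9]{klqfunctor} gives $\ker q^\tau_B\subset\ker q^\sigma_B$, hence
\[
\ker\bigl(q^\tau_B\circ\phi^m\bigr)=(\phi^m)\inv(\ker q^\tau_B)\subset(\phi^m)\inv(\ker q^\sigma_B)=\ker\bigl(q^\sigma_B\circ\phi^m\bigr).
\]
Because $\tau$ is functorial for generalized homomorphisms, $\ker q^\tau_A\subset\ker\bigl(q^\tau_B\circ\phi^m\bigr)$, and therefore $\ker q^\tau_A\subset\ker\bigl(q^\sigma_B\circ\phi^m\bigr)$.

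Finally, since $\ker\bigl(q^\sigma_B\circ\phi^m\bigr)$ is a closed ideal containing $\ker q^\tau_A$ for every $\tau\in\TT$, it contains $\clspn_{\tau\in\TT}\ker q^\tau_A=\ker q^\sigma_A$, again by \cite[Theorem~4.9]{klqfunctor}. That is exactly the inclusion needed, so $\sigma$ is functorial for generalized homomorphisms, the map $\phi^\sigma$ being the unique homomorphism induced by $q^\sigma_A$ on the quotient.

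I do not expect a genuine obstacle here: the argument is a routine manipulation of kernels. The only points needing minor care are interpreting $q^\sigma_B\circ\phi^m$ via the canonical multiplier extension of $q^\sigma_B$, so that its kernel is genuinely a closed ideal of $A^m$, and the elementary fact that a closed ideal containing each member of a family of ideals contains their closed linear span; uniqueness of $\phi^\sigma$ is automatic from surjectivity of $q^\sigma_A$, as usual.
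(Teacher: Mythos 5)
Your proof is correct and follows essentially the same route as the paper's: the paper expresses the target inclusion as $\phi^m(\ker q^\sigma_A)B^m\subset\ker q^\sigma_B$ and passes to the closed span by linearity, density, and continuity, whereas you observe that $\ker(q^\sigma_B\circ\phi^m)$ is a closed ideal containing each $\ker q^\tau_A$ and hence their closed span --- the same argument in slightly different packaging. The one step to make explicit is that the inclusion $\ker q^\tau_B\subset\ker q^\sigma_B$ of ideals of $B^m$ must be promoted to the kernels of the canonical extensions to $M(B^m)$ before writing $(\phi^m)\inv(\ker q^\tau_B)\subset(\phi^m)\inv(\ker q^\sigma_B)$ (this follows since $\bar{q}(m)=0$ if and only if $mB^m\subset\ker q$), and your closing caveat indicates you are aware of this.
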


\begin{proof}
Let $\phi\:A\to M(B)$ be a $\delta-\epsilon$ equivariant homomorphism.
We must show
\[
\ker q^\sigma_A\subset \ker (q^\sigma_B\circ \phi^m),
\]
equivalently
\begin{equation}\label{show 1}
\phi^m(\ker q^\sigma_A)B^m\subset \ker q^\sigma_B.
\end{equation}
For each $\tau\in \TT$ we have
\[
\phi^m(\ker q^\tau_A)B^m\subset \ker q^\tau_B\subset \ker q^\sigma_B,
\]
so by linearity
\[
\phi^m\biggl(\spn_{\tau\in \TT}\ker q^\tau_A\biggr)B^m
=\spn_{\tau\in \TT}\phi^m(\ker q^\tau_A)B^m
\subset \ker q^\sigma_B,
\]
and hence by density and continuity
\[
\phi^m\biggl(\clspn_{\tau\in \TT}\ker q^\tau_A\biggr)B^m\subset \ker q^\sigma_B.
\]
By definition of greatest lower bound,
we have verified \eqref{show 1}.
\end{proof}

\begin{prop}\label{ideal gen}
Let $\TT$ be a nonempty family of coaction functors.
If every functor in $\TT$ has the ideal property, then so does $\glb\TT$.
\end{prop}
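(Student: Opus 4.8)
The plan is to reduce the ideal property for $\sigma:=\glb\TT$ to a statement about closed ideals in a $C^*$-algebra, and then to feed in the characterization of the greatest lower bound together with the ideal property of the members of $\TT$.

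So let $(A,\delta)$ be a coaction, let $I$ be a strongly $\delta$-invariant ideal, and let $\iota\:I\hookrightarrow A$ be the inclusion; we must show that $\iota^\sigma\:I^\sigma\to A^\sigma$ is injective. First I would observe that, since $0\to I\to A\to A/I\to 0$ is exact in the classical category of coactions and maximalization is exact \cite[Theorem~4.11]{klqfunctor}, the map $\iota^m\:I^m\to A^m$ is injective with image an ideal of $A^m$; from here on I identify $I^m$ with that ideal. Next, for any coaction functor $\rho$ the naturality of the surjective transformation $q^\rho$ applied to $\iota$ gives $q^\rho_A\circ\iota^m=\iota^\rho\circ q^\rho_I$, and since $q^\rho_I$ is surjective this square shows that $\iota^\rho$ is injective if and only if $\ker q^\rho_I=I^m\cap\ker q^\rho_A$ (the inclusion $\ker q^\rho_I\subseteq I^m\cap\ker q^\rho_A$ being automatic). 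In this language the hypothesis says that $\ker q^\tau_I=I^m\cap\ker q^\tau_A$ for every $\tau\in\TT$, and it suffices to prove the corresponding equality for $\sigma$.

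The main computation would then be, using $\ker q^\sigma_A=\clspn_{\tau\in\TT}\ker q^\tau_A$ and $\ker q^\sigma_I=\clspn_{\tau\in\TT}\ker q^\tau_I$ \cite[Theorem~4.9]{klqfunctor},
\[
I^m\cap\ker q^\sigma_A
=I^m\cap\clspn_{\tau\in\TT}\ker q^\tau_A
=\clspn_{\tau\in\TT}\bigl(I^m\cap\ker q^\tau_A\bigr)
=\clspn_{\tau\in\TT}\ker q^\tau_I
=\ker q^\sigma_I,
\]
where the third equality is the ideal property of each $\tau\in\TT$ and the second is the key input: for a closed ideal $J$ and a family $\{K_\tau\}$ of closed ideals in a $C^*$-algebra one has $J\cap\clspn_\tau K_\tau=\clspn_\tau(J\cap K_\tau)$. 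By the reformulation in the previous paragraph, this equality for $\sigma$ is exactly the injectivity of $\iota^\sigma$, so $\glb\TT$ has the ideal property.

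I expect the only non-formal step, and hence the main obstacle, to be the $C^*$-ideal identity $J\cap\clspn_\tau K_\tau=\clspn_\tau(J\cap K_\tau)$ (applied inside $A^m$ with $J=I^m$ and $K_\tau=\ker q^\tau_A$); the inclusion $\supseteq$ is immediate, and for $\subseteq$ I would run the standard approximate-identity argument: given $x\in J\cap\clspn_\tau K_\tau$ and $\varepsilon>0$, choose an approximate-identity element $e\in J$ with $\|ex-x\|<\varepsilon/2$ and a $y\in\sum_{\textup{finite}}K_\tau$ with $\|x-y\|<\varepsilon/2$; then $ey\in\sum_\tau(J\cap K_\tau)$ because $J$ and each $K_\tau$ are ideals, and $\|x-ey\|<\varepsilon$. (Alternatively use $J\cap K=\overline{JK}$ for closed ideals, together with $\overline{J\,\clspn_\tau K_\tau}=\clspn_\tau\overline{JK_\tau}$.) Everything else is bookkeeping with the naturality squares for $q^\rho$ and with the identification $I^m\subseteq A^m$, keeping in mind that $\glb\TT$ is itself a coaction functor carrying its own surjective natural transformation $q^\sigma$ with $\ker q^\sigma$ as above.
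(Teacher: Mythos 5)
Your proposal is correct and follows essentially the same route as the paper's proof: reduce injectivity of $\iota^\sigma$ to the kernel identity $\ker q^\sigma_I=I^m\cap\ker q^\sigma_A$, then compute using $\ker q^\sigma=\clspn_{\tau\in\TT}\ker q^\tau$, the ideal property of each $\tau$, and the fact that intersection with a closed ideal distributes over closed spans of ideals in a $C^*$-algebra. The paper runs the same chain of equalities (starting from $\iota^m(\ker q^\sigma_I)$) and cites the corresponding computation in the proof of \cite[Theorem~4.22]{klqfunctor}, while you supply the approximate-identity justification explicitly; there is no substantive difference.
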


\begin{proof}
Let $(A,\delta)$ be a coaction, let $I$ be a strongly invariant ideal of $A$, and let $\iota\:I\hookrightarrow A$ denote the inclusion map. We must show that the induced map
\[
\iota^\sigma\:I^\sigma\to A^\sigma
\]
is injective,
equivalently
\begin{equation}\label{kernels}
\iota^m(\ker q^\sigma_I)=\iota^m(I^m)\cap \ker q^\sigma_A.
\end{equation}
We know that for every $\tau\in \TT$ the map
\[
\iota^\tau\:I^\tau\to A^\tau
\]
is injective.
The computation justifying \eqref{kernels} is the same as part of the proof of \cite[Theorem~4.22]{klqfunctor}:
\begin{align*}
\iota^m(\ker q^\sigma_I)
&=\iota^m\left(\clspn_{\tau\in \TT} \ker q^\tau_I\right)
\\&=\clspn_{\tau\in \TT} \iota^m(\ker q^\tau_I)
\\&=\clspn_{\tau\in \TT}\bigl(\iota^m(I^m)\cap \ker q^\tau_A\bigr)
\righttext{(since $\tau$ has the ideal property)}
\\&=\iota^m(I^m)\cap \clspn_{\tau\in \TT} \ker q^\tau_A
\\&\hspace{.5in}\text{(since all spaces involved are ideals in $C^*$-algebras)}
\\&=\iota^m(I^m)\cap \ker q^\sigma_A.
\qedhere
\end{align*}
\end{proof}

This might be an appropriate place to record a similar fact for decreasing coaction functors:

\begin{prop}\label{min dec}
The greatest lower bound of
any family of decreasing coaction functors
is itself decreasing.
\end{prop}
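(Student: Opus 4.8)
The plan is to work entirely with the kernel characterizations and the partial order $\le$. Recall from \cite[Theorem~4.9]{klqfunctor} that for a nonempty family $\TT$ of coaction functors the greatest lower bound $\glb\TT$ exists and is characterized by $\ker q^{\glb\TT}_A=\clspn_{\tau\in\TT}\ker q^\tau_A$ for every coaction $(A,\delta)$, and recall that a coaction functor $\sigma$ is decreasing exactly when $\sigma\le\id$, which by the kernel-inclusion characterization of $\le$ means $\ker\psi_A\subset\ker q^\sigma_A$ for all $(A,\delta)$, where $\psi_A=q^{\id}_A$ is the maximalization surjection.

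First I would use nonemptiness of $\TT$ to fix some $\tau_0\in\TT$. Since $\tau_0$ is decreasing, $\ker\psi_A\subset\ker q^{\tau_0}_A$ for every coaction $(A,\delta)$. As $\ker q^{\tau_0}_A$ is one of the closed ideals whose closed span defines $\ker q^{\glb\TT}_A$, this gives
\[
\ker\psi_A\subset\ker q^{\tau_0}_A\subset\clspn_{\tau\in\TT}\ker q^\tau_A=\ker q^{\glb\TT}_A,
\]
which is precisely the assertion $\glb\TT\le\id$, i.e., that $\glb\TT$ is decreasing. If one wants the natural transformation $Q^{\glb\TT}$ witnessing this explicitly, it is the transformation $\Gamma^{\id,\glb\TT}$ supplied by the relation $\glb\TT\le\id$, characterized by $Q^{\glb\TT}_A\circ\psi_A=q^{\glb\TT}_A$; the remaining natural transformation $\Lambda^{\glb\TT}$ needed to fill out diagram~\eqref{co fn diag} is already part of the data of $\glb\TT$ as a coaction functor, so no extra construction is required.

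I do not expect any genuine obstacle here: the argument is just transitivity of $\le$ — immediate from its kernel-inclusion characterization — together with the elementary fact that a greatest lower bound is in particular a lower bound, so that $\glb\TT\le\tau_0\le\id$. The only hypothesis actually used is that $\TT$ is nonempty, which is needed both to produce $\tau_0$ and to guarantee that $\glb\TT$ exists in the first place.
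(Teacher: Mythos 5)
Your proof is correct and takes essentially the same route as the paper's: both arguments reduce to picking some $\tau_0\in\TT$ and observing that $\glb\TT\le\tau_0\le\id$, unpacked via the kernel-inclusion characterization $\ker\psi_A\subset\ker q^{\tau_0}_A\subset\ker q^{\glb\TT}_A$. The paper merely packages the second inclusion as ``a greatest lower bound is a lower bound'' rather than invoking the explicit formula $\ker q^{\glb\TT}_A=\clspn_{\tau\in\TT}\ker q^\tau_A$, but the content is identical.
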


\begin{proof}
We first point out a routine fact:
if $\sigma$ and $\tau$ are coaction functors,
and if $\sigma\le \tau$ and $\tau$ is decreasing, then $\sigma$ is decreasing.
To see this,
let $(A,\delta)$ be a coaction.
Since $\sigma\le \tau$,
\[
\ker q^\tau_A\subset \ker q^\sigma_A.
\]
Since $\tau$ is decreasing,
\[
\ker \psi_A\subset \ker q^\tau_A.
\]
Thus $\ker \psi_A\subset \ker q^\sigma_A$, so $\sigma$ is decreasing.

Now let $\sigma$ be the greatest lower bound of $\TT$.
For every $\tau\in \TT$ we have $\sigma\le \tau$ and $\tau$ is decreasing,
so 
$\sigma$ is decreasing.
\end{proof}

\section{Correspondence property}\label{cor ppy sec}

Given $C^*$-algebras $A$ and $B$, recall that an \emph{$A-B$ correspondence} is a Hilbert $B$-module $X$ equipped with a homomorphism $\varphi_A\:A\to \LL(X)$,
inducing a left $A$-module structure via $ax=\varphi_A(a)x$.
We sometimes write $X={}_AX_B$ to emphasize $A$ and $B$.
If $A=B$ we call $X$ an \emph{$A$-correspondence}.

The closed span 
of the inner product,
written $\spn\{\<X,X\>_B\}$,
is an ideal of $B$, and
$X$ is \emph{full} if this ideal is dense.
By the Cohen-Hewitt factorization theorem,
the set $AX=\{ax:a\in A,x\in X\}$ is an $A-B$ subcorrespondence, and $X$ is \emph{nondegenerate} if $AX=X$.

If $\phi\:A\to M(B)$ is a homomorphism, the associated \emph{standard $A-B$ correspondence}, denoted by ${}_AB_B$, has left-module homomorphism $\varphi_A=\phi$.

If $X$ is an $A-B$ correspondence and $Y$ is a $C-D$ correspondence, a \emph{correspondence homomorphism} from $X$ to $Y$ is a triple $(\pi,\psi,\rho)$, where $\pi\:A\to C$ and $\rho\:B\to D$ are homomorphisms and $\psi\:X\to Y$ is a linear map such that $\psi(ax)=\pi(a)\psi(x)$, $\psi(xb)=\psi(x)\rho(b)$, and $\<\psi(x),\psi(y)\>_D=\rho(\<x,y\>_B)$
(and recall that the second property, involving $xb$, is automatic).
If $\pi$ and $\rho$ are understood we sometimes write $\psi$ for the correspondence homomorphism.
If $\pi$, $\psi$, and $\rho$ are all bijections then $\psi$ is a \emph{correspondence isomorphism},
and we write $X\simeq Y$.
If $A=C$, $B=D$, $\pi=\id_A$, and $\rho=\id_B$, we call $\psi$ an \emph{$A-B$ correspondence homomorphism},
and an \emph{$A-B$ correspondence isomorphism} is an $A-B$ correspondence homomorphism that is also a correspondence isomorphism.

An \emph{$A-B$ Hilbert bimodule} is an $A-B$ correspondence $X$ equipped with a left $A$-valued inner product ${}_A\<\cdot,\cdot\>$ that is compatible with the $B$-valued one. $X$ is \emph{left-full} if $\clspn\{{}_A\<X,X\>\}=A$; to avoid ambiguity we sometimes say $X$ is \emph{right-full} if $\clspn\{\<X,X\>_B\}=B$. If $X$ is both left and right-full it is an \emph{$A-B$ imprimitivity bimodule}.
We write $X^*$ for the \emph{reverse} $B-A$ Hilbert bimodule\footnote{Although the notation $\wilde X$ is perhaps more common, it would conflict with another usage of $\,\wilde{}$ we will need later.}.
The \emph{linking algebra} of an $A-B$ Hilbert bimodule $X$ is
$L(X)=\smtx{A&X\\X^*&B}$,
but we frequently just write $\smtx{A&X\\{*}&B}$ because the lower-left corner takes care of itself.
The linking algebra of the reverse bimodule is $L(X^*)=\smtx{B&X^*\\X&B}$.
The linking algebra of an $A-B$ correspondence $X$ is defined as the linking algebra of the associated (left-full) $\KK(X)-B$ Hilbert bimodule.

Recall from \cite[Definition~1.7]{enchilada} that if $X$ is an $A-B$ correspondence and $I$ is an ideal of $B$, then $XI$ is an $A-B$ subcorrespondence of $X$, and the ideal
\[
X\dashind I=X\dashind_B^A I:=\{a\in A:aX\subset XI\}
\]
of $A$ is said to be \emph{induced from $I$ via $X$}.
If $X\simeq Y$ as $A-B$ correspondences, then $X\dashind I=Y\dashind I$ for every ideal $I$ of $B$.

The quotient $X/XI$ becomes an $(A/X\dashind I)-(B/I)$ correspondence.

Let $J=\clspn\{\<X,X\>_B\}$.
Then $X$ is a nondegenerate right $J$-module and $J$ is an ideal of $B$, so
\[
XI=(XJ)I=X(JI)=X(JI).
\]
Thus $X\dashind I=X\dashind (JI)$.
Moreover, $X$ may also be regarded as an $A-J$ correspondence, and the quotient $X/XI$ may also be regarded as an $(A/X\dashind_J^A (JI))-(J/(JI))$ correspondence.

If $I$ and $J$ are ideals of $B$,
and we regard $J$ as a $J-B$ correspondence with the given algebraic operations,
then
\[
J\dashind_B^J I=\{a\in J:aJ\subset JI\}=JI.
\]
On the other hand,
regarding $B$ as a $J-B$ correspondence with the given algebraic operations,
then, since $BI=I$,
we nevertheless still get the same result:
\[
B\dashind_B^J I=\{a\in J:aB\subset I\}=J\cap I=JI.
\]

Given
a homomorphism $\phi\:A\to M(B)$
and an ideal $I$ of $B$,
and regard $B$ as the associated standard $A-B$ correspondence
(with left-module multiplication given by $a\cdot b=\phi(a)b$ for $a\in A$ and $b\in B$),
then
\[
B\dashind_B^A I=\{a\in A:\phi(a)B\subset I\}
\]
is sometimes denoted by $\phi^*(I)$.

Regarding $A$ as a standard $A-A$ correspondence, for every ideal $I$ of $A$ we have
$A\dashind_A^A I=I$.

If $X$ is an $A-B$ correspondence and $Y$ is a $B-C$ correspondence, we write $X\otimes_B Y$ for the balanced tensor product, which is an $A-C$ correspondence.
Letting $K=\KK(X)$, $X$ becomes a left-full $K-B$ Hilbert bimodule,
and
\[
{}_AX_B\simeq ({}_AK_K)\otimes_K({}_KY_B).
\]
Letting $J=\clspn\{\<X,X\>_B\}$, $X$ becomes a full $A-J$ correspondence, and
\[
{}_AX_B\simeq ({}_AX_J)\otimes_J({}_JB_B).
\]
By Rieffel's induction in stages theorem, if $X$ is an $A-B$ correspondence, $Y$ is a $B-C$ correspondence, and $I$ is an ideal of $C$, then
\[
(X\otimes_B Y)\dashind_C^A I=X\dashind_B^A Y\dashind_C^B I.
\]
If $X$ is an $A-B$ imprimitivity bimodule
then
\[
X^*\otimes_A X\simeq {}_BB_B,
\]
so if $I$ is an ideal of $B$, then
\[
X^*\dashind_A^B X\dashind_B^A I=I.
\]

Given actions $\alpha$ and $\beta$ of $G$ on $A$ and $B$, respectively, and an $\alpha-\beta$ compatible action $\gamma$ on $X$, we say $(X,\gamma)$ is an \emph{$(A,\alpha)-(B,\beta)$ correspondence action}.
The crossed product $X\rtimes_\gamma G$ is an $(A\rtimes_\alpha G)-(B\rtimes_\beta G)$ correspondence,
and we let $i_X\:X\to M(X\rtimes_\gamma G)$ denote the canonical $i_A-i_B$ compatible correspondence homomorphism.
Writing $\gamma^{(1)}$ for the induced action of $G$ on $\KK(X)$,
there is a canonical isomorphism
\[
\KK(X\rtimes_\gamma G)\simeq \KK(X)\rtimes_{\gamma^{(1)}} G,
\]
and, blurring the distinction between these two isomorphic algebras,
the left-module homomorphism of the crossed-product correspondence is given by
\[
\varphi_{A\rtimes_\alpha G}=\varphi_A\rtimes G
\:A\rtimes_\alpha G\to M(\KK(X)\rtimes_{\gamma^{(1)}} G).
\]
In particular, if $X$ is a left-full $A-B$ Hilbert bimodule,
then $X\rtimes_\gamma G$ is a left-full $(A\rtimes_\alpha G)-(B\rtimes_\beta G)$ bimodule,
and is moreover an imprimitivity bimodule if $X$ is.

Let $(X,\gamma)$ be an $(A,\alpha)-(B,\beta)$ correspondence action,
and let
$J=\clspn\{\<X,X\>_B\}$.
Then $J$ is a $\beta$-invariant ideal of $B$,
and we write $\eta$ for the action on $J$ gotten by restricting $\beta$.
As in \cite[Proposition~3.2]{enchilada}\footnote{The theory of \cite{enchilada} uses reduced crossed products, but
for the results of concern to us here
the same techniques handle the case of full crossed products.},
\[
\clspn\<X\rtimes_\gamma G,X\rtimes_\gamma G\>_{B\rtimes_\beta G}
=J\rtimes_\eta G,
\]
where the latter is identified with an ideal of $B\rtimes_\beta G$ in the canonical way.

If $(X,\gamma)$ is an $(A,\alpha)-(B,\beta)$ Hilbert bimodule  action
(so that also ${}_A\<\gamma_s(x),\gamma_s(y)\>=\alpha_s({}_A\<x,y\>)$),
there are a canonical $\beta-\alpha$ compatible action $\gamma^*$ on $X^*$
and a canonical isomorphism
\[
(X\rtimes_\gamma G)^*\simeq X^*\rtimes_{\gamma^*} G.
\]

Dually,
given coactions $\delta$ and $\epsilon$ of $G$ on $A$ and $B$, respectively, and a $\delta-\epsilon$ compatible coaction $\zeta$ on $X$, we say $(X,\zeta)$ is an \emph{$(A,\delta)-(B,\epsilon)$ correspondence coaction}.
The crossed product $X\rtimes_\zeta G$ is an $(A\rtimes_\delta G)-(B\rtimes_\epsilon G)$ correspondence,
and we let $j_X\:X\to M(X\rtimes_\zeta G)$ denote the canonical $j_A-j_B$ compatible correspondence homomorphism.
Writing $\zeta^{(1)}$ for the induced coaction of $G$ on $\KK(X)$,
there is a canonical isomorphism
\[
\KK(X\rtimes_\zeta G)\simeq \KK(X)\rtimes_{\zeta^{(1)}} G,
\]
and, blurring the distinction between these two isomorphic algebras,
the left-module homomorphism of the crossed-product correspondence is given by
\[
\varphi_{A\rtimes_\delta G}=\varphi_A\rtimes G
\:A\rtimes_\delta G\to M(\KK(X)\rtimes_{\zeta^{(1)}} G).
\]
In particular, if $X$ is a left-full $A-B$ Hilbert bimodule,
then $X\rtimes_\zeta G$ is a left-full $(A\rtimes_\delta G)-(B\rtimes_\epsilon G)$ bimodule,
and is moreover an imprimitivity bimodule if $X$ is.

Let $(X,\zeta)$ be an $(A,\delta)-(B,\epsilon)$ correspondence coaction, and let
$J=\clspn\{\<X,X\>_B\}$.
Then $J$ is a strongly $\epsilon$-invariant ideal of $B$
\cite[Lemma~2.32]{enchilada},
and we write $\eta$ for the coaction on $J$ gotten by restricting $\epsilon$.
As in \cite[Proposition~3.9]{enchilada},
\[
\clspn\<X\rtimes_\zeta G,X\rtimes_\zeta G\>_{B\rtimes_\epsilon G}
=J\rtimes_\eta G,
\]
where the latter is identified with an ideal of $B\rtimes_\epsilon G$ in the canonical way.

If $(X,\zeta)$ is an $(A,\delta)-(B,\epsilon)$ Hilbert-bimodule coaction
(so that also
${}_{M(A\otimes C^*(G))}\<\zeta(x),\zeta(y)\>=\delta({}_A\<x,y\>)$),
there are a canonical $\epsilon-\delta$ compatible coaction $\zeta^*$ on $X^*$
and a canonical isomorphism
\[
(X\rtimes_\zeta G)^*\simeq X^*\rtimes_{\zeta^*} G.
\]

If $(X,\gamma)$ is an $(A,\alpha)-(B,\beta)$ correspondence action,
the \emph{dual coaction} $\what\gamma$ on $X\rtimes_\gamma G$
is $\what\alpha-\what\beta$ compatible,
and dually
if $(X,\zeta)$ is an $(A,\delta)-(B,\epsilon)$ correspondence coaction,
the \emph{dual action} $\what\zeta$ on $X\rtimes_\zeta G$
is $\what\delta-\what\epsilon$ compatible.
Moreover, if $(X,\gamma)$ is an $(A,\alpha)-(B,\beta)$ Hilbert-bimodule action, the isomorphism
$(X\rtimes_\gamma G)^*\simeq X^*\rtimes_{\gamma^*} G$
is $\what\gamma^*-\what{\gamma^*}$ equivariant,
and dually
if $(X,\zeta)$ is an $(A,\delta)-(B,\epsilon)$ Hilbert bimodule coaction, the isomorphism
$(X\rtimes_\zeta G)^*\simeq X^*\rtimes_{\zeta^*} G$
is $\what\zeta^*-\what{\zeta^*}$ equivariant.

Given equivariant actions $(A,\alpha,\mu)$ and $(B,\beta,\nu)$,
and an $(A,\alpha)-(B,\beta)$ correspondence action $(X,\gamma)$,
by \cite[Lemma~6.1]{koqmaximal}
there is an
$\wilde\alpha-\wilde\beta$ compatible coaction\footnote{(recall from \secref{prelim} this notation involving tildes)}
$\wilde\gamma$ on $X\rtimes_\gamma G$ given by
\[
\wilde\gamma(y)=V_A\what\gamma(y)V_B^*.
\]
Moreover, if in fact $(X,\gamma)$ is a Hilbert bimodule action,
the isomorphism
$(X\rtimes_\gamma G)^*\simeq X^*\rtimes_{\gamma^*} G$
is $\wilde\gamma^*-\wilde{\gamma^*}$
equivariant\footnote{and here is where the notation ${}^*$ for the reverse bimodule is important}.

Given $\KK$-algebras $(A,\iota)$ and $(B,\jmath)$,
and an $A-B$ correspondence $X$,
\cite[Theorem~6.4 and its proof]{koqstable}
constructs a
$C(A,\iota)-C(B,\jmath)$ correspondence
$C(X,\iota,\jmath)$ given by
\[
C(X,\iota,\jmath)=\{x\in M(X):\iota(k)\cdot x=x\cdot \jmath(k)\in X\text{ for all }k\in\KK\}.
\]
Writing $\kappa\:\KK\to M(\KK(X))$ for the induced nondegenerate homomorphism,
there is a canonical isomorphism
\[
\KK\bigl(C(X,\iota,\jmath)\bigr)\simeq C\bigl(\KK(X),\kappa\bigr),
\]
and, blurring the distinction between these two isomorphic algebras,
the left-module homomorphism of the relative-commutant correspondence
is given by
\[
\varphi_{C(A,\iota)}=C(\varphi_A)
\:C(A,\iota)\to M\bigl(C(\KK(X),\kappa)).
\]
In particular, if $X$ is a left-full $A-B$ Hilbert bimodule,
then $C(X,\iota,\jmath)$ is a left-full $C(A,\iota)-C(B,\jmath)$ bimodule,
and is moreover an imprimitivity bimodule if $X$ is.

Given $\KK$-coactions $(A,\delta,\iota)$ and $(B,\epsilon,\jmath)$,
and an $(A,\delta)-(B,\epsilon)$ correspondence coaction $(X,\zeta)$,
by \cite[Lemma~6.3]{koqmaximal}
there is a
$C(\delta)-C(\epsilon)$ compatible coaction
$C(\zeta)$ on $C(X,\iota,\jmath)$
given by the restriction of
the canonical extension to $M(X)$ of
$\zeta$.
As before, let $J=\clspn\{\<X,X\>_B\}$,
and let $\eta=\epsilon|_J$ be the restricted coaction.
Letting $\rho\:B\to M(J)$ be the canonical homomorphism,
which is nondegenerate,
we can define a nondegenerate homomorphism
\[
\omega=\rho\circ\jmath\:\KK\to M(J),
\]
and $(J,\eta,\omega)$ is a $\KK$-coaction.
It is not hard to verify that
\[
\clspn\{\<C(X,\iota,\jmath),C(X,\iota,\jmath)\>_{C(B,\jmath)}\}
=C(J,\omega),
\]
which we identify with an ideal of $C(B,\jmath)$.

If $(A,\delta,\iota)$ and $(B,\epsilon,\jmath)$ are $\KK$-coactions
and
$X$ is an $(A,\delta)-(B,\epsilon)$ Hilbert bimodule coaction,
there is an isomorphism
\[
C(X,\iota,\jmath)^*\simeq C(X^*,\jmath,\iota)
\]
of $C(B,\jmath)-C(A,\iota)$ Hilbert bimodules,
and moreover this isomorphism is
$C(\zeta)^*-C(\zeta^*)$ equivariant.

Recall that the maximalization of a coaction $(A,\delta)$ is the coaction
\[
(A^m,\delta^m)=
\bigl(C(A\rtimes_\delta G\rtimes_{\what\delta} G,j_G^\delta\rtimes G),
C(\wilde\delta)\bigr),
\]
where
\[
\wilde\delta=\wilde{\what\delta}
=\ad V_{A\rtimes_\delta G}\circ\what{\what\delta}.
\]

\begin{defn}\label{max zeta}
Given coactions $(A,\delta)$ and $(B,\epsilon)$,
the \emph{maximalization} of
an $(A,\delta)-(B,\epsilon)$ correspondence coaction $(X,\zeta)$
is
the $(A^m,\delta^m)-(B^m,\epsilon^m)$
correspondence coaction
\[
(X^m,\zeta^m):=
\bigl(C(X\rtimes_\zeta G\rtimes_{\what\zeta} G,
j_G^\delta\rtimes G,j_G^\epsilon\rtimes G),C(\wilde\zeta)\bigr),
\]
where
\[
\wilde\zeta(y)
=\wilde{\what\zeta}(y)
=V_{A\rtimes_\delta G}\,\what{\what\zeta}(y)V_{B\rtimes_\epsilon G}
\]
for $y\in X^m$.
\end{defn}

There
is a canonical isomorphism
\begin{equation}\label{K iso}
\bigl(\KK(X^m),(\zeta^m)^{(1)}\bigr)\simeq \bigl(\KK(X)^m,(\zeta^{(1)})^m\bigr).
\end{equation}
Blurring the distinction between these two isomorphic algebras,
the left-module homomorphism of the $A^m-B^m$ correspondence $X^m$ is given by
\[
\varphi_{A^m}=\varphi_A^m
\:A^m\to M(\KK(X)^m)=M(\KK(X^m)).
\]
In particular, if $X$ is a left-full $A-B$ Hilbert bimodule,
then $X^m$ is a left-full $A^m-B^m$ Hilbert bimodule,
and is moreover an imprimitivity bimodule if $X$ is.

Letting $J=\clspn\{\<X,X\>_B\}$
with coaction $\eta=\epsilon|_J$ 
as before,
it follows from the above properties of the functors in the factorization of the Fischer construction that
\[
\clspn\{\<X^m,X^m\>_{B^m}\}=J^m,
\]
which we identify with an ideal of $B^m$.

If $(X,\zeta)$ is an $(A,\delta)-(B,\epsilon)$ Hilbert bimodule coaction,
then it follows from the properties of the steps in the Fischer construction that there is a canonical isomorphism
\[
(X^{m*},\zeta^{m*})\simeq (X^{*m},\zeta^{*m}).
\]

Let $\tau$ be a coaction functor, and let $(X,\zeta)$ be a Hilbert $(B,\epsilon)$-module coaction
(equivalently, a $(\C,\delta_\text{triv})-(B,\epsilon)$ correspondence coaction, where $\delta_\text{triv}$ is the trivial coaction on $\C$).
Then $X^m\ker q^\tau_B$ is a Hilbert $B^m$-submodule of $X^m$.
We define
\[
X^\tau=X^m/X^m\ker q^\tau_B,
\]
which is a Hilbert $B^\tau$-module,
and we further write
\[
q^\tau_X\:X^m\to X^\tau
\]
for the quotient map, which is a surjective homomorphism
of the Hilbert $B^m$-module $X^m$ onto the Hilbert $B^\tau$-module $X^\tau$.
It follows quickly from the definitions that there is a (necessarily unique) Hilbert-module homomorphism $\zeta^\tau$ making the diagram
\[
\xymatrix@C+30pt{
X^m \ar[r]^-{\zeta^m} \ar[d]_{q^\tau_X}
&\wilde M(X^m\otimes C^*(G)) \ar[d]^{q^\tau_X\otimes \id}
\\
X^\tau \ar@{-->}[r]_-{\zeta^\tau}
&\wilde M(X^\tau\otimes C^*(G))
}
\]
commute, and that $\zeta^\tau$ is moreover a coaction on the Hilbert $B^\tau$-module $X^\tau$.
Let
\[
(q^\tau_X)^{(1)}\:\KK(X^m)\to \KK(X^\tau)
\]
be the induced surjection, which is
equivariant for the induced coactions
$(\zeta^m)^{(1)}$ on $\KK(X^m)$ and
$(\zeta^\tau)^{(1)}$ on $\KK(X^\tau)$.

Recall from
\cite[Definition~4.16]{klqfunctor}
that
we call 
a coaction functor $\tau$ Morita compatible
if whenever $(X,\zeta)$ is an $(A,\delta)-(B,\epsilon)$ imprimitivity-bimodule coaction we have
\[
\ker q^\tau_A=X^m\dashind \ker q^\tau_B.
\]

\begin{rem}
\cite[Lemma~4.19]{klqfunctor} says that a coaction functor $\tau$ is Morita compatible if and only if 
for
every $(A,\delta)-(B,\epsilon)$ imprimitivity-bimodule coaction $(X,\zeta)$ 
the maximalization $X^m$
descends to an $A^\tau-B^\tau$ imprimitivity bimodule $X^\tau$.
Thus, if $\cp^\tau$ is the crossed-product functor given by $\tau$ composed with full-crossed-product, then Morita compatibility of $\tau$ implies that $\cp^\tau$ is \emph{strongly Morita compatible} in the sense of \cite[Definition~4.7]{bew}.
\end{rem}

\begin{ex}\label{klq mor}
The maximalization functor, and also the functors $\tau_E$ for large ideals $E$ of $B(G)$, are Morita compatible, by \cite[Lemma~4.15, Remark~4.18, and Proposition~6.10]{klqfunctor}.
\end{ex}

\begin{rem}
\cite[Proposition~5.5]{klqfunctor} can be equivalently stated as follows:
A decreasing coaction functor $\tau$ is Morita compatible if and only if
whenever $(X,\zeta)$ is an $(A,\delta)-(B,\epsilon)$ imprimitivity-bimodule coaction we have
\[
\ker Q^\tau_A=X\dashind_B^A \ker Q^\tau_B.
\]
\end{rem}

\begin{rem}
Let $(A,\delta)$ be a coaction, and let $I$ be a strongly $\delta$-invariant ideal of $A$.
The diagram
\begin{equation}\label{qIqA}
\xymatrix{
I^m \ar[r]^-{\iota^m} \ar[d]_{q^\tau_I}
&A^m \ar[d]^{q^\tau_A}
\\
I^\tau \ar[r]_-{\iota^\tau}
&A^\tau
}
\end{equation}
commutes because $\tau$ is a coaction functor.
The top arrow is always injective,
so we can identify $I^m$ with the ideal $\iota^m(I^m)$ of $A^m$.
Thus we always have
\[
\ker q^\tau_I
\subset \ker (q^\tau_A\circ \iota^m)
=I^m\cap \ker q^\tau_A,
\]
and since $\ker q^\tau_I\subset I^m$ we have $\ker q^\tau_I\subset \ker q^\tau_A$.
The ideal property for $\tau$ means that the bottom arrow is injective,
equivalently
\begin{equation}\label{intersect}
\ker q^\tau_I=I^m\cap \ker q^\tau_A,
\end{equation}
in which case the quotient map $q^\tau_I$ may be regarded as the restriction of $q^\tau_A$ to the ideal $I^m$.
\end{rem}

\begin{lem}\label{id ppy bimod}
Let $\tau$ be a coaction functor that has the ideal property.
Then $\tau$ is Morita compatible if and only if
for every left-full $(A,\delta)-(B,\epsilon)$ Hilbert-bimodule coaction $(X,\zeta)$ we have
\begin{equation}\label{left full}
\ker q^\tau_A=X^m\dashind_{B^m}^{A^m} \ker q^\tau_B.
\end{equation}
\end{lem}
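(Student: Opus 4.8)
The plan is to prove the two implications separately. The ``if'' direction is immediate: every $(A,\delta)-(B,\epsilon)$ imprimitivity-bimodule coaction is in particular a left-full $(A,\delta)-(B,\epsilon)$ Hilbert-bimodule coaction, and for such a coaction the equation \eqref{left full} is precisely the defining relation for Morita compatibility recalled before the lemma (with the unadorned $X^m\dashind$ meaning $X^m\dashind_{B^m}^{A^m}$). So the hypothesis that \eqref{left full} holds for all left-full Hilbert-bimodule coactions specializes to Morita compatibility.

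For the ``only if'' direction, suppose $\tau$ is Morita compatible (and recall $\tau$ has the ideal property by hypothesis), and let $(X,\zeta)$ be a left-full $(A,\delta)-(B,\epsilon)$ Hilbert-bimodule coaction. Put $J=\clspn\{\<X,X\>_B\}$, a strongly $\epsilon$-invariant ideal of $B$ by \cite[Lemma~2.32]{enchilada}, and let $\eta=\epsilon|_J$. Since $X$ is left-full over $A$ and right-full over $J$, the pair $(X,\zeta)$ is an $(A,\delta)-(J,\eta)$ imprimitivity-bimodule coaction. The key point is that its maximalization is $X^m$ itself: the Fischer construction of \defnref{max zeta} only involves the coaction $\zeta$ on $X$ together with the iterated crossed products, and $J\rtimes_\eta G$ is identified with the ideal of $B\rtimes_\epsilon G$ spanned by $\<X\rtimes_\zeta G,X\rtimes_\zeta G\>$, so replacing $B$ by $J$ leaves the output unchanged; concretely, using $\clspn\{\<X^m,X^m\>_{B^m}\}=J^m$ recorded before the lemma, $X^m$ regarded as an $A^m-J^m$ imprimitivity bimodule is the maximalization of the $(A,\delta)-(J,\eta)$ imprimitivity-bimodule coaction $(X,\zeta)$. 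Applying Morita compatibility of $\tau$ to this imprimitivity bimodule gives
\[
\ker q^\tau_A=X^m\dashind_{J^m}^{A^m}\ker q^\tau_J .
\]

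It remains to rewrite the right-hand side over $B^m$. Since $\tau$ has the ideal property and $J$ is strongly $\epsilon$-invariant, \eqref{intersect} (with $J$ in place of $I$) gives $\ker q^\tau_J=J^m\cap\ker q^\tau_B$. On the other hand, regarding $X^m$ as an $A^m-B^m$ correspondence whose right inner products span the ideal $J^m$, we have $X^m\ker q^\tau_B=(X^mJ^m)\ker q^\tau_B=X^m(J^m\cap\ker q^\tau_B)$, so that
\begin{align*}
X^m\dashind_{B^m}^{A^m}\ker q^\tau_B
&=X^m\dashind_{J^m}^{A^m}\bigl(J^m\cap\ker q^\tau_B\bigr)\\
&=X^m\dashind_{J^m}^{A^m}\ker q^\tau_J\\
&=\ker q^\tau_A ,
\end{align*}
which is \eqref{left full}. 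The step I expect to require the most care is the identification of the maximalization of $X$-as-$(A,\delta)-(J,\eta)$-imprimitivity-bimodule with $X^m$ viewed over $J^m$ --- i.e.\ that the Fischer construction is insensitive to restricting the right coefficient algebra to the ideal generated by the inner products; granting that, the remainder is bookkeeping with the Rieffel correspondence together with \eqref{intersect}.
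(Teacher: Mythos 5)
Your proof is correct and follows essentially the same route as the paper's: pass to $J=\clspn\{\<X,X\>_B\}$, apply Morita compatibility to the $(A,\delta)-(J,\eta)$ imprimitivity-bimodule coaction, and use the ideal property to identify $\ker q^\tau_J$ with $J^m\cap\ker q^\tau_B$; your closing computation $X^m\dashind_{B^m}^{A^m}\ker q^\tau_B=X^m\dashind_{J^m}^{A^m}(J^m\cap\ker q^\tau_B)$ is just the paper's induction-in-stages step done by hand, using the identity $X\dashind I=X\dashind(JI)$ already recorded in the preliminaries.
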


\begin{proof}
The condition involving \eqref{left full} of course implies Morita compatibility, so suppose that $\tau$ is Morita compatible and $(X,\zeta)$ is a left-full $(A,\delta)-(B,\epsilon)$ Hilbert-bimodule coaction.

As before, let $J=\clspn\{\<X,X\>_B\}$ with the restricted coaction $\eta=\epsilon|_J$.
Then $(X,\zeta)$ is an $(A,\delta)-(J,\eta)$ imprimitivity-bimodule coaction, so
by Morita compatibility we have
\begin{equation}\label{AXJ}
\ker q^\tau_A=X^m\dashind_{J^m}^{A^m} \ker q^\tau_J.
\end{equation}

Identify $J^m$ with an ideal of $B^m$ in the usual way.
Regarding $B^m$ as a
standard $J^m-B^m$ correspondence,
we have
\begin{equation}\label{JB}
\ker q^\tau_J=J^m\cap \ker q^\tau_B=B^m\dashind_{B^m}^{J^m} \ker q^\tau_B.
\end{equation}
Thus by induction in stages we can combine \eqref{AXJ} and \eqref{JB} to conclude
that
\begin{align*}
\ker q^\tau_A
&=X^m\dashind_{B^m}^{A^m} \ker q^\tau_B.
\qedhere
\end{align*}
\end{proof}

\begin{defn}
We say that a coaction functor $\tau$ has the \emph{correspondence property} if for every $(A,\delta)-(B,\epsilon)$ correspondence coaction $(X,\zeta)$ we have
\[
\ker q^\tau_A\subset X^m\dashind_{B^m}^{A^m} \ker q^\tau_B.
\]
\end{defn}

Note that we have a commutative diagram
\[
\xymatrix{
A^m \ar[r]^-{\varphi_{A^m}} \ar[d]
&\LL(X^m) \ar[d]^{q^\tau_X}
\\
A^m/X^m\dashind \ker q^\tau_B \ar[r]
&\LL(X^\tau),
}
\]
with
\[
X^m\dashind \ker q^\tau_B=\ker (q^\tau_X\circ \varphi_{A^m}).
\]
The composition $q^\tau_X\circ \varphi_{A^m}$ gives $X^\tau$ a left $A^m$-module multiplication,
and
$\tau$ has the correspondence property if and only if
this left $A^m$-module multiplication on $X^\tau$
factors through a left $A^\tau$-module multiplication,
making $(X^\tau,\zeta^\tau)$ into a $(A^\tau,\delta^\tau)-(B^\tau,\epsilon^\tau)$ correspondence coaction.

\begin{ex}\label{max cor}
Trivially the maximalization functor has the correspondence property.
\end{ex}

\begin{thm}\label{cor mor gen}
A coaction functor $\tau$ has the correspondence property if and only if it is 
Morita compatible and
functorial for generalized homomorphisms.
\end{thm}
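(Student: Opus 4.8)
The plan is to rewrite each of the three properties as an inclusion between induced ideals of maximalized correspondences and then shuttle between them. Recall that the correspondence property is exactly the inclusion $\ker q^\tau_A\subset X^m\dashind_{B^m}^{A^m}\ker q^\tau_B$ for every $(A,\delta)-(B,\epsilon)$ correspondence coaction $(X,\zeta)$; Morita compatibility is the same inclusion strengthened to equality, but demanded only for imprimitivity-bimodule coactions; and, unwinding \defnref{gen def}, functoriality for generalized homomorphisms is $\ker q^\tau_A\subset\ker(q^\tau_B\circ\phi^m)$ for every possibly degenerate $\delta-\epsilon$ equivariant $\phi\:A\to M(B)$. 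Two bookkeeping facts will be used repeatedly. First, if $X={}_AB_B$ is the standard correspondence of $\phi$, with the coaction $\epsilon$, then (by the behavior of the Fischer construction on standard correspondences, together with the identification of the left-module homomorphism of $X^m$ as $\varphi_A^m=\phi^m$ recorded in \secref{cor ppy sec}) $X^m$ is the standard $A^m-B^m$ correspondence of $\phi^m$, and hence $X^m\dashind_{B^m}^{A^m}\ker q^\tau_B=\{a\in A^m:\phi^m(a)B^m\subset\ker q^\tau_B\}=\ker(q^\tau_B\circ\phi^m)$. Second, applying the factorization ${}_AX_B\simeq({}_A\KK(X)_{\KK(X)})\otimes_{\KK(X)}({}_{\KK(X)}X_B)$ directly to the $A^m-B^m$ correspondence $X^m$ exhibits $X^m$, via \eqref{K iso}, as $({}_{A^m}K^m_{K^m})\otimes_{K^m}({}_{K^m}X^m_{B^m})$ with $K=\KK(X)$, where the first factor is the standard correspondence of the left action $\varphi_A^m$; combined with Rieffel's induction in stages this lets me peel the left coefficient algebra off an induced ideal.

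For the forward implication I would assume $\tau$ has the correspondence property. Functoriality for generalized homomorphisms is then immediate: apply the defining inclusion to the standard correspondence coaction $({}_AB_B,\epsilon)$ of $\phi$ and invoke the first bookkeeping fact to get $\ker q^\tau_A\subset\ker(q^\tau_B\circ\phi^m)$. For Morita compatibility, let $(X,\zeta)$ be an imprimitivity-bimodule coaction. The defining inclusion gives $\ker q^\tau_A\subset X^m\dashind_{B^m}^{A^m}\ker q^\tau_B$; applying it to the reverse coaction $(X^*,\zeta^*)$ and using the canonical isomorphism $X^{*m}\simeq X^{m*}$ gives $\ker q^\tau_B\subset X^{m*}\dashind_{A^m}^{B^m}\ker q^\tau_A$. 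Inducing this last inclusion through $X^m$ and then using induction in stages together with $X^m\otimes_{B^m}X^{m*}\simeq{}_{A^m}A^m_{A^m}$ and $A^m\dashind_{A^m}^{A^m}\ker q^\tau_A=\ker q^\tau_A$ produces the reverse inclusion $X^m\dashind_{B^m}^{A^m}\ker q^\tau_B\subset\ker q^\tau_A$, so the two inclusions combine into the equality defining Morita compatibility.

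For the reverse implication I would assume $\tau$ is Morita compatible and functorial for generalized homomorphisms; by \propref{gen id} it then has the ideal property, so \lemref{id ppy bimod} is at my disposal. Let $(X,\zeta)$ be an arbitrary $(A,\delta)-(B,\epsilon)$ correspondence coaction, set $K=\KK(X)$ with its induced coaction $\zeta^{(1)}$, and regard $X$ as a left-full $(K,\zeta^{(1)})-(B,\epsilon)$ Hilbert-bimodule coaction (the maximalization $X^m$ and $\KK(X^m)=K^m$ being intrinsic to $(X,\zeta,\epsilon)$). Using the second bookkeeping fact and induction in stages,
\[
X^m\dashind_{B^m}^{A^m}\ker q^\tau_B=K^m\dashind_{K^m}^{A^m}\bigl(X^m\dashind_{B^m}^{K^m}\ker q^\tau_B\bigr),
\]
and by \lemref{id ppy bimod} applied to the left-full Hilbert-bimodule coaction $(X,\zeta)$ over $(K,\zeta^{(1)})$ and $(B,\epsilon)$ the inner induced ideal equals $\ker q^\tau_K$, so the right-hand side is $K^m\dashind_{K^m}^{A^m}\ker q^\tau_K=\ker(q^\tau_K\circ\varphi_A^m)$ (the first factor being the standard correspondence of $\varphi_A^m$). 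Finally $\varphi_A\:A\to M(K)$ is a \emph{possibly degenerate} $\delta-\zeta^{(1)}$ equivariant homomorphism, so functoriality for generalized homomorphisms gives $\ker q^\tau_A\subset\ker(q^\tau_K\circ(\varphi_A)^m)=\ker(q^\tau_K\circ\varphi_A^m)$; chaining the equalities yields $\ker q^\tau_A\subset X^m\dashind_{B^m}^{A^m}\ker q^\tau_B$, i.e.\ the correspondence property.

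I expect the reverse implication to be the real obstacle, for two reasons. First, \lemref{id ppy bimod} is only legitimate once one knows $\tau$ has the ideal property; it is \propref{gen id} that supplies this from the hypothesis of generalized functoriality, and it is precisely here that Morita compatibility by itself would not suffice. Second, the left action $\varphi_A\:A\to M(\KK(X))$ of a correspondence is genuinely degenerate exactly when $X$ is degenerate as a left $A$-module, so the argument really does consume functoriality for the full class of generalized homomorphisms rather than merely the nondegenerate version handled by \lemref{compose nd} --- this is the structural reason the theorem pairs the correspondence property with generalized functoriality. The remaining ingredients --- that maximalization sends standard correspondences to standard correspondences, that each induced ideal is recorded over the correct coefficient algebra, and the elementary identities $X^m\dashind_{B^m}^{A^m}\ker q^\tau_B=\ker(q^\tau_B\circ\phi^m)$ and $X^m\otimes_{B^m}X^{m*}\simeq A^m$ --- should all be routine given the machinery collected in \secref{cor ppy sec}.
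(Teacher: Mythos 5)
Your proposal is correct and follows essentially the same route as the paper's proof: the forward direction uses the standard correspondence of $\phi$ and the reverse bimodule $X^{*m}$ with induction in stages, and the reverse direction factors $X^m$ through $K^m=\KK(X)^m$, applies generalized functoriality to the (possibly degenerate) left action $\varphi_A$, and invokes \propref{gen id} together with \lemref{id ppy bimod} exactly as the paper does. Your closing remarks about why the ideal property and the full generalized (rather than nondegenerate) functoriality are genuinely needed match the structural role these hypotheses play in the paper's argument.
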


\begin{proof}
First assume that $\tau$ has the correspondence property.
For the Morita compatibility, let $(X,\zeta)$ be an $(A,\delta)-(B,\epsilon)$ imprimitivity bimodule coaction.
We must show that
\begin{equation}\label{ibm}
\ker q^\tau_A=X^m-\ind \ker q^\tau_B.
\end{equation}
By the correspondence property the left side is contained in the right side.
Since $(X^*,\zeta^*)$ is a $(B,\epsilon)-(A,\delta)$ imprimitivity bimodule coaction, we also have
\[
\ker q^\tau_B\subset X^{*m}\dashind \ker q^\tau_A.
\]
By induction in stages and the properties of reverse bimodules,
\begin{align*}
\ker q^\tau_A
&\subset X^m\dashind \ker q^\tau_B
\\&\subset X^m\dashind X^{*m}\dashind \ker q^\tau_A
\\&=\ker q^\tau_A,
\end{align*}
so we must have equality throughout,
and in particular \eqref{ibm} holds.

For the functoriality,
let $\phi\:A\to M(B)$ be a $\delta-\epsilon$ equivariant homomorphism.
Then $(B,\epsilon)$ is a standard $(A,\delta)-(B,\epsilon)$ correspondence coaction.
By assumption, we have
$\ker q^\tau_A\subset B^m\dashind \ker q^\tau_B$.
Since 
\[
B^m\dashind \ker q^\tau_B=
\{a\in A^m:\phi^m(a)B^m\subset \ker q^\tau_B\}=\ker(q^\tau_B\circ \phi^m),
\]
$\tau$ is functorial for generalized homomorphisms.

Conversely, assume that $\tau$ is Morita compatible and functorial for generalized homomorphisms.
Let $(X,\zeta)$ be an $(A,\delta)-(B,\epsilon)$ correspondence coaction.
We need to show that
\begin{equation}\label{AB}
\ker q^\tau_A\subset X^m\dashind_{B^m}^{A^m} \ker q^\tau_B.
\end{equation}

Let $K=\KK(X)$, with induced coaction $\mu$.
Let $\varphi_A\:A\to M(K)$ be the left-module homomorphism,
which is $\delta-\mu$ equivariant.
We use the associated $\delta^m-\mu^m$ equivariant homomorphism $\varphi_A^m\:A^m\to M(K^m)$
to regard $(K^m,\zeta^m)$ as a standard $(A^m,\delta^m)-(K^m,\mu^m)$ correspondence coaction.
By functoriality for generalized homomorphisms we have
\begin{equation}\label{AK}
\ker q^\tau_A\subset K^m\dashind_{K^m}^{A^m} \ker q^\tau_K.
\end{equation}

Note that $(X,\zeta)$ may be regarded as a left-full $(K,\mu)-(B,\epsilon)$ Hilbert-bimodule coaction.
Since $\tau$
is functorial for generalized homomorphisms,
by \propref{gen id} it
has the ideal property, so,
since $\tau$ is also assumed to be Morita compatible,
by \lemref{id ppy bimod} we have
\begin{equation}\label{KB}
\ker q^\tau_K=X^m\dashind_{B^m}^{K^m} \ker q^\tau_B.
\end{equation}
By induction in stages we can combine \eqref{AK} and \eqref{KB} to deduce \eqref{AB}.
\end{proof}

\begin{rem}
Although we do not need it
in the current paper,
it is natural to wonder whether
a coaction functor with the correspondence property 
will automatically be
functorial under composition of correspondences.
More precisely,
let $\tau$ be a coaction functor with the correspondence property,
and let $(X,\zeta)$ and $(Y,\eta)$ be
$(A,\delta)-(B,\epsilon)$ and $(B,\epsilon)-(C,\nu)$
correspondence coactions (respectively).
Then the balanced tensor product
$(X\otimes_B Y,\zeta\cotimes\eta)$
is a
$(A,\delta)-(C,\nu)$
correspondence coaction
(see \cite[Proposition~2.13]{enchilada}).
The assumption that $\tau$ has the correspondence property
implies that there are
$(A^\tau,\delta^\tau)-(B^\tau,\epsilon^\tau)$,
$(B^\tau,\epsilon^\tau)-(C^\tau,\nu^\tau)$,
and
$(A^\tau,\delta^\tau)-(C^\tau,\nu^\tau)$
correspondence coactions
$(X^\tau,\zeta^\tau)$,
$(Y^\tau,\eta^\tau)$,
and
$((X\otimes_B Y)^\tau,(\zeta\cotimes\eta)^\tau)$,
respectively.
The functoriality property we are 
wondering about
here is 
whether
there is a natural isomorphism
\[
\bigl((X\otimes_B Y)^\tau,(\zeta\cotimes\eta)^\tau\bigr)
\simeq
(X^\tau\otimes_{B^\tau} Y^\tau,\zeta^\tau\cotimes\eta^\tau)
\]
of
$(A^\tau,\delta^\tau)-(C^\tau,\nu^\tau)$
correspondence coactions.
It seems plausible that
this 
could
be checked via a 
tedious diagram chase,
or via linking algebras.
\end{rem}

\begin{ex}
Combining
\exref{klq gen},
\exref{klq mor}, and
\thmref{cor mor gen},
we see that $\tau_E$ has the correspondence property for every large ideal $E$ of $B(G)$.
\end{ex}

\begin{rem}
\thmref{cor mor gen} is similar to the equivalence (2)$\iff$(3) in \cite[Theorem~4.9]{bew}, except that, as we mentioned in \remref{gen iff id}, we have not been able to prove that for coaction functors the ideal property is equivalent to functoriality for generalized homomorphisms.
\end{rem}

\begin{rem}
\cite[Theorem~5.6]{bew} shows that every correspondence crossed-product functor produces $C^*$-algebras carrying a quotient of the dual coaction on the full crossed product.
This
reinforces our belief in the importance of 
studying
crossed-product functors
arising from coaction functors composed with the full cross product.
\end{rem}

\begin{cor}
Let $\TT$ be a nonempty family of coaction functors.
If every functor in $\TT$ has the correspondence property, then so does $\glb\TT$.
In particular, there is a smallest coaction functor with the correspondence property.
\end{cor}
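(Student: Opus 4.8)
The plan is to deduce this immediately from \thmref{cor mor gen} together with the two greatest-lower-bound results already in hand. First I would recall that, by \thmref{cor mor gen}, a coaction functor has the correspondence property precisely when it is both Morita compatible and functorial for generalized homomorphisms. Hence, if every $\tau\in\TT$ has the correspondence property, then every $\tau\in\TT$ is simultaneously Morita compatible and functorial for generalized homomorphisms.

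Next I would invoke \propref{glb exact or morita}, which yields that $\glb\TT$ is Morita compatible, and \propref{glb gen}, which yields that $\glb\TT$ is functorial for generalized homomorphisms. (Here one implicitly uses that $\glb\TT$ is itself a coaction functor, which is \cite[Theorem~4.9]{klqfunctor} as recalled in \secref{prelim}, so that it makes sense to speak of these properties for it.) A final application of \thmref{cor mor gen} in the other direction then shows that $\glb\TT$ has the correspondence property.

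For the ``in particular'' clause, I would let $\TT$ be the family of \emph{all} coaction functors with the correspondence property; this family is nonempty because it contains the maximalization functor by \exref{max cor}. By the first part, $\glb\TT$ has the correspondence property, so $\glb\TT\in\TT$, and by the characterization of the greatest lower bound we have $\glb\TT\le\tau$ for every $\tau\in\TT$; thus $\glb\TT$ is the smallest coaction functor with the correspondence property.

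I do not anticipate any genuine obstacle: all the real content has been absorbed into \thmref{cor mor gen}, \propref{glb exact or morita}, and \propref{glb gen}, and the argument is a short three-step assembly of these. The only point meriting an explicit word is the observation just noted, that the greatest lower bound of a family of coaction functors is again a coaction functor, so that the statement of the corollary is even meaningful.
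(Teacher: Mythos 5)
Your proposal is correct and is exactly the argument the paper intends (the corollary is stated without proof precisely because it follows by combining \thmref{cor mor gen} with \propref{glb exact or morita} and \propref{glb gen}, and the ``in particular'' clause follows by taking the greatest lower bound of the nonempty family of all such functors, as you do). No gaps.
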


Not surprisingly, the correspondence property is simpler for decreasing functors:

\begin{lem}\label{dec cor}
A decreasing coaction functor $\tau$ has the correspondence property if and only if for every $(A,\delta)-(B,\epsilon)$ correspondence coaction $(X,\zeta)$ we have
\[
\ker Q^\tau_A\subset X\dashind_B^A \ker Q^\tau_B.
\]
\end{lem}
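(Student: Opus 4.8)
The plan is to reduce the stated condition to the correspondence property by replacing $q^\tau$ with $Q^\tau$ throughout, exactly as in the proof of \lemref{dec gen} and the decreasing version of Morita compatibility in \cite[Proposition~5.5]{klqfunctor}. Since $\tau$ is decreasing we have $q^\tau_A=Q^\tau_A\circ\psi_A$ and $q^\tau_B=Q^\tau_B\circ\psi_B$, where $\psi_A\:A^m\to A$ and $\psi_B\:B^m\to B$ are the maximalization surjections. Applying the three steps of the Fischer construction to the correspondence coaction $(X,\zeta)$ of \defnref{max zeta}, in the same way that $\psi_A$ is obtained for algebras (this is the machinery of \cite{koqstable} and \cite{koqmaximal} already used silently above, e.g.\ for the identity $\clspn\{\<X^m,X^m\>_{B^m}\}=J^m$), yields a surjective $\psi_A-\psi_B$ compatible correspondence homomorphism $\psi_X\:X^m\to X$. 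Because $\ker\psi_B\subset\ker q^\tau_B$, the standard Hilbert-module fact $\ker\psi_X=X^m\ker\psi_B$ gives $\ker\psi_X\subset X^m\ker q^\tau_B=\ker q^\tau_X$, so $\psi_X$ descends to a surjection $Q^\tau_X\:X\to X^\tau$ with $q^\tau_X=Q^\tau_X\circ\psi_X$; a quick chase using surjectivity of $\psi_X$ and $\psi_B$ then gives $\ker Q^\tau_X=X\ker Q^\tau_B$.

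Next I would compute the induced ideal. By the discussion following the definition of the correspondence property, $X^m\dashind_{B^m}^{A^m}\ker q^\tau_B=\ker(q^\tau_X\circ\varphi_{A^m})$. For $a\in A^m$ and $x\in X^m$,
\[
q^\tau_X(\varphi_{A^m}(a)x)=q^\tau_X(a\cdot x)=Q^\tau_X\bigl(\psi_X(a\cdot x)\bigr)=Q^\tau_X\bigl(\psi_A(a)\cdot\psi_X(x)\bigr),
\]
using that $\psi_X$ is a $\psi_A$-compatible correspondence homomorphism. Since $\psi_X$ maps $X^m$ onto $X$, it follows that $a\in X^m\dashind_{B^m}^{A^m}\ker q^\tau_B$ if and only if $\psi_A(a)\cdot y\in\ker Q^\tau_X=X\ker Q^\tau_B$ for every $y\in X$, i.e.\ if and only if $\psi_A(a)\in X\dashind_B^A\ker Q^\tau_B$. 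Hence
\[
X^m\dashind_{B^m}^{A^m}\ker q^\tau_B=\psi_A^{-1}\bigl(X\dashind_B^A\ker Q^\tau_B\bigr).
\]

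Finally, since $q^\tau_A=Q^\tau_A\circ\psi_A$ gives $\ker q^\tau_A=\psi_A^{-1}(\ker Q^\tau_A)$, the correspondence-property inclusion $\ker q^\tau_A\subset X^m\dashind_{B^m}^{A^m}\ker q^\tau_B$ is equivalent to $\psi_A^{-1}(\ker Q^\tau_A)\subset\psi_A^{-1}\bigl(X\dashind_B^A\ker Q^\tau_B\bigr)$, and applying the surjection $\psi_A$ this is in turn equivalent to $\ker Q^\tau_A\subset X\dashind_B^A\ker Q^\tau_B$, as wanted. The one step that is not pure bookkeeping is the construction and basic properties of the correspondence-level maximalization map $\psi_X\:X^m\to X$ --- existence, surjectivity, and $\psi_A-\psi_B$ compatibility --- but this is obtained exactly as $\psi_A$ is, by tracking the three Fischer steps; I expect it to be routine, so the honest work is just keeping straight which maps are surjective.
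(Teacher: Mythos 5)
Your proof is correct and takes essentially the same route as the paper's: both arguments rest on identifying $X$ with the quotient $X^m/X^m\ker\psi_B$ via a surjective $\psi_A-\psi_B$ compatible correspondence homomorphism and then translating kernels and induced ideals through $\psi_A$. The only difference is presentational --- the paper packages your direct computation of $X^m\dashind_{B^m}^{A^m}\ker q^\tau_B=\psi_A^{-1}\bigl(X\dashind_B^A\ker Q^\tau_B\bigr)$ into a separate elementary lemma (\lemref{ind}) on induced ideals of quotient correspondences.
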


\begin{proof}
We must show that the stated condition involving $Q^\tau_A$
holds if and only if
$\ker q^\tau_A\subset X^m\dashind_{B^m}^{A^m} \ker q^\tau_B$.
Let
\begin{align*}
&I=\ker \psi_A&&J=\ker \psi_B
\\
&K=\ker q^\tau_A&&L=\ker q^\tau_B.
\end{align*}
Then
$I\subset K\cap X^m\dashind J$,
$I\subset K$,
and
$J\subset L$,
and we can identify
$A$ with $A^m/I$,
$\ker Q^\tau_A$ with $K/I$,
$X$ with $X^m/X^mJ$,
$B$ with $B^m/J$
and
$\ker Q^\tau_B$ with $L/J$,
so the desired equivalence follows from the general \lemref{ind} below.
\end{proof}

In the proof of \lemref{dec cor} we appealed to the following elementary lemma, which is probably folklore.

\begin{lem}\label{ind}
Let $X$ be an $A-B$ correspondence,
let $I\subset K$ be ideals of $A$, and let $J\subset L$ be ideals of $B$.
Suppose that $I\subset X\dashind J$,
so that $X/XJ$ is an $(A/I)-(B/J)$ correspondence.
Then
$K\subset X\dashind L$
if and only if
$K/I\subset (X/XJ)\dashind L/J$.
\end{lem}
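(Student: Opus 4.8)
The plan is to unwind the definition of induced ideals on both sides and show the two containments are literally the same condition, read at the level of $A$ versus $A/I$. Recall $X\dashind L=\{a\in A:aX\subset XL\}$, and similarly $(X/XJ)\dashind(L/J)=\{\bar a\in A/I:\bar a(X/XJ)\subset (X/XJ)(L/J)\}$, where $\bar a$ denotes the image of $a\in A$ under the quotient map $A\to A/I$. The key observation is that, under the identification of $X/XJ$ as an $(A/I)-(B/J)$ correspondence, the left action of $\bar a$ on $\bar x=x+XJ$ is $\overline{ax}=ax+XJ$, and the submodule $(X/XJ)(L/J)$ is exactly $(XL+XJ)/XJ$, because $\bar x\cdot\bar\ell$ ranges over $x\ell+XJ$ as $x\in X,\ell\in L$. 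Since $J\subset L$ we have $XJ\subset XL$, so $(XL+XJ)/XJ=XL/XJ$; thus the condition ``$\bar a(X/XJ)\subset (X/XJ)(L/J)$'' says precisely ``$aX+XJ\subset XL$'', i.e. ``$aX\subset XL$'' (again using $XJ\subset XL$). In other words, $\bar a\in(X/XJ)\dashind(L/J)$ if and only if $aX\subset XL$, if and only if $a\in X\dashind L$.

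From here the proof is a two-line set-theoretic argument. First I would note that by the hypothesis $I\subset X\dashind L$ (which follows from $I\subset X\dashind J\subset X\dashind L$, the latter inclusion because $J\subset L$), the set $X\dashind L$ is a union of cosets of $I$, so ``$a\in X\dashind L$'' depends only on $\bar a$ and the preimage of $(X/XJ)\dashind(L/J)$ under $A\to A/I$ is exactly $X\dashind L$. Therefore $K\subset X\dashind L$ holds iff the image $K/I$ of $K$ lies in the image of $X\dashind L$, which by the previous paragraph is $(X/XJ)\dashind(L/J)$. That gives the ``if and only if''. One should also double-check the standing hypothesis of the lemma is actually used consistently: $I\subset X\dashind J$ is exactly what makes $X/XJ$ a genuine $(A/I)-(B/J)$ correspondence (the left $A$-action descends), and it is invoked again to see $I\subset X\dashind L$; I would state this inclusion explicitly at the start.

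I do not expect a serious obstacle here — this is bookkeeping with quotients of correspondences. The one place requiring a little care is the identification $(X/XJ)(L/J)=XL/XJ$: one must check that $XL\supset XJ$ (clear from $J\subset L$), and that the closed span of products $\{x\ell+XJ:x\in X,\ell\in L\}$ really is all of $XL/XJ$ and not something smaller — this is immediate since $XL$ is by definition $\clspn\{x\ell\}$ and the quotient map $X\to X/XJ$ is continuous and open onto its image, so it carries $\clspn\{x\ell\}$ onto $\clspn\{x\ell+XJ\}$. The rest is formal. I would write the proof as: (1) observe $I\subset X\dashind L$; (2) compute $(X/XJ)\dashind(L/J)$ as the image of $X\dashind L$ in $A/I$ via the coset identification above; (3) conclude the equivalence for $K$.

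\begin{proof}
Write $\pi\:A\to A/I$ and $\kappa\:X\to X/XJ$ for the quotient maps, so that $\kappa$ is $\pi$-linear on the left and linear on the right over $B\to B/J$. Since $J\subset L$ we have $XJ\subset XL$, and since $I\subset X\dashind J$ we get $I\subset X\dashind L$ as well.

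We first claim that, under the quotient map $\pi$, the ideal $X\dashind L$ of $A$ corresponds exactly to the ideal $(X/XJ)\dashind (L/J)$ of $A/I$; that is,
\[
\pi^{-1}\bigl((X/XJ)\dashind (L/J)\bigr)=X\dashind L.
\]
Indeed, for $a\in A$ the element $\pi(a)$ acts on $X/XJ$ by $\pi(a)\kappa(x)=\kappa(ax)$, and the submodule $(X/XJ)(L/J)$ is the closed span of the elements $\kappa(x)(\ell+J)=\kappa(x\ell)$ for $x\in X$, $\ell\in L$, which is $\kappa(XL)=(XL+XJ)/XJ=XL/XJ$ (using $XJ\subset XL$). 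Hence $\pi(a)(X/XJ)\subset (X/XJ)(L/J)$ if and only if $aX+XJ\subset XL$, i.e.\ (again using $XJ\subset XL$) if and only if $aX\subset XL$, i.e.\ $a\in X\dashind L$. This proves the claim.

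Now, since $I\subset X\dashind L$, the ideal $X\dashind L$ is a union of $I$-cosets, so $K\subset X\dashind L$ if and only if $\pi(K)\subset \pi(X\dashind L)$. By the claim, $\pi(X\dashind L)=(X/XJ)\dashind (L/J)$, and $\pi(K)=K/I$. Therefore $K\subset X\dashind L$ if and only if $K/I\subset (X/XJ)\dashind (L/J)$, as desired.
\end{proof}
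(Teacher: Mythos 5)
Your proof is correct and follows essentially the same route as the paper's: both arguments reduce to the identification $(X/XJ)(L/J)=\kappa(XL)$ together with the observation that $XL$ is a closed submodule containing $\ker\kappa=XJ$, so that passing to preimages recovers $XL$ exactly. The only difference is packaging --- you establish the single identity $\pi^{-1}\bigl((X/XJ)\dashind (L/J)\bigr)=X\dashind L$ and deduce both implications at once, whereas the paper runs the forward direction by pushing forward and the converse by pulling back through $\psi^{-1}$.
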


\begin{proof}
Let
\begin{align*}
\phi&\:A\to A/I
\\
\psi&\:X\to X/XJ
\\
\rho&\:B\to B/J
\end{align*}
be the quotient maps.
First assume that $K\subset X\dashind L$.
Then
\begin{align*}
(K/I)(X/XJ)
&=\phi(K)\psi(X)
\\&=\psi(KX)
\\&\subset \psi(XL)
\\&=\psi(X)\rho(L)
\\&=(X/XJ)(L/J),
\end{align*}
so $K/I\subset (X/XJ)\dashind L/J$.

Conversely, assume that
$K/I\subset (X/XJ)\dashind L/J$.
Then
\begin{align*}
KX
&\subset\psi\inv(\psi(KX))
\\&=\psi\inv\bigl(\phi(K)\psi(X)\bigr)
\\&\subset \psi\inv\bigl(\psi(X)\rho(L)\bigr)
\\&\overset{*}{=}\psi\inv(\psi(XL))
\\&=XL,
\end{align*}
where the equality at * holds since $\psi$ is a surjective homomorphism of correspondences and $XL$ is a closed subcorrespondence containing $\ker \psi=KJ$.
\end{proof}

%\bibliographystyle{amsalpha}
%\bibliography{cstar}

\providecommand{\bysame}{\leavevmode\hbox to3em{\hrulefill}\thinspace}
\providecommand{\MR}{\relax\ifhmode\unskip\space\fi MR }
% \MRhref is called by the amsart/book/proc definition of \MR.
\providecommand{\MRhref}[2]{%
  \href{http://www.ams.org/mathscinet-getitem?mr=#1}{#2}
}
\providecommand{\href}[2]{#2}

\end{document}